\definecolor{mygray}{gray}{0.6}
\normalfont\fontsize{12}{15}\bfseries}{\thesection}{1em.}{}
\newtheorem{proposition}{Proposition}[section]
\newtheorem{lemma}{Lemma}[section]
\newtheorem{theorem}{Theorem}[section]
\let\oldbibliography\thebibliography
\renewcommand{\thebibliography}[1]{%
  \oldbibliography{#1}%
  \setlength{\itemsep}{-2pt}%
}
\begin{document}

\baselineskip=0.20in

\makebox[\textwidth]{%
\hglue-15pt
\begin{minipage}{0.6cm}	
\vskip9pt
\end{minipage} \vspace{-\parskip}
\hfill
}
\vskip36pt

\noindent
{\large \bf Avoidance of vincular patterns by Catalan words}\\

\noindent
Toufik Mansour and Mark Shattuck\\

\noindent
\footnotesize {\it Department of Mathematics, University of Haifa, 3498838 Haifa, Israel\\
Email: tmansour@univ.haifa.ac.il}\\

\noindent
\footnotesize {\it Department of Mathematics, University of Tennessee,
37996 Knoxville, TN, USA\\
Email: mshattuc@utk.edu}\\

\noindent
(\footnotesize Received: Day Month 201X. Received in revised form: Day Month 201X. Accepted: Day Month 201X. Published online: Day Month 201X.)\\

\setcounter{page}{1} \thispagestyle{empty}

\baselineskip=0.20in

\normalsize

\begin{abstract}
Let $\mathcal{C}_n$ denote the set of words $w=w_1\cdots w_n$ on the alphabet of positive integers satisfying $w_{i+1}\leq w_i+1$ for $1 \leq i \leq n-1$ with $w_1=1$.  The members of $\mathcal{C}_n$ are known as Catalan words and are enumerated by the $n$-th Catalan number $C_n$.  The problem of finding the cardinality of various avoidance classes of $\mathcal{C}_n$ has been an ongoing object of study, and members of $\mathcal{C}_n$ avoiding one or two classical or a single consecutive pattern have been enumerated.  In this paper, we extend these results to vincular patterns and seek to determine the cardinality of each avoidance class corresponding to a pattern of type (1,2) or (2,1).  In several instances, a simple explicit formula for this cardinality may be given.  In the more difficult cases, we find only a formula for the (ordinary) generating function which enumerates the class in question.  We make extensive use of functional equations in establishing our generating function results. \medskip

\noindent\emph{Keywords:} Catalan word, vincular pattern, functional equation, pattern avoidance, kernel method\medskip

\noindent\emph{2020 MSC:}  05A15, 05A05

\end{abstract}

\section{Introduction}

Let $\tau=\tau_1\cdots \tau_m$ denote a sequence in $[\ell]=\{1,\ldots,\ell\}$ for some $\ell\geq1$ that contains each letter in $[\ell]$ at least once and let $\pi=\pi_1\cdots\pi_n$ be a positive integral sequence, where $n \geq m \geq 1$. Then $\pi$ is said to \emph{contain} $\tau$ if there exist indices $1 \leq i_1<\cdots<i_m\leq n$ such that $\pi_{i_j} \,x\, \pi_{i_k}$ if and only if $\tau_j \,x\,\tau_k$ for all $j,k \in [m]$ and each $x \in\{<,>,=\}$,  That is, $\pi$ contains $\tau$ if there exists a subsequence of $\pi$ that is order-isomorphic to $\tau$ and is said to \emph{avoid} $\tau$ otherwise.  In this context, $\tau$ is often referred to as a \emph{pattern}.  The pattern avoidance problem is one that has been studied extensively in enumerative combinatorics on a variety of discrete structures, starting with permutations.

The concept of pattern avoidance can be generalized as follows.  Suppose one decomposes the pattern $\tau$ into nonempty sections as $\tau=\tau^{(1)}\cdots \tau^{(k)}$ for some $k \geq 1$. Then $\pi$ contains an occurrence of $\tau=\tau^{(1)}\text{-}\cdots\text{-}\tau^{(k)}$ as a \emph{vincular} (or \emph{dashed}) pattern if there exists a subsequence $\alpha$ of $\pi$ that is order-isomorphic to $\tau$ such that, for each $p\in [m-1]$, if $\tau_p$ and $\tau_{p+1}$ both belong to the same section $\tau^{(i)}$ of $\tau$ for some $i \in [k]$, then the $p$-th and $(p+1)$-st entries of $\alpha$ correspond to adjacent letters of $\pi$.  That is, letters within the groups separated by dashes must be adjacent within an occurrence of $\tau$ in $\pi$. If $a_i=|\tau^{(i)}|$ for $1 \leq i \leq k$, then the vincular pattern $\tau=\tau^{(1)}\text{-}\cdots\text{-}\tau^{(k)}$ is said to be of \emph{type} $(a_1,\ldots,a_k)$.  We refer the reader to the authoritative text by Kitaev \cite[Chapter\,7]{Kit} for a full discussion of these and other kinds of patterns.

For example, the sequence $\pi=123411232$ contains one occurrence of the pattern $\tau=12\text{-}3\text{-}21$ of type (2,1,2), as witnessed by the subsequence 23432.  On the other hand, $\pi$ avoids the pattern $13\text{-}2$ of type (2,1), though it is seen to contain subsequences isomorphic to 132.  Note that the cases in which each $\tau^{(i)}$ contains a single letter or in which $k=1$ correspond to what are known as the \emph{classical} and \emph{consecutive} patterns, respectively.  Thus, any isomorphic subsequence of $\pi$ gives rise to an occurrence of a classical pattern, whereas for a consecutive pattern (sometimes called a \emph{subword}), the entries must correspond to a string of adjacent letters of $\pi$ and therefore it imposes the least restriction with regard to the avoidance of which.

A \emph{Catalan} word $\pi=\pi_1\cdots \pi_n$ is a sequence of positive integers satisfying $\pi_{i+1}\leq \pi_i+1$ for $1 \leq i \leq n-1$, with $\pi_1=1$. Let $\mathcal{C}_n$ denote the set of Catalan words of length $n$. For example, if $n=4$, then
$$\mathcal{C}_4=\{1111,1121,1211,1221,1231,1112,1122,1212,1222,1232,1123,1223,1233,1234\}.$$
It is well known that $|\mathcal{C}_n|=C_n$ for all $n\geq0$, where $C_n=\frac{1}{n+1}\binom{2n}{n}$ is the $n$-th Catalan number; see, e.g., \cite[Exercise~80]{Stan}. This fact may be realized quickly as follows.  Let $\mathcal{D}_n$ denote the set of \emph{Dyck} paths of semi-length $n$, i.e., lattice paths that use $u=(1,1)$ and $d=(1,-1)$ steps going from $(0,0)$ to $(2n,0)$ and never dipping below the $x$-axis, which is a fundamental structure enumerated by $C_n$. Given $\pi=\pi_1\cdots \pi_n \in \mathcal{C}_n$, let $\imath(\pi)$ denote the member of $\mathcal{D}_n$ whose $j$-th $u$ from the left terminates at height $\pi_j$ for each $j \in [n]$.  For example, if $\pi=12342332 \in \mathcal{C}_8$, then $\imath(\pi)=u^4d^3u^2dud^2ud^2\in \mathcal{D}_8$.   It is apparent that $\imath$ provides a bijection between $\mathcal{A}_n$ and $\mathcal{D}_n$ such that the final letter of $\pi$ equals the number of $d$'s within the terminal run of $d$'s in $\imath(\pi)$ for all $\pi$. Thus, the results below counting certain restricted subsets of $\mathcal{C}_n$ may also be viewed equivalently as enumerative formulas for the corresponding subsets of $\mathcal{D}_n$ under $\imath$.

The study of pattern avoidance in Catalan words was initiated by Baril et al. in \cite{BKV2}, where every case of a single three-letter classical pattern was treated and the descent distribution on each corresponding avoidance class was found. These results were later extended to two classical patterns (see \cite{BKV} or \cite[Chapter\,3]{Kh}), subwords \cite{RRO} and partial order patterns \cite{BRam}.  Moreover, the distributions of several parameters have also been considered on Catalan words, represented geometrically such as bargraphs, such as  semi-perimeter \cite{CMR,MSh2}, area \cite{CMR,MSh2} and the number of interior lattice points \cite{MRT}.  Further, in \cite{Sha}, the joint distributions of one or more subwords on $\mathcal{C}_n$ were studied and explicit formulas for the generating functions were found, and in \cite{MVaj}, Catalan words arose in connection with the exhaustive generation of Gray codes for growth-restricted sequences.

In this paper, we consider the problem of enumerating the members of $\mathcal{C}_n$ avoiding a single vincular pattern of length three, in response to a general question raised by Baril et al. in \cite{BKV2} concerning the vincular pattern avoidance problem on $\mathcal{C}_n$ and, in particular, the case of three-letter patterns.  Let $\mathcal{C}_n(\tau)$ denote the subset of $\mathcal{C}_n$ for $n \geq1$  whose members avoid the pattern $\tau$ and let $c_n(\tau)=|\mathcal{C}_n(\tau)|$.  Here, we determine an explicit formula for $c_n(\tau)$ or its generating function $\sum_{n\geq1}c_n(\tau)t^n$ for each vincular pattern of the form $x\text{-}yz$ or $xy\text{-}z$, answering the question raised in \cite{BKV2}.  We remark that the Catalan words treated in \cite{MSh1} correspond to a different class of non-negative integral sequences, which are in fact enumerated by $C_{n-1}$, and that are characterized by their satisfying the growth requirement $\pi_{i+1} \geq \pi_i-1$ for $1 \leq i <n$ together with the condition that the leftmost occurrence of each positive letter $k$ has at least one $k-1$ occurring somewhere both to its left and its right.

The organization of this paper is as follows.  In the next section, we enumerate the members of $\mathcal{C}_n(\tau)$, where $\tau$ is a pattern of the type (1,2).  Our work is shortened somewhat in this regard by noting that several cases of avoiding (1,2) are logically equivalent on $\mathcal{C}_n$ to the avoidance of the corresponding classical pattern of length three obtained by ignoring the adjacency requirement, and hence the results in these cases follow from those in \cite{BKV2}.  Direct combinatorial proofs are given for the patterns 1-22, 1-32 and 2-31 in subsection 2.1, with $c_n(\tau)$ in these cases corresponding to well-known sequences from the OEIS \cite{Sloane}.  The remaining patterns in (1,2) are apparently more difficult and we only determine a formula for the generating function $\sum_{n\geq1}c_n(\tau)t^n$ in each case.  For the patterns 2-21 and 3-21 considered in 2.2, we make extensive use of functional equations to obtain our results and employ various techniques such as iteration and the \emph{kernel} method (see, e.g., \cite{HM}).  By contrast, for the cases 1-11 and 2-11 treated in 2.3, we utilize the \emph{symbolic} method (see, e.g., \cite{FS}) and are able to find expressions for the generating functions in terms of Chebyshev polynomials.

A similar treatment is afforded the patterns of type (2,1) in the third section.  For the cases 22-1 and 32-1 in subsection 3.1, a direct enumeration may be given.  For the others, we make use of functional equations to obtain the generating function formulas.  In the case of 21-1 covered in 3.5, an elegant formula in terms of an infinite continued fraction arises somewhat unexpectedly.  For several cases of either type (1,2) or (2,1), we must refine the counting sequence $c_n(\tau)$ by considering one or more parameters (specific to the avoidance class at hand) on $\mathcal{C}_n(\tau)$ so as to be able to write a recurrence that enumerates the class.  A variety of such parameters are utilized in this way, including those tracking the largest letter, last letter, number of $1$'s, smallest descent bottom and position of the first level.

The results of this paper are summarized in Tables \ref{tab1} and \ref{tab2} below.  Note that every Catalan word avoids the patterns 2-13 and 13-2 by virtue of the condition $\pi_{i+1}\leq \pi_i+1$ for all $i$, and hence these cases are trivial.

\begin{table}[htp]
\begin{center}
\begin{tabular}{|l|l|l|}\hline
  $\tau$ & $\{c_n(\tau)\}_{n=1}^{10}$ & Reference\\\hline\hline
$1\text{-}12$, $1\text{-}21$, $1\text{-}23$&1,2,4,8,16,32,64,128,256,512&Proposition~\ref{(1,2)prop}\\\hline
$1\text{-}22$&1,2,4,9,21,51,127,323,835,2188&Theorem~\ref{RcasesTh2}\\\hline
$1\text{-}11$&1,2,4,10,25,66,179,495,1390,3951&Theorem~\ref{1-11th1}\\\hline
$1\text{-}32$, $2\text{-}12$&1,2,5,13,34,89,233,610,1597,4181&Theorem~\ref{RcasesTh1} and Proposition~\ref{(1,2)prop}\\\hline
$2\text{-}21$&1,2,5,13,34,90,240,643,1728,4654&Theorem~\ref{th2-21}\\\hline
$2\text{-}31$&1,2,5,13,35,96,267,750,2123,6046&Theorem~\ref{RcasesTh2}\\\hline
$2\text{-}11$&1,2,5,13,36,103,302,901,2724,8321&Theorem~\ref{2-11th}\\\hline
$3\text{-}12$&1,2,5,14,41,122,365,1094,3281,9842&Proposition~\ref{(1,2)prop}\\\hline
$3\text{-}21$&1,2,5,14,41,122,366,1104,3344,10162&Theorem~\ref{th3-21}\\\hline
$2\text{-}13$&1,2,5,14,42,132,429,1430,4862,16796&Trivial\\\hline
\end{tabular}
\end{center}
\caption{Number of Catalan words of length $n=1,2,\ldots,10$ that avoid a pattern of form $x\text{-}yz$}\label{tab1}
\end{table}
\begin{table}[htp]
\begin{center}
\begin{tabular}{|l|l|l|}\hline
  $\tau$ & $\{c_n(\tau)\}_{n=1}^{10}$&Reference \\\hline\hline
$12\text{-}2$&1,2,4,7,11,16,22,29,37,46&Proposition~\ref{(2,1)prop}\\\hline
$12\text{-}1$, $12\text{-}3$&1,2,4,8,16,32,64,128,256,512&Proposition~\ref{(2,1)prop}\\\hline
$11\text{-}2$&1,2,4,9,22,56,148,400,1102,3079&Theorem~\ref{11-2th}\\\hline
$11\text{-}1$&1,2,4,10,25,66,179,495,1390,3951&Proposition~\ref{11-1prop}\\\hline
$23\text{-}1$&1,2,5,13,34,89,233,610,1597,4181&Proposition~\ref{(2,1)prop}\\\hline
$21\text{-}1$&1,2,5,13,35,96,267,750,2122,6036&Theorem~\ref{21-1th}\\\hline
$22\text{-}1$&1,2,5,13,35,96,267,750,2123,6046&Theorem~\ref{22-1/32-1}\\\hline
$21\text{-}2$&1,2,5,13,35,97,274,784,2265,6593&Theorem~\ref{21-2thm}\\\hline
$32\text{-}1$&1,2,5,14,41,122,365,1094,3281,9842&Theorem~\ref{22-1/32-1}\\\hline
$31\text{-}2$&1,2,5,14,41,123,375,1157,3602,11291&Theorem~\ref{31-2th}\\\hline
$21\text{-}3$&1,2,5,14,41,123,375,1157,3603,11303&Theorem~\ref{21-3thm}\\\hline
$13\text{-}2$&1,2,5,14,42,132,429,1430,4862,16796&Trivial\\\hline
\end{tabular}
\end{center}
\caption{Number of Catalan words of length $n=1,2,\ldots,10$ that avoid a pattern of form $xy\text{-}z$}\label{tab2}
\end{table}

\section{Patterns of the type $(1,2)$}

In this section, we seek to determine the sequences $c_n(\tau)$, where $\tau$ is a vincular pattern of the form (1,2), either explicitly or in terms of generating functions.  Our work in this regard is shortened by observing that several of the (1,2) patterns are logically equivalent for Catalan words to the classical pattern of the same length obtained by removing the adjacency requirement of the last two letters within an occurrence.

For example, we have $c_n(3\text{-}12)=c_n(3\text{-}1\text{-}2)$ for all $n \geq 1$. To realize this, suppose $\pi=\pi_1\cdots\pi_n \in \mathcal{C}_n$ contains an occurrence of $3\text{-}1\text{-}2$ as witnessed by the subsequence $\pi_i=a, \pi_j=b, \pi_k=c$, where $b<c<a$.  We show that $\pi$ must contain an occurrence of $3\text{-}12$.  To do so, let $r_0$ be the smallest index $r>j$ such that $\pi_r=c$.  Then $\pi_k=c$ with $k>j$ implies $r_0$ exists with $j<r_0\leq k$.  We consider now cases on $\pi_{r_0-1}$ and first suppose $\pi_{r_0-1}>c$.  Note that $\pi$ being a Catalan word implies it can have only unit increases between adjacent elements, and thus $\pi_j=b<c$ implies there must exist some index $r \in[j+1,r_0-2]$ such that $\pi_r=c$.  But this contradicts the minimality of $r_0$, so we must have $\pi_{r_0-1}<c$, i.e, $\pi_{r_0-1}=c-1$ (note $\pi_{r_0-1}\neq c$, again by the minimality).  But then $\pi$ witnesses an occurrence of $3\text{-}12$ with the subsequence $\pi_i=a,\pi_{r_0-1}=c-1,\pi_{r_0}=c$, as desired.  This implies that the patterns $3\text{-}12$ and $3\text{-}1\text{-}2$ are equivalent on $\mathcal{C}_n$, as claimed.

By comparable reasoning, one has that the vincular patterns $1\text{-}12$, $1\text{-}21$, $1\text{-}23$ and $2\text{-}12$ are equivalent to the analogous classical patterns.  Let $F_n=F_{n-1}+F_{n-2}$ for $n \geq 2$ denote the $n$-th Fibonacci number, with $F_0=0$ and $F_1=1$; see, e.g.,  \cite[A000045]{Sloane} or \cite{Vaj}.  By the results from \cite{BKV2}, we then have the following.

\begin{proposition}\label{(1,2)prop}
If $n\geq1$, then $c_n(1\text{-}12)=c_n(1\text{-}21)=c_n(1\text{-}23)=2^{n-1}$, $c_n(2\text{-}12)=F_{2n-1}$ and $c_n(3\text{-}12)=\frac{3^{n-1}+1}{2}$.
\end{proposition}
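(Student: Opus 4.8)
The plan is to derive the proposition from the classical pattern enumeration in \cite{BKV2}, once we know that each vincular pattern here is equivalent on $\mathcal{C}_n$ to the classical pattern obtained by erasing its dash. The equivalence $c_n(3\text{-}12)=c_n(3\text{-}1\text{-}2)$ was established above, so it remains to handle $1\text{-}12$, $1\text{-}21$, $1\text{-}23$ and $2\text{-}12$. Since any occurrence of a vincular pattern is in particular an occurrence of the underlying classical pattern, in each case it is enough to show that a Catalan word containing the classical pattern also contains the vincular one, which I would do by the same sliding argument used in the text for $3\text{-}12$.

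For $1\text{-}12$, $1\text{-}23$ and $2\text{-}12$ — patterns whose last letter strictly exceeds the middle one — suppose $\pi\in\mathcal{C}_n$ has a classical occurrence $\pi_i,\pi_j,\pi_k$ with $i<j<k$, the entry $\pi_j$ playing the role of the middle letter. Take $r_0$ to be the least index $r>j$ with $\pi_r>\pi_j$ (for $2\text{-}12$, the least $r>j$ with $\pi_r=\pi_k$ instead); then $j<r_0\le k$, and because a Catalan word increases by at most one per step one checks that $\pi_{r_0-1}=\pi_j$ and $\pi_{r_0}=\pi_j+1$ (respectively $\pi_{r_0-1}=\pi_k-1$, $\pi_{r_0}=\pi_k$). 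Since $i<j\le r_0-1$ and $\pi_i$ retains the required order relation to $\pi_{r_0-1}$ and $\pi_{r_0}$, the triple $\pi_i,\pi_{r_0-1},\pi_{r_0}$ realizes the vincular pattern. The pattern $1\text{-}21$ is treated most transparently via the remark that a Catalan word contains $1\text{-}21$ if and only if it has a descent if and only if it contains $1\text{-}2\text{-}1$: if $\pi_q>\pi_{q+1}$ then, since $\pi$ begins with $1$ and has unit increases, the value $\pi_{q+1}$ already occurs among $\pi_1,\dots,\pi_{q-1}$, giving a vincular occurrence of $1\text{-}21$; and a descent-free Catalan word is weakly increasing, hence avoids $1\text{-}2\text{-}1$ outright.

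Granting these equivalences, the proposition reduces to the classical counts from \cite{BKV2}, namely $c_n(1\text{-}2\text{-}3)=c_n(1\text{-}2\text{-}1)=c_n(1\text{-}1\text{-}2)=2^{n-1}$, $c_n(2\text{-}1\text{-}2)=F_{2n-1}$ and $c_n(3\text{-}1\text{-}2)=\tfrac{3^{n-1}+1}{2}$; the first three may also be seen directly, as a Catalan word avoids $1\text{-}2\text{-}3$ precisely when it uses only the letters $1,2$ and avoids $1\text{-}2\text{-}1$ precisely when it is weakly increasing, each such family having $2^{n-1}$ members. The only step requiring care — and the only genuinely Catalan-specific one — is pinning down the intermediate value $\pi_{r_0-1}$ exactly in each sliding argument, which is nothing more than the verification already carried out in the text for $3\text{-}12$.
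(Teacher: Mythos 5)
Your proposal is correct and follows essentially the same route as the paper: establish that each vincular pattern is equivalent on $\mathcal{C}_n$ to its classical counterpart via the sliding argument already given for $3\text{-}12$ (the paper simply asserts this ``by comparable reasoning''), then quote the enumerations from \cite{BKV2}. Your explicit verification of the intermediate values $\pi_{r_0-1}$, $\pi_{r_0}$ in each case, and the descent-based handling of $1\text{-}21$, correctly supply the details the paper leaves to the reader.
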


\subsection{The cases 1-22, 1-32 and 2-31}

We start with the following result for the pattern 1-32.

\begin{theorem}\label{RcasesTh1}
If $n \geq 1$, then $c_n(1\text{-}32)=F_{2n-1}$.
\end{theorem}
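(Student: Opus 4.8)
The plan is to first convert avoidance of $1\text{-}32$ into a simple structural condition, then decompose the word at its occurrences of the letter $1$, and finally read off the generating function and recognize it as that of $(F_{2n-1})$.

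\emph{Step 1: a structural characterization.} I will show that $\pi=\pi_1\cdots\pi_n\in\mathcal{C}_n$ avoids $1\text{-}32$ if and only if every descent bottom of $\pi$ equals $1$, i.e.\ $\pi_{j+1}<\pi_j$ forces $\pi_{j+1}=1$. For the forward direction, note that $\pi_1=1$ and $\pi_2\le\pi_1+1$ preclude a descent at position $1$, so any descent occurs at some $j\ge 2$; if its bottom satisfied $\pi_{j+1}\ge 2$, then $\pi_1=1<\pi_{j+1}<\pi_j$ would be an occurrence of $1\text{-}32$. Conversely, in any occurrence $\pi_i<\pi_{j+1}<\pi_j$ (with $i<j$) the index $j$ is a descent with bottom $\pi_{j+1}>\pi_i\ge 1$, so the condition fails.

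\emph{Step 2: decomposition at the $1$'s.} Using $\pi_1=1$, write $\pi=\sigma^{(1)}\cdots\sigma^{(k)}$, where each $\sigma^{(i)}$ begins with a $1$ and contains no other $1$. By Step~1, no $\sigma^{(i)}$ can contain a descent, so each $\sigma^{(i)}$ is weakly increasing with unit steps; since its only $1$ is its first letter, it equals either $1$ or $1\,2^{a_2}3^{a_3}\cdots m^{a_m}$ with all $a_j\ge 1$. Conversely, any concatenation of such segments is a Catalan word (each junction is a weak decrease to $1$, so the growth condition holds) that avoids $1\text{-}32$ (its descent bottoms are precisely the leading $1$'s of the segments), and the positions of $1$ recover the segment boundaries; hence the map is a bijection between $\mathcal{C}_n(1\text{-}32)$ and finite sequences of such segments with total length $n$.

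\emph{Step 3: generating function and Fibonacci.} A segment of length $\ell$ is unique for $\ell=1$, and for $\ell\ge 2$ is determined by the $\ell-2$ consecutive differences ($0$ or $1$) of the unit-step weakly increasing word of length $\ell-1$ starting at $2$, giving $2^{\ell-2}$ segments. Thus the segment generating function is $G(t)=t+\sum_{\ell\ge 2}2^{\ell-2}t^\ell=\frac{t-t^2}{1-2t}$, and summing over the number $k\ge 1$ of segments gives $\sum_{n\ge1}c_n(1\text{-}32)t^n=\frac{G(t)}{1-G(t)}=\frac{t-t^2}{1-3t+t^2}$. I will finish by observing that this is the generating function of $(F_{2n-1})_{n\ge1}$: from the generating function, $c_1=1$, $c_2=2$, and $c_n=3c_{n-1}-c_{n-2}$ for $n\ge 3$, which are exactly the initial values and recurrence satisfied by $F_{2n-1}$, the recurrence following from $F_{2n-1}=F_{2n-2}+F_{2n-3}=3F_{2n-3}-F_{2n-5}$.

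The only delicate point is Step~1 together with checking that the decomposition of Step~2 is genuinely reversible (in particular that arbitrary concatenations of the listed segments stay inside $\mathcal{C}_n$ and remain $1\text{-}32$-avoiding); once the principle ``every descent drops to $1$'' is established, the remaining enumeration is routine.
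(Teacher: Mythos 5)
Your proof is correct. It rests on the same structural fact that drives the paper's argument, namely that a Catalan word avoids $1\text{-}32$ precisely when every descent bottom equals $1$ (the paper uses this only implicitly, in checking that its map $f$ lands in $\mathcal{C}_n(1\text{-}32)$, whereas you isolate it as an explicit equivalence in Step~1 — a worthwhile clarification). Where you diverge is in the enumeration: the paper introduces an auxiliary family $\Omega_n$ of marked weakly increasing Catalan words, proves $|\Omega_n|=F_{2n-1}$ by a combinatorial derivation of the recurrence $a_n=3a_{n-1}-a_{n-2}$ involving a case analysis on the final run and its marking, and then exhibits a bijection $\Omega_n\to\mathcal{C}_n(1\text{-}32)$; your decomposition at the occurrences of $1$ produces essentially the same segments as the image of that bijection, but you count them directly via the segment generating function $G(t)=\frac{t-t^2}{1-2t}$ and the geometric series $\frac{G}{1-G}=\frac{t-t^2}{1-3t+t^2}$, recognized as the generating function of $(F_{2n-1})$. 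This is shorter and more self-contained, at the cost of being less bijective in flavor; the paper's route has the side benefit of a combinatorially meaningful intermediate object. Two small cosmetic points: in Step~1 the reason no descent occurs at position $1$ is simply that $\pi_2\ge 1=\pi_1$ (positivity of the letters), not the upper bound $\pi_2\le\pi_1+1$; and in Step~2 you should note that a junction where a length-one segment is followed by another segment is a level rather than a descent, which is covered by your phrase ``weak decrease'' but worth making explicit. Neither affects the validity of the argument.
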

\begin{proof}
Let $\Omega_n$ denote the set of marked words $w=w_1\cdots w_n$ wherein $w$ is a weakly increasing Catalan word in which runs of letters are marked such that (i) the initial run of $1$'s is always marked and (ii) no two adjacent runs of letters may be marked.  Let $a_n=|\Omega_n|$ and we first show $a_n=F_{2n-1}$ for $n \geq 1$.  Then $a_1=1$ and $a_2=2$, and we show $a_n=3a_{n-1}-a_{n-2}$ for $n \geq 3$. Note first that there are $a_{n-1}$ members of $\Omega_n$ whose final run is of length at least two, upon appending a copy of the final letter to a member of $\Omega_{n-1}$.  Further, there are also $a_{n-1}$ members of $\Omega_n$ ending in a run of length one such that this run is of the opposite status concerning whether or not it is marked than the run directly preceding it.  To see this, append $m+1$ to a member of $\Omega_{n-1}$ whose last (= largest) letter is $m$ for each $m \geq 1$, with $m+1$ being marked if and only if the run containing $m$ is unmarked. To complete the proof of the recurrence, it suffices to show that there are $a_{n-1}-a_{n-2}$ members of $\Omega_n$ for $n \geq 3$ ending in two or more unmarked runs, with the last run of length one.   Let $\Omega_n'$ and $\Omega_n''$ denote the subsets of $\Omega_n$ ending in an unmarked or a marked run, respectively. Then there are $a_{n-1}$ members of $\Omega_n''$, which may be realized by appending a marked $m+1$ to $\lambda \in \Omega_{n-1}'$ or appending $m$ to $\lambda \in \Omega_{n-1}''$, where $m$ denotes the largest letter of $\lambda$ in either case. Hence, by subtraction, there are $a_{n-1}-a_{n-2}$ members of $\Omega_{n-1}'$, and appending an unmarked $m+1$ as before yields the desired members of $\Omega_n$.

We now define a bijection between $\Omega_n$ and $\mathcal{C}_n(1\text{-}32)$ for all $n \geq 1$, which will imply the second result.  Represent $\pi \in \Omega_n$ by $\pi=\pi^{(1)}\cdots\pi^{(s)}$ for some $s \geq 1$, where each $\pi^{(i)}$ starts with a marked run and contains no other marked runs, with $\min(\pi^{(i+1)})=\max(\pi^{(i)})+1$ for $1 \leq i \leq s$.  Let $\min(\pi^{(i)})=j_i$ for each $i \in [s]$ so that $\pi \in \Omega_n$ implies $j_{i+1}>j_i+1$ for all $i$.  Let $f(\pi)$ be the sequence obtained from $\pi$ by replacing the section $\pi^{(i)}$ for each $i \in [s]$ with $\pi^{(i)}-(j_i-1)$, i.e, subtract $j_i-1$ from each letter in $\pi^{(i)}$.  Then $\pi$ a weakly increasing Catalan word marked as described implies $f(\pi) \in \mathcal{C}_n(1\text{-}32)$ for all $\pi$ as each descent bottom in $f(\pi)$ is seen to be $1$.  One may verify that the mapping $f$ yields the desired bijection between $\Omega_n$ and $\mathcal{C}_n(1\text{-}32)$.
\end{proof}

Let $M_n$ denote the $n$-th Motzkin number for $n \geq0$, see, e.g., \cite[A001006]{Sloane}. Recall that $M_n$ enumerates the set $\mathcal{M}_n$  of lattice paths from $(0,0)$ to $(n,0)$ using $u$, $d$ and $h=(1,0)$ steps that never go below the $x$-axis (termed \emph{Motzkin} paths).  Let $L_n$ for $n \geq0$ denote the $n$-th term of the sequence A005773 from \cite{Sloane}.  Note that $L_n$ enumerates the set of lattice paths from $(0,0)$ to the line $x=n-1$ using $u$, $d$ and $h$ steps that never go below the $x$-axis (termed \emph{Motzkin left-factors}, see \cite[p.\,111]{AS}).

The following result connects the sequences $M_n$ and $L_n$ to vincular pattern avoidance by Catalan words.

\begin{theorem}\label{RcasesTh2}
If $n \geq 1$, then $c_n(1\text{-}22)=M_n$ and $c_n(2\text{-}31)=L_n$.
\end{theorem}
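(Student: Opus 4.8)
The plan is to establish each of the two equalities by constructing a bijection from $\mathcal{C}_n(\tau)$ to a combinatorial family known to be counted by the target sequence, in the same spirit as the proof of Theorem~\ref{RcasesTh1}. For the pattern $1\text{-}22$, I would first analyze the structure of a word $\pi \in \mathcal{C}_n$ that avoids $1\text{-}22$: an occurrence of $1\text{-}22$ is a letter $\pi_i=a$ strictly below a subsequent level $\pi_j=\pi_{j+1}=b$ with $a<b$. Avoiding this means that whenever a level (a repeated adjacent pair) occurs at height $b$, no smaller letter may appear anywhere to its left; equivalently, every level step in the word must occur at the current running minimum of the suffix that follows, which forces levels to occur only at height $1$ (since $\pi_1=1$), or more precisely at heights that are never subsequently undercut. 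I would then set up a step-by-step encoding of such a word as a Motzkin path: reading $\pi$ left to right, a strict ascent $\pi_{i+1}=\pi_i+1$ becomes a $u$-step, a descent $\pi_{i+1}<\pi_i$ becomes a $d$-step, and a level $\pi_{i+1}=\pi_i$ becomes an $h$-step, with the parameter $n$ (word length, not number of letters beyond the first) matching the Motzkin path length after accounting for the forced initial letter. The $1\text{-}22$-avoidance condition is exactly what guarantees that the resulting path of $u$, $d$, $h$ steps can be normalized to a genuine Motzkin path returning to the axis and never dipping below it; the descents in a Catalan word can drop by more than one, so part of the work is showing the height sequence can be rescaled (as in the $f$-map of Theorem~\ref{RcasesTh1}, replacing block minima) so that only unit drops remain.

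For the pattern $2\text{-}31$, the avoidance condition is: there is no letter $\pi_i=b$ followed later by an adjacent descent $\pi_j=c$, $\pi_{j+1}=a$ with $a<b<c$. I would again pass to a path encoding but now land in the set of Motzkin left-factors counted by $L_n=$ A005773, i.e. paths with $u,d,h$ steps from $(0,0)$ to the line $x=n-1$ that stay weakly above the axis but need not return. The key observation is that in a $2\text{-}31$-avoiding Catalan word, the descent tops are constrained: once a descent lands at some height $a$, no letter strictly between $a$ and the descent top may have occurred before that descent, which restricts where "high" descents can sit relative to the earlier profile, and this is precisely what removes the return-to-zero / full-height constraint and converts the Dyck-type object into a left-factor. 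I expect to verify the count $a_1=1$, $a_2=2$ and a recurrence of the form $L_n = 2L_{n-1} + $ (correction terms) matching A005773's known recurrence $n L_n = (n-1)L_{n-1} + (2n-1)L_{n-2}$ or the simpler $L_n = L_{n-1} + \sum L_k$–type relation, possibly by first refining $c_n(2\text{-}31)$ by the last letter of $\pi$ (equivalently the terminal height of the left-factor) to get a clean recurrence, then summing.

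The main obstacle, in both cases, is pinning down the exact path-surjectivity: showing not merely that each avoiding word maps to a path of the right shape, but that \emph{every} Motzkin path (resp.\ every Motzkin left-factor) arises from a unique avoiding Catalan word. This requires exhibiting the inverse map explicitly — reconstructing the word's actual letter values from the step sequence — and checking that the reconstructed word both satisfies $\pi_{i+1}\le\pi_i+1$ with $\pi_1=1$ and genuinely avoids the pattern, with no spurious occurrences created by the rescaling of descents. I anticipate handling this by induction on $n$ together with a refinement parameter (largest letter or last letter) so that appending a step to a shorter path corresponds transparently to appending a letter to a shorter avoiding word, and the pattern-avoidance is preserved because any new occurrence would have to involve the appended letter, which is easy to rule out locally. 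The bijections for $1\text{-}22$ and $2\text{-}31$ should then be essentially parallel, differing only in the boundary condition (return to $0$ versus free endpoint) that distinguishes $M_n$ from $L_n$.
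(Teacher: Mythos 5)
Your proposal is a strategy outline rather than a proof, and in the first half the specific encoding you commit to demonstrably fails. Your structural observation for $1\text{-}22$ is essentially correct: since $\pi_1=1$, any level $\pi_j=\pi_{j+1}=b$ with $b\geq2$ has the smaller letter $\pi_1$ to its left, so avoidance of $1\text{-}22$ is exactly the condition that every level occurs at height $1$ (the relevant condition is on the prefix preceding the level, not on ``the suffix that follows''). But the transition encoding you then propose---ascent $\mapsto u$, descent $\mapsto d$, level $\mapsto h$---sends a word of length $n$ to a path with $n-1$ steps, whereas the target count is $M_n$, and the resulting path need be neither nonnegative-ending-at-zero nor even well defined for descents of size greater than one: already $12\mapsto u$ is not a Motzkin path, and for $n=3$ there are four avoiders ($111,112,121,123$) but only $M_2=2$ Motzkin paths of length two. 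So the ``normalization/rescaling'' you defer is not a technical afterthought; it is the entire content of the bijection. The paper instead works on the Dyck path side: under $\imath$ the class $\mathcal{C}_n(1\text{-}22)$ becomes the set $\mathcal{D}_n^*$ of Dyck paths whose only $udu$ factors have both $u$'s ending at height one, and a recursively defined, length-preserving map on the unit decomposition (each unit $ud$ becomes an $h$ step, and each longer unit $u\rho d$ becomes $u\alpha(\rho)d$, where $\alpha$ is a companion bijection between $udu$-avoiding Dyck paths of semilength $k$ and $\mathcal{M}_{k-1}$) carries $\mathcal{D}_n^*$ onto $\mathcal{M}_n$. Nothing in your proposal supplies a substitute for this recursion, and the inverse-map difficulty you flag at the end is precisely where your argument is missing.

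For $2\text{-}31$ the situation is simpler than you suggest, and you stop just short of the key fact. If some adjacent pair satisfies $\pi_j-\pi_{j+1}\geq2$, then the letter $\pi_{j+1}+1$ lies strictly between the descent bottom and top and must already occur before position $j$, because a Catalan word increases only by unit steps and therefore passes through every intermediate value on the way up to $\pi_j$; taking its leftmost occurrence as the ``2'' produces an occurrence of $2\text{-}31$. Conversely, a unit descent admits no letter strictly between its top and bottom. Hence $\mathcal{C}_n(2\text{-}31)$ is exactly the set of smooth Catalan words, those with $|\pi_{i+1}-\pi_i|\leq1$, and their enumeration by $L_n$ is the result the paper invokes from \cite[Theorem 2.1]{MSh2}; alternatively, for smooth words the naive step encoding you describe genuinely is a bijection with Motzkin left-factors of length $n-1$, since $\pi_i-1$ then traces a nonnegative $u/h/d$ path with free endpoint. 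Your plan of refining by the last letter and hunting for a recurrence could be made to work, but as written neither the characterization by smoothness nor any recurrence is actually established, so this half too remains unproved.
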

\begin{proof}
Let $\mathcal{C}_n'$ denote the subset of $\mathcal{C}_n$ whose members contain no two equal adjacent entries, i.e., contain no levels.  Let $\mathcal{D}_n(\rho)$ denote the subset of $\mathcal{D}_n$ whose members avoid the string of steps $\rho$.  Then we have that the subset $\mathcal{C}_n'$ of $\mathcal{C}_n$ corresponds under $\imath$ to $\mathcal{D}_n(udu)$.  We first define a bijection $\alpha_n$ between $\mathcal{D}_n(udu)$ and $\mathcal{M}_{n-1}$ for each $n\geq1$, which will be needed in what follows.  If $n=1$, let $\alpha_1(ud)$ be the empty lattice path of length zero.  If $n \geq 2$, let $\tau \in \mathcal{D}_n(udu)$ be decomposed as $\tau=\tau^{(1)}\cdots \tau^{(r)}$ for some $ r\geq 1$, where $\tau^{(j)}$ for each $j \in [r]$ is of the form $\tau^{(j)}=u\rho^{(j)}d$ with $\rho^{(j)}$ a possibly empty Dyck path (the $\tau^{(j)}$ are what are often referred to as the \emph{units} of the Dyck path $\tau$, with the same terminology applied to members of $\mathcal{M}_n$).

Let $k_j=|\tau^{(j)}|-1$ for $1 \leq j \leq r$, where $|\rho|$ denotes the semi-length (i.e., half the total number of steps) of a Dyck path $\rho$.  Note that $\tau$ avoiding $udu$ implies $k_j \geq 1$ for $j \in [r-1]$, with $k_r \geq 0$.  We then define $\alpha_n$ for $n\geq 2$ by letting
$$\alpha_n(\tau)=u\alpha_{k_1}(\rho^{(1)})d\cdots u\alpha_{k_{r-1}}(\rho^{(r-1)})d\sigma,$$
where $\sigma$ is empty if $k_r=0$ and is given by $h\alpha_{k_r}(\rho^{(r)})$ if $k_r>0$.  For example, this yields $\alpha_2(u^2d^2)=h$ and $\alpha_3(u^2d^2ud)=ud$, $\alpha_3(u^3d^3)=h^2$ and $\alpha_4(u^2d^2u^2d^2)=udh$, $\alpha_4(u^3d^3ud)=uhd$, $\alpha_4(u^3d^2ud^2)=hud$, $\alpha_4(u^4d^4)=h^3$, which covers all cases for $2 \leq n \leq 4$.  One may verify that $\alpha_n$ defines a bijection between $\mathcal{D}_n(udu)$ and $\mathcal{M}_{n-1}$ for all $n \geq 1$. Note that $\alpha_n$ may be reversed by considering the leftmost $h$ step of height zero, if it exists, within a member $\lambda$ of $\mathcal{M}_{n-1}$ as well as any units of $\lambda$ occurring to the left of this $h$ (which determine all units but the last of $\alpha_n^{-1}(\lambda)$).

Observe that $\mathcal{C}_n(1\text{-}22)$ corresponds under $\imath$ to the subset $\mathcal{D}_n^*$ of $\mathcal{D}_n$ consisting of those members which avoid all occurrences of $udu$ except for possibly those in which both $u$ steps end at height one.  We define a bijection $\beta_n$ between $\mathcal{D}_n^*$ for each $n \geq 1$  and $\mathcal{M}_n$ and taking the composition $\beta_n \circ \imath$ will yield the desired bijection between $\mathcal{C}_n(1\text{-}22)$ and $\mathcal{M}_n$.  To define $\beta_n$, let $\tau \in \mathcal{D}_n^*$ be decomposed into units as $\tau=\tau^{(1)}\cdots \tau^{(r)}$ like before and note that $k_j=0$ is now also possible for $j \in [r-1]$ (as $udu$'s flush with the $x$-axis are permitted in members of $\mathcal{D}_n^*$). If $k_j \geq 1$ for some $j \in [r]$, then we replace $\tau^{(j)}$ with $u\alpha_{k_j}(\rho^{(j)})d$, where $k_j$ and $\rho^{(j)}$ are as before.  If $k_j=0$, i.e., $\tau^{(j)}=ud$, then replace $\tau^{(j)}$ with $h$.  Let $\beta_n(\tau)$ denote the lattice path resulting when one makes all of the replacements as described above.  One may verify $\beta_n(\tau)\in \mathcal{M}_n$ for all $\tau$.  Further, one has that $\beta_n$ is reversible, and hence a bijection as desired, upon considering the units and any $h$ steps of height zero within a member of $\mathcal{M}_n$.  This completes the proof of the first equality.

For the second, note that $\pi=\pi_1\cdots\pi_n \in \mathcal{C}_n(2\text{-}31)$ must satisfy $|\pi_{i+1}-\pi_i|\leq 1$ for $1 \leq i \leq n-1$.  To see this, note that if $\pi_j\pi_{j+1}=ab$ with $a \geq b+2$ for some $j \in [n-1]$, then there must be an occurrence of 2-31, as witnessed by the subsequence $\pi_k\pi_j\pi_{j+1}$, where $k<j$ denotes the position of the leftmost occurrence of the letter $a-1$.  Thus, members of $\mathcal{C}_n(2\text{-}31)$ correspond to the set $\mathcal{T}_n$ of \emph{smooth} Catalan sequences of length $n$.  That $|\mathcal{T}_n|=L_n$ for $n \geq 1$ was shown in \cite[Theorem 2.1]{MSh2}, where a more general result concerning the generating function was deduced, which implies the second equality.
\end{proof}

\subsection{The cases 2-21 and 3-21}

One can find recurrences enumerating the members of $\mathcal{C}_n(\tau)$, where $\tau$ is 2-21 or 3-21, by refining the number of avoiders according to the same pair of parameters in either case.  Let $u_n(m,a)$ denote the cardinality of the subset of $\mathcal{C}_n(2\text{-}21)$ whose members have largest letter $m$ and last letter $a$, where $n \geq 1$ and $1 \leq a \leq m \leq n$, and put $u_n(m,a)=0$ otherwise.  We define the array $v_n(m,a)$ analogously in conjunction with the pattern 3-21.

We have the following recurrence for the array $u_n(m,a)$.

\begin{lemma}\label{2-21lem1}
If $n \geq 3$ and $2 \leq m \leq n$, then
\begin{align}
u_n(m,a)&=u_{n-1}(m,a-1)+u_{n-1}(m,a)+u_{n-2}(m-1,m-1), \quad 1 \leq a \leq m-1, \label{2-211rec1x}\\
u_n(m,m)&=u_{n-1}(m-1,m-1)+u_{n-1}(m,m-1)+u_{n-1}(m,m), \label{2-211rec2x}
\end{align}
with $u_n(1,1)=1$ for all $n \geq 1$ and $u_2(2,1)=0$, $u_2(2,2)=1$.
\end{lemma}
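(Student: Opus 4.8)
The plan is to argue combinatorially by deleting the last letter. First I record the characterization that a Catalan word $\pi=\pi_1\cdots\pi_n$ avoids $2\text{-}21$ precisely when, for every descent $\pi_j>\pi_{j+1}$, the value $\pi_j$ fails to occur among $\pi_1,\ldots,\pi_{j-1}$; I also record the elementary structural fact about Catalan words used throughout: if a value $v\ge2$ occurs at position $p$ but at no earlier position, then the condition $\pi_{i+1}\le\pi_i+1$ forces $\pi_r\le v-1$ for all $r<p$ and, in particular, $\pi_{p-1}=v-1$ (the word must climb through every intermediate height). Note also that deleting the last letter of a member of $\mathcal{C}_n(2\text{-}21)$ always yields a member of $\mathcal{C}_{n-1}(2\text{-}21)$, since erasing a letter cannot create a pattern occurrence.

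Fix $n\ge3$ and $2\le m\le n$, let $\pi\in\mathcal{C}_n(2\text{-}21)$ have largest letter $m$ and last letter $a$, and put $\pi'=\pi_1\cdots\pi_{n-1}$, $b=\pi_{n-1}$, so that $b\ge a-1$ by the Catalan condition. Consider first the case $a=m$. Since $b\le m$, the adjacent pair $(b,m)$ formed at positions $n-1,n$ is not a descent, and $\pi_n=m$ cannot begin an occurrence of $2\text{-}21$; hence appending $m$ to any member of $\mathcal{C}_{n-1}(2\text{-}21)$ is legal and avoidance-preserving, so $u_n(m,m)$ simply counts such preimages, sorted by the pair $(\max\pi',\,b)$. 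One checks that $\max\pi'\in\{m-1,m\}$, with the pair equal to $(m-1,m-1)$ exactly when $m$ occurs only in the last position and equal to $(m,m-1)$ or $(m,m)$ otherwise; these three possibilities give the three summands of \eqref{2-211rec2x}.

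Now take $1\le a\le m-1$. Since $a<m$, the letter $m$ must already occur in $\pi'$, so $\max\pi'=m$. If $b\le a$, i.e.\ $b\in\{a-1,a\}$, then $(b,a)$ is not a descent, no occurrence is created, and conversely every member of $\mathcal{C}_{n-1}(2\text{-}21)$ with largest letter $m$ and last letter $a-1$ (resp.\ $a$) may be so extended; this yields the terms $u_{n-1}(m,a-1)$ and $u_{n-1}(m,a)$. If instead $b\ge a+1$, then $(b,a)$ is a descent at position $n-1$, so avoidance forces $b\notin\{\pi_1,\ldots,\pi_{n-2}\}$; by the structural fact, $\pi_1\cdots\pi_{n-2}$ has all entries at most $b-1$ with $\pi_{n-2}=b-1$, and combining this with $\max\pi'=m$ forces $b=m$. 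Thus $\pi=\pi_1\cdots\pi_{n-2}\,m\,a$, where $\pi_1\cdots\pi_{n-2}\in\mathcal{C}_{n-2}(2\text{-}21)$ has largest letter and last letter both equal to $m-1$; conversely, appending $m$ and then any value in $[1,m-1]$ to such a word is legal and preserves avoidance, since the new letter $m$ is a strict record immediately before the end and so the descent it forms carries no earlier copy of $m$. For each fixed $a$ this correspondence is a bijection, contributing $u_{n-2}(m-1,m-1)$, and adding the three disjoint cases gives \eqref{2-211rec1x}. Finally, $1^n$ is the only Catalan word with largest letter $1$, so $u_n(1,1)=1$; and for $n=2$ the only word with largest letter $2$ is $12$, giving $u_2(2,2)=1$ and $u_2(2,1)=0$. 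The stated zero conventions make the recurrences valid even in the boundary case $m=n$.

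The step I expect to be the main obstacle is the third case of \eqref{2-211rec1x}: one must show that a descent at position $n-1$ in an avoider forces the descent top to be the global maximum $m$ and completely rigidifies the prefix $\pi_1\cdots\pi_{n-2}$, and then check carefully that the resulting correspondence with $\mathcal{C}_{n-2}(2\text{-}21)$ is a genuine bijection in both directions rather than merely an injection.
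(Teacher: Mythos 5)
Your proof is correct and follows essentially the same route as the paper's: delete the last letter, case on the penultimate value $b$, observe that $b\in\{a-1,a\}$ contributes the first two terms, and show that a terminal descent forces its top to be a leftmost occurrence and hence the global maximum $m$, rigidifying the prefix into a member counted by $u_{n-2}(m-1,m-1)$. The explicit characterization of $2\text{-}21$-avoidance via descent tops and the structural fact about first occurrences are exactly the (implicit) ingredients of the paper's argument, so there is nothing further to add.
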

\begin{proof}
Let $\mathcal{U}_{n,m,a}$ denote the subset of $\mathcal{C}_n(2\text{-}21)$ enumerated by $u_n(m,a)$.  Suppose $\pi \in \mathcal{U}_{n,m,a}$, where $n \geq 3$ and $1 \leq a \leq m-1$.  Let $\ell$ denote the penultimate letter of $\pi$.  If $\ell \leq a$, then only $\ell =a-1$ or $\ell=a$ are possible since $\pi \in \mathcal{C}_n$ (where the former case only applies if $a>1$).  This yields $u_{n-1}(m,a-1)$ and $u_{n-1}(m,a)$ possibilities for $\pi$, respectively.  Note that $a<\ell<m$ is not possible, for otherwise there would be an occurrence of 2-21 in which the letters corresponding to the `2' correspond to the first and last occurrences of $\ell$.  Finally, if $\ell=m$, then there can be no other occurrences of $m$ in $\pi$ as $a<m$.  That is, $\pi=\pi'ma$, where $\pi' \in \mathcal{U}_{n-2,m-1,m-1}$.  Thus, there are $u_{n-2}(m-1,m-1)$ possibilities for $\pi$ if $\ell=m$ and combining with the prior cases yields \eqref{2-211rec1x}.  On the other hand, if $\pi \in \mathcal{U}_{n,m,m}$, where $m \geq 2$,
then there are $u_{n-1}(m,m-1)+u_{n-1}(m-1,m-1)$ possibilities if $\ell=m-1$, the second term accounting for cases in which the largest letter occurs only in the final position of $\pi$, and $u_{n-1}(m,m)$ possibilities if $\ell=m$, which implies \eqref{2-211rec2x}.  Clearly, $\mathcal{U}_{n,1,1}$ contains only the sequence $1^n$ for all $n \geq 1$, with $\mathcal{U}_{2,2,1}=\varnothing$ and $\mathcal{U}_{2,2,2}=\{12\}$, which implies the initial conditions.
\end{proof}

Define the distribution polynomial $\widetilde{u}_n(x,y)=\sum_{m=1}^n\sum_{a=1}^mu_n(m,a)x^{m-a}y^m$ for $n \geq 1$ and its generating function $u(t;x,y)=\sum_{n\geq1}\widetilde{u}_n(x,y)t^n$.  Define $\widetilde{v}_n(x,y)$ and $v(t;x,y)$ analogously in conjunction with the pattern 3-21.  There is the following explicit formula for $u(t;x,y)$ in the case $x=y=1$.

\begin{theorem}\label{th2-21}
We have
\begin{equation}\label{th2-21e1}
\sum_{n\geq1}c_n(2\text{-}21)t^n=\frac{t}{1-2t}-\sum_{i\geq1}\frac{(-1)^it^{\frac{i(i+5)}{2}}(1-2t)\sum_{j=1}^i\frac{(1-t)^j}{(1-2t)(1-t)^j-(1-t)^2t^j}}{\prod_{j=1}^i\left((1-2t)(1-t)^j-(1-t)^2t^j\right)}.
\end{equation}
\end{theorem}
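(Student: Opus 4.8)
The plan is to derive a functional equation for $u(t;x,y)$ from the recurrence in Lemma~\ref{2-21lem1}, solve it via iteration together with the kernel method, and then extract the generating function $\sum_{n\ge1}c_n(2\text{-}21)t^n = u(t;1,1)$. First I would multiply \eqref{2-211rec1x} by $x^{m-a}y^m t^n$ and sum over the relevant range of $n,m,a$, doing the same with \eqref{2-211rec2x} for the diagonal terms $u_n(m,m)$, being careful to keep track of the initial conditions $u_n(1,1)=1$, $u_2(2,1)=0$, $u_2(2,2)=1$. The term $u_{n-1}(m,a-1)$ contributes a factor $tx$ (shifting $a\mapsto a-1$ raises $m-a$ by one), the term $u_{n-1}(m,a)$ contributes $t$, and the term $u_{n-2}(m-1,m-1)$ — which is independent of $a$, so summing $x^{m-a}$ over $1\le a\le m-1$ produces a geometric sum $\frac{x(x^{m-1}-1)}{x-1}$ in disguise — will force the appearance of an auxiliary series in which $x$ is specialized. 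This is the usual mechanism by which a "catalytic" variable $x$ gets eliminated: the equation will have the shape
\[
u(t;x,y) = \text{(explicit rational part)} + txy\cdot u(t;x,y) + \frac{\text{something}}{1-x}\bigl(u(t;1,y)-\text{correction}\bigr) + \cdots,
\]
where the diagonal generating function $u(t;1,y)$ (or a closely related object such as $\sum_n u_n(m,m)y^m t^n$) appears as an unknown.

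Next I would solve the kernel: setting the coefficient of $u(t;x,y)$ to zero gives a relation $x = x(t,y)$, and substituting this root eliminates $u(t;x,y)$ and expresses the diagonal unknown in terms of known quantities. Because the pattern 2-21 couples the "new largest letter" events across \emph{two} steps (the $u_{n-2}$ term), I expect the kernel equation not to close in one stroke but to require iteration: each application expresses the level-$i$ unknown in terms of the level-$(i+1)$ unknown, generating the nested product $\prod_{j=1}^i\bigl((1-2t)(1-t)^j-(1-t)^2 t^j\bigr)$ in the denominator of \eqref{th2-21e1} and the alternating sign $(-1)^i$ together with the triangular-number exponent $t^{i(i+5)/2}$ from the cumulative shifts. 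The factor $(1-2t)(1-t)^j-(1-t)^2 t^j$ should emerge precisely as the kernel polynomial at the $j$-th stage of iteration, after the substitution $x\to t$ (or $x\to t/(1-t)$) that the geometric sum over $a$ dictates; the inner sum $\sum_{j=1}^i \frac{(1-t)^j}{(1-2t)(1-t)^j-(1-t)^2 t^j}$ will be the accumulated contribution of the lower-order terms peeled off at each iteration step. The leading term $\frac{t}{1-2t}$ is the $i=0$ (unrestricted-in-$m$) piece, consistent with $u_n(1,1)=1$ summing to $\frac{t}{1-t}$ plus the geometric growth from introducing new letters freely.

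The main obstacle will be the bookkeeping in the iteration: correctly identifying what plays the role of $x$ after the geometric summation over $a$, verifying that the kernel at stage $j$ is exactly $(1-2t)(1-t)^j-(1-t)^2 t^j$ and not a spurious associate, and checking that the exponents $\frac{i(i+5)}{2}$ and signs $(-1)^i$ accumulate correctly — a single off-by-one in the shift at each stage propagates into a wrong exponent. I would verify convergence of the resulting series in $\mathbb{Z}[[t]]$ by noting that the $i$-th summand is divisible by $t^{i(i+5)/2}$, so only finitely many terms contribute to each coefficient, and then confirm the formula numerically against the data $1,2,5,13,34,90,240,643,1728,4654$ from Table~\ref{tab1}, expanding \eqref{th2-21e1} through $t^{10}$. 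Finally, I would remark that the case $x=y=1$ is the only one in which the nested structure collapses to a closed form, which is why the theorem is stated at this specialization rather than for the full bivariate $u(t;x,y)$.
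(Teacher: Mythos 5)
Your overall strategy---translate Lemma~\ref{2-21lem1} into a functional equation for $u(t;x,y)$, cancel the kernel at $x=t/(1-t)$, and iterate the resulting relation between $u(t;0,y)$ and $u(t;0,\frac{yt}{1-t})$---is exactly the paper's route, and your identification of the auxiliary unknown as $\sum_n u_n(m,m)y^mt^n$ (i.e.\ $u(t;0,y)$, not $u(t;1,y)$; the $x^{m-a}$ weighting means setting $x=0$, not $x=1$, isolates the diagonal) and of the kernel root $x=t/(1-t)$ is essentially correct. However, there is a genuine gap at the final and most delicate step. After iteration one obtains a closed bivariate expression for $u(t;x,y)$ whose terms carry factors of $\frac{1}{1-x}$, so the specialization $x=1$ is a removable singularity of the form $0/0$, not a point where "the nested structure collapses to a closed form" as you assert; the paper must compute $\lim_{x\to1}u(t;x,1)$ by L'H\^{o}pital's rule, differentiating the infinite series term by term in $x$ and justifying the interchange of differentiation and summation via uniform convergence. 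Your proposal contains no trace of this step, and consequently misattributes the inner sum $\sum_{j=1}^i\frac{(1-t)^j}{(1-2t)(1-t)^j-(1-t)^2t^j}$ to "lower-order terms peeled off at each iteration step." In fact that sum arises only at the differentiation stage, as the logarithmic derivative of the product $\prod_{j}\bigl((1-2t)(1-t)^{j+1}-x(1-t)^2t^{j+1}\bigr)$ evaluated at $x=1$, combined with the difference of the two series (one in $y$, one in $xy$) that appear in the solved form of the functional equation. Without recognizing the $0/0$ issue and carrying out this limit, your derivation cannot produce the numerator of \eqref{th2-21e1}, so the plan as written would stall precisely where the stated formula's characteristic structure is created.
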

\begin{proof}
Define $u_n(m;x)=\sum_{a=1}^mu_n(m,a)x^{m-a}$ for $n \geq 1$ and $1 \leq m \leq n$.  Then recurrences \eqref{2-211rec1x} and \eqref{2-211rec2x} can be written for $n \geq 3$ as
\begin{align}
u_n(m;x)&=\frac{1}{x}(u_{n-1}(m;x)-u_{n-1}(m;0))+u_{n-1}(m;x)+\frac{x-x^m}{1-x}u_{n-2}(m-1;0)\notag\\
&\quad+u_{n-1}(m-1;0), \quad 2 \leq m \leq n,\label{un(m;x)}
\end{align}
with $u_n(1;x)=1$ for all $n \geq 1$ and $u_2(2;x)=1$.  By \eqref{un(m;x)}, we have
\begin{align}
\widetilde{u}_n(x,y)&=\frac{1}{x}(\widetilde{u}_{n-1}(x,y)-\widetilde{u}_{n-1}(0,y))+\widetilde{u}_{n-1}(x,y)+\frac{xy}{1-x}(\widetilde{u}_{n-2}(0,y)-\widetilde{u}_{n-2}(0,xy))\notag\\
&\quad+y\widetilde{u}_{n-1}(0,y), \quad n \geq 3, \label{un(x,y)}
\end{align}
with $\widetilde{u}_1(x,y)=y$ and $\widetilde{u}_2(x,y)=y+y^2$. Note that \eqref{un(x,y)} is seen also to hold for $n=2$ if one defines $\widetilde{u}_0(x,y)=0$.   Multiplying both sides of \eqref{un(x,y)} by $t^n$, and summing over all $n \geq 2$, yields the functional equation
\begin{align}
u(t;x,y)&=yt+\frac{t}{x}(u(t;x,y)-u(t;0,y))+tu(t;x,y)
+\frac{xyt^2}{1-x}(u(t;0,y)-u(t;0,xy))+ytu(t;0,y). \label{2-21funeq}
\end{align}

We apply the kernel method to \eqref{2-21funeq} and let $x=t/(1-t)$ to obtain
\begin{align}
u(t;0,y)&=\frac{yt(1-2t)}{(1-t)(1-2t-yt+yt^2)}-\frac{yt^3}{(1-t)(1-2t-yt+yt^2)}u\left(t;0,\frac{yt}{1-t}\right). \label{u(t;0,y)}
\end{align}
Iteration of \eqref{u(t;0,y)}, where $|t|<a$ for some $a>0$ suitably small, then yields
\begin{align}
u(t;0,y)&=t(1-2t)\sum_{i\geq0}\frac{(-1)^iy^{i+1}t^{\frac{i(i+7)}{2}}}{\prod_{j=0}^i((1-2t)(1-t)^{j+1}-y(1-t)^2t^{j+1})}.\label{u(t;0,y)2}
\end{align}
By \eqref{2-21funeq}, we have
$$u(t;x,y)=\frac{xyt}{x-t-xt}-\frac{t((1-x)(1-xy)-x^2yt)}{(1-x)(x-t-xt)}u(t;0,y)-\frac{x^2yt^2}{(1-x)(x-t-xt)}u(t;0,xy),$$
and applying \eqref{u(t;0,y)2} gives
\small{\begin{align}
u(t;x,y)&=\frac{xyt}{x-t-xt}-\frac{t^2(1-2t)((1-x)(1-xy)-x^2yt)}{(1-x)(x-t-xt)}\sum_{i\geq0}\frac{(-1)^iy^{i+1}t^{\frac{i(i+7)}{2}}}{\prod_{j=0}^i((1-2t)(1-t)^{j+1}-y(1-t)^2t^{j+1})}\notag\\
&\quad-\frac{x^2yt^3(1-2t)}{(1-x)(x-t-xt)}\sum_{i\geq0}\frac{(-1)^i(xy)^{i+1}t^{\frac{i(i+7)}{2}}}{\prod_{j=0}^i((1-2t)(1-t)^{j+1}-xy(1-t)^2t^{j+1})}.\label{utxyform}
\end{align}}\normalsize

We wish to find $u(t;1,1)=\sum_{n\geq1}c_n(2\text{-}21)t^n$, though it is not possible to substitute $x=1$ directly into \eqref{utxyform} as it leads to the indeterminate $0/0$.  Note, however, for each $t$ of sufficiently small absolute value, there is uniform convergence in the $x$-derivatives of the partial sums of the series expansion in \eqref{utxyform} for all $x$ in some finite open interval containing $x=1$.  By, for example, \cite[p.\,187, Theorem 12]{Wade}, we thus may interchange the summation and differentiation in \eqref{utxyform} in applying  L'H\^{o}pital's rule to obtain
\begin{align*}
u(t;1,1)&=\lim_{x\rightarrow1}u(t;x,1)\\
&=\frac{t}{1-2t}-t^3\lim_{x\rightarrow1}\frac{\partial}{\partial x}\bigg(x^2\sum_{i\geq0}\frac{(-1)^it^{\frac{i(i+7)}{2}}}{\prod_{j=0}^i((1-2t)(1-t)^{j+1}-(1-t)^2t^{j+1})}\\
&\quad-x^2\sum_{i\geq0}\frac{(-1)^ix^{i+1}t^{\frac{i(i+7)}{2}}}{\prod_{j=0}^i((1-2t)(1-t)^{j+1}-x(1-t)^2t^{j+1})}\bigg)\\
&=\frac{t}{1-2t}+t^3\sum_{i\geq0}\frac{(-1)^it^{\frac{i(i+7)}{2}}\left(i+1+\sum_{j=0}^{i}\frac{(1-t)^2t^{j+1}}{(1-2t)(1-t)^{j+1}-(1-t)^2t^{j+1}}\right)}{\prod_{j=0}^i((1-2t)(1-t)^{j+1}-(1-t)^2t^{j+1})}\\
&=\frac{t}{1-2t}+t^3\sum_{i\geq0}\frac{(-1)^it^{\frac{i(i+7)}{2}}(1-2t)\sum_{j=0}^i\frac{(1-t)^{j+1}}{(1-2t)(1-t)^{j+1}-(1-t)^2t^{j+1}}}{\prod_{j=0}^i((1-2t)(1-t)^{j+1}-(1-t)^2t^{j+1})}.
\end{align*}
Replacing $i$ with $i-1$ (and then $j$ with $j-1$) in the last expression yields \eqref{th2-21e1}.
\end{proof}

We now consider avoidance of 3-21.  Applying similar reasoning to that used in the proof of Lemma \ref{2-21lem1}, we obtain the following recurrences for $v_n(m,a)$.

\begin{lemma}\label{3-21lem1}
If $n \geq 3$ and $2 \leq m \leq n$, then
\begin{align}
v_n(m,a)&=v_{n-1}(m,a-1)+v_{n-1}(m,a)+v_{n-1}(m,m), \quad 1 \leq a \leq m-1, \label{2-211rec1}\\
v_n(m,m)&=v_{n-1}(m-1,m-1)+v_{n-1}(m,m-1)+v_{n-1}(m,m), \label{2-211rec2}
\end{align}
with $v_n(1,1)=1$ for all $n \geq 1$ and $v_2(2,1)=0$, $v_2(2,2)=1$.
\end{lemma}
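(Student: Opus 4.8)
The plan is to follow closely the argument used to prove Lemma~\ref{2-21lem1}, conditioning on the penultimate letter of an avoider. Let $\mathcal{V}_{n,m,a}$ denote the subset of $\mathcal{C}_n(3\text{-}21)$ enumerated by $v_n(m,a)$, take $\pi=\pi_1\cdots\pi_n\in\mathcal{V}_{n,m,a}$ with $n\geq3$, and write $\ell=\pi_{n-1}$.

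First suppose $1\leq a\leq m-1$, so that the Catalan condition $\pi_n\leq\pi_{n-1}+1$ forces $\ell\geq a-1$. The key observation, which parallels but slightly differs from the 2-21 case, is that $a<\ell<m$ is impossible: since $\pi_n=a<m$ and $\pi_{n-1}=\ell<m$, the largest letter $m$ must occur at some position $p\leq n-2$, and then $\pi_p=m>\ell>a$ together with the adjacent pair $\pi_{n-1}=\ell$, $\pi_n=a$ gives an occurrence of 3-21. Hence $\ell\in\{a-1,a,m\}$. If $\ell=a-1$ or $\ell=a$, deleting the final letter gives a member of $\mathcal{V}_{n-1,m,a-1}$ or $\mathcal{V}_{n-1,m,a}$, respectively, and this is reversible since appending $a$ to such a word creates no new 3-21 occurrence (the only new adjacency $(\pi_{n-1},\pi_n)$ being the ascent $(a-1,a)$ or the level $(a,a)$), which accounts for the first two terms of \eqref{2-211rec1}. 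The essential point is the case $\ell=m$: unlike for 2-21, the string $m,m,a$ is \emph{not} an occurrence of 3-21, so the letter $m$ may repeat, and deleting only the final letter $a$ yields an arbitrary member of $\mathcal{V}_{n-1,m,m}$; conversely, appending any $a\in[m-1]$ to such a word produces no 3-21 occurrence, since the only new adjacent pair $(\pi_{n-1},\pi_n)=(m,a)$ is a descent that would require a letter exceeding $m$ to its left in order to complete a 3-21. This gives the term $v_{n-1}(m,m)$ and establishes \eqref{2-211rec1}; it is precisely this step that replaces the $u_{n-2}(m-1,m-1)$ term of \eqref{2-211rec1x}.

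Next suppose $a=m$ with $m\geq2$. The Catalan condition forces $\ell\in\{m-1,m\}$. If $\ell=m$, deleting the final letter gives a member of $\mathcal{V}_{n-1,m,m}$, reversibly, contributing $v_{n-1}(m,m)$. If $\ell=m-1$, delete the final letter $m$ to obtain a word $\pi'$ ending in $m-1$; its largest letter is $m$ when $m$ already appears in $\pi'$, in which case $\pi'\in\mathcal{V}_{n-1,m,m-1}$, and is $m-1$ otherwise (i.e.\ when $m$ occurs only in the final position of $\pi$), in which case $\pi'\in\mathcal{V}_{n-1,m-1,m-1}$. These subcases are disjoint and exhaustive, and the passage is reversible in each: appending $m$ to a word ending in $m-1$ introduces only the new adjacent ascending pair $(m-1,m)$, which cannot be the ``21'' of a 3-21 occurrence, and the appended $m$, being last, cannot serve as the ``3''. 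Summing $v_{n-1}(m,m)$, $v_{n-1}(m,m-1)$ and $v_{n-1}(m-1,m-1)$ yields \eqref{2-211rec2}. Finally, $\mathcal{V}_{n,1,1}=\{1^n\}$ for all $n\geq1$, while $\mathcal{V}_{2,2,1}=\varnothing$ and $\mathcal{V}_{2,2,2}=\{12\}$, giving the stated initial conditions.

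The main obstacle is not any single computation but the careful verification, in both the forward and reverse directions, that the only 3-21 occurrences created or destroyed by adjusting the final letter are those whose ``21'' is the adjacent pair $(\pi_{n-1},\pi_n)$; once this is established, the exclusion of the range $a<\ell<m$ (via the forced early occurrence of the maximum) and the bookkeeping of whether the maximum repeats are routine. The contrast with the 2-21 analysis, in which $m,m,a$ \emph{is} a forbidden pattern, is exactly what produces the different shape of \eqref{2-211rec1} compared with \eqref{2-211rec1x}.
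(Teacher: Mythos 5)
Your proof is correct and is precisely the adaptation of the Lemma~\ref{2-21lem1} argument that the paper itself invokes (the paper gives no separate proof, saying only that similar reasoning applies); in particular, you correctly identify the one substantive change, namely that a penultimate letter $\ell=m$ no longer forces $m$ to occur uniquely, which turns the $u_{n-2}(m-1,m-1)$ term into $v_{n-1}(m,m)$.
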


There is the following formula for the generating function of $c_n(3\text{-}21)$.

\begin{theorem}\label{th3-21}
We have
\begin{equation}\label{th3-21e1m}
\sum_{n\geq1}c_n(3\text{-}21)t^n=\frac{t}{1-2t}-\sum_{i\geq1}\frac{(-1)^it^{2i}(1-t)^{\binom{i}{2}}(1-2t)\sum_{j=1}^i\frac{t^{j}}{(1-3t+t^2)(1-t)^{j-1}-(1-2t)t^{j}}}{\prod_{j=1}^i\left((1-3t+t^2)(1-t)^{j-1}-(1-2t)t^{j}\right)}.
\end{equation}
\end{theorem}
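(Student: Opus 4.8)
The plan is to run the argument of Theorem~\ref{th2-21} with the bookkeeping adapted to the recurrences of Lemma~\ref{3-21lem1}. Introduce $v_n(m;x)=\sum_{a=1}^{m}v_n(m,a)x^{m-a}$ for $1\le m\le n$, so that $v_n(m;0)=v_n(m,m)$ and $v_n(1;x)=1$, and set $v_n(m;x)=0$ for $m>n$. Summing \eqref{2-211rec1} against $x^{m-a}$ over $1\le a\le m-1$, adjoining the $a=m$ term governed by \eqref{2-211rec2}, performing the index shift $a\mapsto a-1$, and using the cancellation $v_n(m,m)-v_{n-1}(m,m-1)-v_{n-1}(m,m)=v_{n-1}(m-1,m-1)$, the two recurrences of Lemma~\ref{3-21lem1} collapse into
\[
v_n(m;x)=\tfrac1x\bigl(v_{n-1}(m;x)-v_{n-1}(m;0)\bigr)+v_{n-1}(m;x)+\tfrac{x-x^m}{1-x}\,v_{n-1}(m;0)+v_{n-1}(m-1;0),
\]
valid for $n\ge2$, $2\le m\le n$. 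This is exactly \eqref{un(m;x)} with the sole change that $u_{n-2}(m-1;0)$ becomes $v_{n-1}(m;0)$, so the remaining steps carry over almost verbatim. Multiplying by $y^m$, summing over $m\ge1$ (using $\sum_{m\ge2}\tfrac{x-x^m}{1-x}v_{n-1}(m;0)y^m=\tfrac{x\widetilde v_{n-1}(0,y)-\widetilde v_{n-1}(0,xy)}{1-x}$), then multiplying by $t^n$ and summing over $n\ge2$ (with $\widetilde v_0=0$, $\widetilde v_1(x,y)=y$), I obtain the functional equation
\[
v(t;x,y)=yt+\tfrac tx\bigl(v(t;x,y)-v(t;0,y)\bigr)+tv(t;x,y)+\tfrac{t\bigl(xv(t;0,y)-v(t;0,xy)\bigr)}{1-x}+ytv(t;0,y).
\]

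Next I apply the kernel method. The coefficient of $v(t;x,y)$ vanishes when $x=t/(1-t)$; substituting this value, clearing denominators, and simplifying the resulting linear relation between $v(t;0,y)$ and $v(t;0,ty/(1-t))$ yields
\[
v(t;0,y)=\frac{yt(1-2t)-t(1-t)\,v\!\left(t;0,\tfrac{ty}{1-t}\right)}{(1-3t+t^2)-yt(1-2t)}.
\]
Writing $y_i=t^iy/(1-t)^i$ and $D_j=(1-3t+t^2)-y_jt(1-2t)$, iteration of this identity (convergent for $|t|$ sufficiently small, as in the proof of Theorem~\ref{th2-21}) gives a series with $i$-th summand $(-1)^i\,t^i(1-t)^i\,y_i\,t(1-2t)\,/\prod_{j=0}^iD_j$. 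Since $t^i(1-t)^iy_i=t^{2i}y$, while $(1-t)^jD_j=(1-3t+t^2)(1-t)^j-yt^{j+1}(1-2t)$ forces $\prod_{j=0}^iD_j=(1-t)^{-\binom{i+1}{2}}\prod_{j=0}^i\bigl((1-3t+t^2)(1-t)^j-yt^{j+1}(1-2t)\bigr)$, this simplifies to
\[
v(t;0,y)=yt(1-2t)\sum_{i\ge0}\frac{(-1)^it^{2i}(1-t)^{\binom{i+1}{2}}}{\prod_{j=0}^i\bigl((1-3t+t^2)(1-t)^j-yt^{j+1}(1-2t)\bigr)}.
\]

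To finish, solve the functional equation for $v(t;x,y)$, obtaining $v(t;x,y)=\tfrac{xyt}{x-t-xt}+\tfrac{t[(x^2+x+xy-x^2y-1)v(t;0,y)-xv(t;0,xy)]}{(1-x)(x-t-xt)}$, and extract $\sum_{n\ge1}c_n(3\text{-}21)t^n=v(t;1,1)$ by setting $y=1$ and letting $x\to1$. At $x=y=1$ the bracketed numerator equals $v(t;0,1)-v(t;0,1)=0$ while $1-x$ also vanishes, so the limit is of type $0/0$; exactly as in the proof of Theorem~\ref{th2-21}, uniform convergence of the $x$-derivatives of the partial sums of the series for $v(t;0,x)$ on a neighborhood of $x=1$ (for $|t|$ small, by \cite[p.\,187, Theorem 12]{Wade}) allows term-by-term differentiation when applying L'H\^opital's rule. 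Differentiating $s\mapsto s/\prod_{j=0}^iQ_j(s)$ with $Q_j(s)=(1-3t+t^2)(1-t)^j-st^{j+1}(1-2t)$, via the logarithmic derivative, contributes the factor $1+(1-2t)\sum_{j=0}^it^{j+1}/Q_j(1)$, and after simplification
\[
v(t;1,1)=\frac t{1-2t}+t^2(1-2t)\sum_{i\ge0}\frac{(-1)^it^{2i}(1-t)^{\binom{i+1}{2}}\sum_{j=0}^i\frac{t^{j+1}}{(1-3t+t^2)(1-t)^j-(1-2t)t^{j+1}}}{\prod_{j=0}^i\bigl((1-3t+t^2)(1-t)^j-(1-2t)t^{j+1}\bigr)},
\]
and replacing $i$ by $i-1$ and $j$ by $j-1$ gives \eqref{th3-21e1m}. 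The main obstacle is this $x\to1$ limit: one must justify the interchange of differentiation and summation, carry the logarithmic-derivative bookkeeping of $\prod_jQ_j(s)^{-1}$ through without slips, and confirm that the numerator actually vanishes at $x=1$ (which it does, since $x^2+x+xy-x^2y-1$ equals $2x-1$ when $y=1$, so the bracket becomes $(2x-1)v(t;0,1)-xv(t;0,x)$, zero at $x=1$). A lesser, routine hazard is keeping track of all the $m=1$ and $n=1$ boundary terms when passing among the three layers of generating functions.
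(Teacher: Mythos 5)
Your proposal is correct and follows essentially the same route as the paper: the same polynomial refinement $v_n(m;x)$, the same functional equation (the paper's \eqref{fneqn3-21}, just with the $v(t;0,y)$ terms grouped differently), the kernel substitution $x=t/(1-t)$, iteration to get \eqref{v(t,0,y)2}, and the same $x\to1$ L'H\^opital limit with term-by-term differentiation justified by uniform convergence. All the intermediate expressions you derive agree with the paper's after elementary algebraic regrouping, and the final index shift reproduces \eqref{th3-21e1m} exactly.
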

\begin{proof}
Let $v_n(m;x)=\sum_{a=1}^mv_n(m,a)x^{m-a}$ for $n\geq m \geq 1$.  Then \eqref{2-211rec1} and \eqref{2-211rec2} together imply for $n \geq 3$ the recurrence
\begin{align*}
v_n(m;x)&=\frac{1}{x}(v_{n-1}(m;x)-v_{n-1}(m;0))+v_{n-1}(m;x)-v_{n-1}(m;0)
+\frac{1-x^m}{1-x}v_{n-1}(m;0)\\
&\quad+v_{n-1}(m-1;0), \quad 2 \leq m \leq n,
\end{align*}
with $v_n(1;x)=1$ for all $n \geq 1$ and $v_2(2;x)=1$.  This implies
\begin{align*}
\widetilde{v}_n(x,y)&=\frac{1}{x}(\widetilde{v}_{n-1}(x,y)-\widetilde{v}_{n-1}(0,y))+\widetilde{v}_{n-1}(x,y)-\widetilde{v}_{n-1}(0,y)
+\frac{1}{1-x}(\widetilde{v}_{n-1}(0,y)-\widetilde{v}_{n-1}(0,xy))\\
&\quad+y\widetilde{v}_{n-1}(0,y), \quad n \geq 2,
\end{align*}
with $\widetilde{v}_1(x,y)=y$.  Multiplying both sides of the last equality by $x^n$, and summing over $n \geq 2$, gives
\begin{align}
v(t;x,y)&=yt+\frac{t}{x}(v(t;x,y)-v(t;0,y))+tv(t;x,y)-tv(t;0,y)\notag\\
&\quad+\frac{t}{1-x}(v(t;0,y)-v(t;0,xy))+ytv(t;0,y), \label{fneqn3-21}
\end{align}
and taking $x=t/(1-t)$ in \eqref{fneqn3-21}, we obtain
\begin{align}
v(t;0,y)&=\frac{yt(1-2t)}{1-3t+t^2-yt(1-2t)}-\frac{t(1-t)}{1-3t+t^2-yt(1-2t)}v\left(t;0,\frac{yt}{1-t}\right). \label{v(t;0,y)}
\end{align}
Iteration of \eqref{v(t;0,y)} implies for $t$ sufficiently close to zero the explicit formula
\begin{align}
v(t;0,y)&=yt(1-2t)\sum_{i\geq0}\frac{(-1)^it^{2i}}{\prod_{j=0}^{i}(1-3t+t^2-\frac{yt^{j+1}(1-2t)}{(1-t)^j})}. \label{v(t,0,y)2}
\end{align}

Solving for $v(t;x,y)$ in \eqref{fneqn3-21} gives
\begin{align*}
v(t;x,y)&=\frac{xyt}{x-t-xt}-\frac{t((1-x)(1-xy)-x^2)}{(1-x)(x-t-xt)}v(t;0,y)
-\frac{xt}{(1-x)(x-t-xt)}v(t;0,xy),
\end{align*}
and applying \eqref{v(t,0,y)2} yields
\begin{align}
v(t;x,y)&=\frac{xyt}{x-t-tx}-\frac{yt^2(1-2t)((1-x)(1-xy)-x^2)}{(1-x)(x-t-xt)}\sum_{i\geq0}\frac{(-1)^it^{2i}}{\prod_{j=0}^{i}(1-3t+t^2-\frac{yt^{j+1}(1-2t)}{(1-t)^j})}\notag\\
&\quad-\frac{x^2yt^2(1-2t)}{(1-x)(x-t-xt)}\sum_{i\geq0}\frac{(-1)^it^{2i}}{\prod_{j=0}^{i}(1-3t+t^2-\frac{xyt^{j+1}(1-2t)}{(1-t)^j})}.\label{vtxyform}
\end{align}
As in the proof of \eqref{th2-21e1} above, we take the limit as $x\rightarrow1$ in $v(t;x,1)$ to obtain a formula for $v(t;1,1)=\sum_{n\geq1}c_n(3\text{-}21)t^n$ using \eqref{vtxyform}.  This gives
\begin{align*}
v(t;1,1)&=\lim_{x\rightarrow1}v(t;x,1)\\
&=\frac{t}{1-2t}-t^2\lim_{x\rightarrow1}\frac{\partial}{\partial x}\bigg(x^2\sum_{i\geq0}\frac{(-1)^it^{2i}}{\prod_{j=0}^i\left(1-3t+t^2-\frac{(1-2t)t^{j+1}}{(1-t)^j}\right)}\\
&\quad-x^2\sum_{i\geq0}\frac{(-1)^it^{2i}}{\prod_{j=0}^i\left(1-3t+t^2-\frac{x(1-2t)t^{j+1}}{(1-t)^j}\right)}\bigg)\\
&=\frac{t}{1-2t}+\sum_{i\geq0}\frac{(-1)^it^{2i+2}(1-t)^{\binom{i+1}{2}}(1-2t)\sum_{j=0}^i\frac{t^{j+1}}{(1-3t+t^2)(1-t)^j-(1-2t)t^{j+1}}}{\prod_{j=0}^i\left((1-3t+t^2)(1-t)^j-(1-2t)t^{j+1}\right)},
\end{align*}
and replacing $i$ with $i-1$ in the last expression yields \eqref{th3-21e1m}.
\end{proof}

\subsection{The cases 1-11 and 2-11}

To prove our formula in the case 1-11, we introduce and derive formulas for three auxiliary generating functions before proceeding.  Recall that a \emph{level} refers to an occurrence of equal adjacent entries within a sequence.  For the first generating function, we let $H_m=H_m(t)$ for $m \geq1$ enumerate the (nonempty) $m$-ary words $w_1\cdots w_n$ for all $n \geq1$ that contain no levels and end in $m$ such that $w_{i+1}\leq w_i+1$ for each $i \in[n-1]$, with $H_0=0$.

Let $U_n=U_n(t)$ denote the $n$-th Chebyshev polynomial of the second kind defined recursively by $U_n=2tU_{n-1}-U_{n-2}$ for $n \geq 2$, with initial values $U_0=1$ and $U_1=2t$ (see, e.g., \cite{Ri}). We have the following explicit formula for $H_m$ in terms of Chebyshev polynomials.

\begin{lemma}\label{1-11lem1}
If $m \geq1$, then
\begin{equation}\label{1-11lem1e1}
H_m(t)=\frac{U_{m-1}(s)}{U_{m+1}(s)},
\end{equation}
where $s=\frac{\sqrt{1+t}}{2\sqrt{t}}$.
\end{lemma}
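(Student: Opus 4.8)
The plan is to set up a functional equation for the bivariate generating function that refines $H_m$ by the last letter, solve it, and then recognize the solution as a ratio of Chebyshev polynomials. First I would let $H(t,x)=\sum_{m\geq 1}H_m(t)x^m$, where $H_m$ counts the level-free words $w_1\cdots w_n$ over $[m]$ ending in $m$ with unit-increase constraint $w_{i+1}\le w_i+1$. Every such word ending in $m$ is obtained from a shorter such word ending in some letter $\ell$ by appending $m$, where the constraint $m\le \ell+1$ together with the no-level condition $m\neq \ell$ forces $\ell\in\{m,m+1,m+2,\dots\}$ minus nothing problematic — more precisely $\ell \geq m-1$ and $\ell\neq m$, i.e. $\ell=m-1$ or $\ell\geq m+1$. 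Accounting also for the length-one word $w=m$ (contributing $t$ to $H_m$), this gives the recurrence
\begin{equation*}
H_m = t + t\Bigl(H_{m-1} + \sum_{\ell\geq m+1}H_\ell\Bigr),\qquad m\geq 1,
\end{equation*}
with $H_0=0$. The tail sum $\sum_{\ell\geq m+1}H_\ell$ is the main structural feature to manage.

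Next I would convert this to a closed relation. Writing $G_m=\sum_{\ell\geq m}H_\ell$ (the relevant ``tail'' series, which converges for $|t|$ small), the recurrence becomes $H_m = t + tH_{m-1} + tG_{m+1}$, and subtracting consecutive instances, or equivalently summing appropriately, yields a second-order linear recurrence in $m$ with constant (in $m$) coefficients depending only on $t$. Concretely, one expects something of the shape $H_{m+1} = \frac{1}{t}H_m - \text{(correction)} - H_{m-1}$ after eliminating the tail; matching this against the Chebyshev three-term recurrence $U_{n}=2sU_{n-1}-U_{n-2}$ with $2s = \frac{1}{t} \cdot(\text{something})$ is what motivates the substitution $s=\frac{\sqrt{1+t}}{2\sqrt t}$, since then $2s=\sqrt{(1+t)/t}$ and $4s^2-? $ produces the needed coefficient. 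With the recurrence for $H_m$ in hand and the boundary data (the value of $H_1$, computable directly, plus the decay condition $H_m\to 0$ as $m\to\infty$ for small $t$, which picks out the correct solution of the two-dimensional solution space), the solution is forced to be a constant multiple of $U_{m-1}(s)$ divided by the growing solution, giving $H_m = c\cdot U_{m-1}(s)/U_{m+1}(s)$ for a constant $c=c(t)$ independent of $m$.

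Finally I would pin down $c$. Evaluating at $m=1$: $H_1$ counts level-free unit-increase words over $\{1\}$ ending in $1$, i.e. just the single word $1$, so $H_1=t$; alternatively one reads $H_1$ off the functional equation. Setting $t = U_0(s)/U_2(s)\cdot c = c/(4s^2-1)$ and using $4s^2-1 = \frac{1+t}{t}-1 = \frac{1}{t}$ gives $t = ct$, hence $c=1$, yielding \eqref{1-11lem1e1}. (One should double-check the boundary value using the recurrence rather than the bare count, since the tail term $G_2$ enters $H_1$; the consistency of the two computations is the natural sanity check.)

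The step I expect to be the main obstacle is eliminating the infinite tail $\sum_{\ell\geq m+1}H_\ell$ cleanly to obtain a genuine three-term recurrence in $m$ with the exact coefficients that match the Chebyshev recursion — in particular getting the algebra to produce precisely $2s=\sqrt{(1+t)/t}$ and verifying that the decaying-solution branch is the correct one (equivalently, that the series $H_m$ really is $O(r^m)$ for some $r<1$ when $t$ is small, so that the ratio $U_{m-1}(s)/U_{m+1}(s)$ — which does decay geometrically since $|s|>1$ for small positive $t$ — is the right asymptotic shape). Once the recurrence and the correct boundary behavior are established, identifying the solution with the Chebyshev ratio and fixing the constant are routine.
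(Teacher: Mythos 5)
Your opening recurrence is incorrect, and the error propagates through the entire argument. If you delete the final letter $m$ from a word counted by $H_m$, what remains is an $m$-ary word ending in $m-1$; such a word may still contain the letter $m$ earlier on, so it is \emph{not} counted by $H_{m-1}$, which by definition enumerates $(m-1)$-ary words. Moreover, no letter $\ell\geq m+1$ can occur in an $m$-ary word, so the tail $\sum_{\ell\geq m+1}H_\ell$ has no combinatorial meaning in this setting; already at $m=1$ your recurrence would force $\sum_{\ell\geq 2}H_\ell=0$, and the sanity check you yourself propose at the end (comparing $H_1=t$ with what the recurrence gives) fails for exactly this reason. The linear recurrence you derive, $H_{m+1}=H_m-\frac{t}{1+t}H_{m-1}$, is concretely false: it predicts $[t^2]H_3=[t^2]\frac{t}{1-t}-[t^2]\frac{t^2}{1+t}=1-1=0$, whereas the word $23$ shows $[t^2]H_3=1$ (in fact $H_3=\frac{t}{1-t-t^2}$, in agreement with $U_2(s)/U_4(s)$).

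The failure is structural rather than a bookkeeping slip. The target $H_m=U_{m-1}(s)/U_{m+1}(s)$ is a \emph{ratio} of two solutions of the Chebyshev three-term recurrence, not a single solution of it, and it tends coefficientwise to the nonzero limit $\frac{1-t-\sqrt{1-2t-3t^2}}{2t}$ (that is, $t$ times the Motzkin generating function) as $m\to\infty$; it does not decay to $0$, contrary to your claim that $U_{m-1}(s)/U_{m+1}(s)$ decays geometrically. So both halves of your plan --- finding a constant-coefficient linear three-term recurrence for $H_m$ and selecting the decaying solution --- are unavailable. What is actually needed is a nonlinear (M\"{o}bius-type) recurrence, $H_m=\frac{t(1+H_{m-1})}{1-tH_{m-1}}$, which the paper obtains by introducing the auxiliary series $H_m^*$ for words beginning with $m$ and decomposing a word according to its first occurrence of the letter $m$ (three cases: $m$ occurs only as the final letter; the word starts with $m$; the word has the form $w'mw''m$ with $w'$ nonempty and $(m-1)$-ary). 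The Chebyshev form then follows by induction on $m$ using standard identities, the linear recurrence living on the numerators and denominators separately. If you want to salvage a last-letter peeling argument, you must refine to a two-parameter array $H_{m,\ell}$ (level-free words over $[m]$ ending in $\ell$) and run the recurrence in $\ell$ for fixed $m$, in the spirit of the array $m_n(a)$ the paper uses later for the pattern $11\text{-}2$; that is a genuinely different and workable route, but it is not what is written here.
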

\begin{proof}
Let $H_m^*=H_m^*(t)$ denote the restriction of $H_m$ to those words starting with $m$.  Note first that $H_m^*=t+t(H_m-H_m^*)$ for $m \geq1$, which follows from observing that a word $\rho$ enumerated by $H_m^*$ is of the form $\rho=m\rho'$, where either $\rho'$ is empty or is nonempty and starts with a letter in $[m-1]$.  Further, we also have
\begin{equation}\label{1-11lem1e2}
H_m=tH_{m-1}+H_m^*+tH_{m-1}(H_m-H_m^*), \qquad m \geq 2,
\end{equation}
with $H_1=1$.  To see \eqref{1-11lem1e2}, consider the following cases on the form of $w=w_1\cdots w_n$ enumerated by $H_m$ where $m \geq2$:  (i) $w=w'm$, where $w'$ is $(m-1)$-ary, (ii) $w$ starts with $m$ or (iii) $w=w'mw''m$, where $w'$ and $w''$ are nonempty and $w'$ does not contain $m$. It is seen that cases (i)--(iii) account for the three respective terms in \eqref{1-11lem1e2}, by subtraction as $w''$ cannot start with $m$ in (iii).  Substituting $H_m^*=\frac{t(1+H_m)}{1+t}$ into \eqref{1-11lem1e2}, and solving for $H_m$, yields the recurrence
\begin{equation}\label{1-11lem1e3}
H_m=\frac{t(1+H_{m-1})}{1-tH_{m-1}}, \qquad m \geq 2.
\end{equation}
Note that \eqref{1-11lem1e1} is seen also to hold for $m=1$, as $U_0(s)=1$ and $U_2(s)=\frac{1}{t}$.  Proceeding inductively using \eqref{1-11lem1e3}, we then have for $m \geq 2$,
$$H_m(t)=\frac{t(U_m(s)+U_{m-2}(s))}{U_m(s)-tU_{m-2}(s)}=\frac{\sqrt{t(1+t)}U_{m-1}(s)}{U_m(s)-tU_{m-2}(s)}=\frac{U_{m-1}(s)}{U_{m+1}(s)},$$
as desired, where we have used the respective facts $\sqrt{(1+t)/t}U_{m-1}(s)=U_m(s)+U_{m-2}(s)$ and $\sqrt{t(1+t)}U_{m+1}(s)=U_m(s)-tU_{m-2}(s)$ in the last two equalities.
\end{proof}

Let $J_m=J_m(t)$ for $m \geq1$ denote the generating function for the number of $m$-ary words $w_1\cdots w_n$ (possibly empty) containing no levels such that $w_{i+1}\leq w_i+1$ for each $i \in [n-1]$, with $J_0=1$.

\begin{lemma}\label{1-11lem2}
If $m \geq 1$, then
\begin{equation}\label{1-11lem2e1}
J_m(t)=\frac{1}{U_{m+1}(s)}\left(\frac{1+t}{t}\right)^{(m+1)/2}.
\end{equation}
\end{lemma}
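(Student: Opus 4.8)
The statement to prove is the closed form $J_m(t) = \frac{1}{U_{m+1}(s)}\left(\frac{1+t}{t}\right)^{(m+1)/2}$ with $s = \frac{\sqrt{1+t}}{2\sqrt{t}}$, where $J_m$ counts level-free $m$-ary words obeying the unit-increase condition $w_{i+1} \le w_i + 1$. The natural approach, mirroring the proof of Lemma~\ref{1-11lem1}, is to set up a recurrence for $J_m$ in terms of $J_{m-1}$ (and the already-known quantities $H_m$, $H_m^*$), solve it inductively, and recognize the Chebyshev identities. First I would relate $J_m$ to $H_m$: every nonempty word counted by $J_m$ either contains the letter $m$ or does not. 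Words not containing $m$ are exactly the nonempty words counted by $J_{m-1}$, and words containing $m$ can be decomposed at the \emph{last} occurrence of $m$ as $w = u\,m\,v$, where $u$ is an $(m-1)$-ary level-free word not ending in $m$ (automatically true) — actually, more cleanly, decompose at the \emph{first} occurrence of $m$: $w = u\,m\,v$ where $u$ is a (possibly empty) $(m-1)$-ary level-free word with unit-increase property ending in nothing special, and then $m\,v$ is a word counted by $H_m^*$. This gives $J_m = J_{m-1} + J_{m-1}\cdot H_m^*$, i.e.
\begin{equation*}
J_m = J_{m-1}(1 + H_m^*), \qquad m \ge 1,
\end{equation*}
with $J_0 = 1$. (One should double-check the edge cases: the empty word is counted once in $J_m$, corresponding to the empty word in $J_{m-1}$ times the empty contribution; and $u$ empty is allowed.)

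Next I would plug in the value $H_m^* = \frac{t(1+H_m)}{1+t}$ established in the proof of Lemma~\ref{1-11lem1}, together with the formula $H_m = U_{m-1}(s)/U_{m+1}(s)$ from \eqref{1-11lem1e1}. A short computation gives
\begin{equation*}
1 + H_m^* = 1 + \frac{t}{1+t}\left(1 + \frac{U_{m-1}(s)}{U_{m+1}(s)}\right) = \frac{(1+t)U_{m+1}(s) + t\,U_{m+1}(s) + t\,U_{m-1}(s)}{(1+t)U_{m+1}(s)}.
\end{equation*}
Using $U_{m-1}(s) + U_{m+1}(s) = \frac{1}{t}\cdot\sqrt{t(1+t)}\,U_m(s)$ — equivalently $t(U_{m+1}(s)+U_{m-1}(s)) = \sqrt{t(1+t)}\,U_m(s)$, which is the Chebyshev three-term recurrence $U_{m+1}+U_{m-1}=2s\,U_m$ with $2s = \sqrt{(1+t)/t}$ — the numerator simplifies to $(1+t)U_{m+1}(s) + \sqrt{t(1+t)}\,U_m(s)$, so that
\begin{equation*}
1 + H_m^* = \frac{U_{m+1}(s) + \sqrt{\tfrac{t}{1+t}}\,U_m(s)}{U_{m+1}(s)} = \frac{\sqrt{\tfrac{t}{1+t}}\bigl(U_m(s) + \sqrt{\tfrac{1+t}{t}}\,U_{m+1}(s)\bigr)}{U_{m+1}(s)}.
\end{equation*}
Applying the recurrence once more, $U_m(s) + 2s\,U_{m+1}(s) = U_m(s) + (U_{m+2}(s)+U_m(s))$... here I must be careful with indices; the cleaner identity is $\sqrt{(1+t)/t}\,U_{m+1}(s) = 2s\,U_{m+1}(s) = U_{m+2}(s) + U_m(s)$, whence $U_m(s) + \sqrt{(1+t)/t}\,U_{m+1}(s) = U_{m+2}(s) + 2U_m(s)$ — this is not obviously a single Chebyshev term, so I expect the correct route is instead to write $1 + H_m^*$ directly as $\frac{U_{m+2}(s)}{U_{m+1}(s)}\cdot\sqrt{\tfrac{t}{1+t}}$ and verify this by clearing denominators and invoking $U_{m+2}(s) = 2s\,U_{m+1}(s) - U_m(s)$ with $2s=\sqrt{(1+t)/t}$. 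Indeed $\sqrt{\tfrac{t}{1+t}}\,U_{m+2}(s) = \sqrt{\tfrac{t}{1+t}}\bigl(\sqrt{\tfrac{1+t}{t}}\,U_{m+1}(s) - U_m(s)\bigr) = U_{m+1}(s) - \sqrt{\tfrac{t}{1+t}}\,U_m(s)$, which matches the numerator above after moving the $U_m$ term — so after fixing the sign bookkeeping this yields the clean telescoping form $1 + H_m^* = \sqrt{\tfrac{t}{1+t}}\cdot\frac{U_{m+2}(s)}{U_{m+1}(s)}$.

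Finally I would close the induction. With $J_m = J_{m-1}\cdot\sqrt{\tfrac{t}{1+t}}\cdot\frac{U_{m+2}(s)}{U_{m+1}(s)}$ — wait, the index on $H_m^*$ pairs with $J_{m-1}$, so more precisely $J_m = J_{m-1}(1+H_m^*)$ and I need $1+H_m^*$ in terms that telescope against the inductive hypothesis $J_{m-1} = \frac{1}{U_m(s)}\left(\tfrac{1+t}{t}\right)^{m/2}$. Multiplying, $J_m = \frac{1}{U_m(s)}\left(\tfrac{1+t}{t}\right)^{m/2}\cdot\sqrt{\tfrac{t}{1+t}}\cdot\frac{U_{m+1}(s)}{U_m(s)}$ if the correct form of $1+H_m^*$ turns out to be $\sqrt{\tfrac{t}{1+t}}\,U_{m+1}(s)/U_m(s)$ rather than the shifted version — the precise index shift is exactly what the routine computation pins down, and it must come out so that the $U_m(s)$ in the denominator of $J_{m-1}$ cancels and a fresh $U_{m+1}(s)$ lands in the denominator. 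The power of $(1+t)/t$ then goes from $m/2$ to $(m+1)/2$, giving $J_m = \frac{1}{U_{m+1}(s)}\left(\tfrac{1+t}{t}\right)^{(m+1)/2}$, as claimed. The base case $m=0$ reads $J_0 = \frac{1}{U_1(s)}\sqrt{\tfrac{1+t}{t}} = \frac{1}{2s}\sqrt{\tfrac{1+t}{t}} = 1$ since $2s = \sqrt{(1+t)/t}$, which checks out. The main obstacle is purely bookkeeping: getting the Chebyshev index shifts and the sign in the $H_m^*$ simplification exactly right so that the telescoping in the induction step is clean; the combinatorial decomposition $J_m = J_{m-1}(1+H_m^*)$ itself is straightforward once one decomposes at the first occurrence of the letter $m$.
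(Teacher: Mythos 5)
Your overall strategy (derive a one-step recurrence relating $J_m$ to $J_{m-1}$ via the auxiliary series of Lemma \ref{1-11lem1}, then telescope using the Chebyshev recurrence) is the same as the paper's, but the recurrence you actually derive, $J_m=J_{m-1}(1+H_m^*)$, is false, and the problem is not the index bookkeeping you flag at the end. Two things go wrong in the decomposition $w=u\,m\,v$ at the first occurrence of $m$. First, the prefix $u$ is not an arbitrary word counted by $J_{m-1}$: since the letter following $u$ is $m$, the condition $w_{i+1}\le w_i+1$ forces a nonempty $u$ to end in $m-1$, so the correct series for $u$ is $1+H_{m-1}$, not $J_{m-1}$. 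Second, the suffix $m\,v$ is not counted by $H_m^*$, because $H_m^*$ is a restriction of $H_m$ and hence counts only words \emph{ending} in $m$, whereas $v$ may end in any letter; the correct series for the suffix is $J_m^*$, which satisfies $J_m^*=t(J_m-J_m^*)$ and makes the recurrence implicit in $J_m$. A concrete check: for $m=2$ one has $J_2=\frac{1+t}{1-t}$ (the nonempty level-free $2$-ary words are exactly the two alternating words of each length), whereas your recurrence gives $(1+t)\bigl(1+\frac{t}{1-t^2}\bigr)=\frac{1+t-t^2}{1-t}$, which already misses the word $21$ at length two.

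The paper's proof repairs both points: it splits $w$ into the cases ($(m-1)$-ary), (starts with $m$), and ($w=w'mw''$ with $w'$ counted by $H_{m-1}$ and $w''$ by $J_m-J_m^*$), obtaining $J_m=J_{m-1}+J_m^*+tH_{m-1}(J_m-J_m^*)$; substituting $J_m^*=\frac{t}{1+t}J_m$ and solving yields $J_m=\dfrac{J_{m-1}}{1-\frac{t}{1+t}(1+H_{m-1})}$, after which the Chebyshev telescoping you envisage does go through (note the index is $H_{m-1}$, not $H_m$, and the correction factor is $\bigl(1-H_{m-1}^*\bigr)^{-1}$ rather than $1+H_m^*$). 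Your base-case check and the general Chebyshev manipulations are fine in spirit, but they cannot rescue the argument until the recurrence itself is corrected.
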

\begin{proof}
Let $J_m^*=J_m^*(t)$ denote the restriction of $J_m$ to those (nonempty) words starting with $m$.  Then we have $J_m^*=t(J_m-J_m^*)$ for $m \geq 1$.  To realize this, note that a word $\rho$ enumerated by $J_m^*$ is of the form $\rho=m\rho'$, where $\rho'$ is either empty or is non-empty and starts with a letter in $[m-1]$, with $\rho'$ in the latter case being enumerated by $J_m-J_m^*-1$. We also have the recurrence
\begin{equation}\label{1-11lem2e2}
J_m=J_{m-1}+J_m^*+tH_{m-1}(J_m-J_m^*), \qquad m \geq 2,
\end{equation}
with $J_1=1+t$.  To show \eqref{1-11lem2e2}, note first that a word $w$ enumerated by $J_m$ for $m \geq 2$ either is $(m-1)$-ary, starts with $m$ or has the form $w=w'mw''$, where $w'$ and $w''$ are $(m-1)$- and $m$-ary, respectively.  The section $w'$ of $w$ must be nonempty, whereas $w''$ could be empty, and thus $w'$ and $w''$ are precisely of the form enumerated by $H_{m-1}$ and $J_m-J_m^*$, respectively. This accounts for the third term in \eqref{1-11lem2e2}, and implies the formula, where the extra $t$ factor accounts for the intermediate letter $m$ in the decomposition of $w$.

Substituting $J_m^*=\frac{t}{1+t}J_m$ into \eqref{1-11lem2e2}, and solving for $J_m$, yields
\begin{equation}\label{1-11lem2e3}
J_m=\frac{J_{m-1}}{1-\frac{t}{1+t}(1+H_{m-1})}, \qquad m \geq1,
\end{equation}
where the $m=1$ case follows from the initial values of $H_m$ and $J_m$.  By \eqref{1-11lem1e1} and \eqref{1-11lem2e3}, and the fact $U_m(s)+U_{m-2}(s)=\sqrt{(1+t)/t}U_{m-1}(s)$, we have
\begin{equation}\label{1-11lem2e4}
J_m(t)=\frac{J_{m-1}(t)}{1-\frac{t}{1+t}\left(1+\frac{U_{m-2}(s)}{U_m(s)}\right)}=\frac{U_m(s)J_{m-1}(t)}{U_m(s)-\frac{\sqrt{t}}{\sqrt{1+t}}U_{m-1}(s)}=\frac{U_m(s)J_{m-1}(t)}{\frac{\sqrt{t}}{\sqrt{1+t}}U_{m+1}(s)}, \quad m \geq 1,
\end{equation}
with $J_0(t)=1$.  Upon applying \eqref{1-11lem2e4} repeatedly, and observing a telescoping product, we obtain \eqref{1-11lem2e1}.
\end{proof}

Let $\mathcal{C}_n'$ denote the subset of $\mathcal{C}_n$ whose members contain no  levels (i.e., they avoid the 11 subword) and let $\mathcal{C}_{n,m}'$ be the subset of $\mathcal{C}_n'$ whose members have largest letter $m$ (i.e., that contain $m$ distinct letters).  Given $m \geq 2$ and $n \geq m-1$, let $\mathcal{C}_{n,m}''$ be the subset of $\mathcal{C}_{n,m-1}'$ whose members end in $m-1$.  Let $G_m=G_{m}(t)$ for $m \geq 2$ denote the generating function enumerating the members of $\mathcal{C}_{n,m}''$ for all $n \geq m-1$, with $G_1=1$.  We have the following explicit formula for $G_m$.

\begin{lemma}\label{1-11lem3}
If $m \geq1$, then
\begin{equation}\label{1-11lem3e1}
G_m(t)=\frac{1}{U_m(s)}\left(\frac{t}{1+t}\right)^{(m-2)/2}.
\end{equation}
\end{lemma}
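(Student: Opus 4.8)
The plan is to reduce Lemma~\ref{1-11lem3} to Lemma~\ref{1-11lem1} by means of one further auxiliary generating function. Let $D_m=D_m(t)$ enumerate the nonempty Catalan words with no levels whose largest letter is at most $m$, with $D_0=0$. The first step is to show $D_m=\frac{U_{m-1}(s)}{U_{m+1}(s)}$ for all $m\ge1$. Any nonempty Catalan word $w$ with no levels may be written uniquely as $w=1A_11A_2\cdots1A_{k-1}1A_k$ for some $k\ge1$, where $A_i$ is the factor of $w$ strictly between its $i$-th and $(i+1)$-st copies of $1$ and $A_k$ the factor following the last $1$; since $w$ has no levels, each of $A_1,\dots,A_{k-1}$ is nonempty, while $A_k$ may or may not be. Each nonempty block begins with $2$, and subtracting $1$ from every letter turns it into an arbitrary nonempty Catalan word with no levels; moreover $w$ has largest letter at most $m$ exactly when every shifted block has largest letter at most $m-1$. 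Collecting terms gives $D_m=\sum_{k\ge1}t^kD_{m-1}^{\,k-1}(1+D_{m-1})=\frac{t(1+D_{m-1})}{1-tD_{m-1}}$, which is recurrence~\eqref{1-11lem1e3}; together with the initial value $D_1=t$ (the only admissible word being $1$), the computation carried out in the proof of Lemma~\ref{1-11lem1} yields $D_m=H_m=\frac{U_{m-1}(s)}{U_{m+1}(s)}$.

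The second step is to establish $G_m=\frac{tG_{m-1}}{1-tD_{m-2}}$ for $m\ge2$. Fix $m\ge3$ and take $w\in\mathcal{C}_{n,m}''$, namely a Catalan word with no levels, largest letter $m-1$, ending in $m-1$; decompose $w=1A_11\cdots1A_{k-1}1A_k$ as above. Because $w$ ends in $m-1\ge2$ and has no levels, \emph{every} block $A_i$ is now nonempty and begins with $2$. For $i<k$, the shifted block $A_i-1$ is a nonempty Catalan word with no levels and largest letter at most $m-2$ (since $w$ has largest letter $m-1$), and every such word arises, so the blocks $A_1,\dots,A_{k-1}$ contribute a factor $D_{m-2}^{\,k-1}$. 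The last block $A_k$ begins with $2$ and ends with $m-1$, so $A_k-1$ begins with $1$, ends with $m-2$, and, having only unit increases, takes every value in $\{1,\dots,m-2\}$; thus $A_k-1$ is a Catalan word with no levels, largest letter $m-2$, ending in $m-2$, i.e.\ a word of the type enumerated by $G_{m-1}$, and conversely any such word occurs. Since a family of blocks of these kinds reassembles (by re-inserting the $1$'s) into a member of $\mathcal{C}_{n,m}''$ — the Catalan, no-level, largest-letter, and final-letter conditions are readily verified — summing over $k\ge1$ gives $G_m=\sum_{k\ge1}t^kD_{m-2}^{\,k-1}G_{m-1}=\frac{tG_{m-1}}{1-tD_{m-2}}$. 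The degenerate case $m=2$, where $\mathcal{C}_{n,2}''=\{1\}$ forces $G_2=t$, also obeys this formula since $D_0=0$.

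Finally I would solve the recurrence. Substituting $D_{m-2}=\frac{U_{m-3}(s)}{U_{m-1}(s)}$ and applying the identity $\sqrt{t(1+t)}\,U_m(s)=U_{m-1}(s)-tU_{m-3}(s)$ (an index shift of the one used in the proof of Lemma~\ref{1-11lem1}), one finds $1-tD_{m-2}=\sqrt{t(1+t)}\,U_m(s)/U_{m-1}(s)$, whence
\[
G_m=\left(\frac{t}{1+t}\right)^{1/2}\frac{U_{m-1}(s)}{U_m(s)}\,G_{m-1},\qquad m\ge2 .
\]
Iterating down to $G_1=1$ and telescoping the resulting product of Chebyshev ratios gives $G_m=\left(\frac{t}{1+t}\right)^{(m-1)/2}\frac{U_1(s)}{U_m(s)}$, and since $U_1(s)=2s=\left(\frac{t}{1+t}\right)^{-1/2}$ this is precisely~\eqref{1-11lem3e1}. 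The main obstacle is the block bookkeeping in the second step: one must use the no-level condition to guarantee that \emph{every} block — in particular the last one — is nonempty, and then check that the shifted final block really does realize all of $\{1,\dots,m-2\}$, so that it is governed by $G_{m-1}$ rather than by the larger generating function $D_{m-1}$.
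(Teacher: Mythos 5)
Your proof is correct, but it reaches the key recurrence $G_m=\frac{tG_{m-1}}{1-tH_{m-2}}$ by a genuinely different decomposition than the paper's. The paper cuts $w\in\mathcal{C}_{n,m}''$ at the occurrences of its largest letter $m-1$, writing $w=w^{(1)}(m-1)\cdots w^{(k)}(m-1)$; the prefix $w^{(1)}$ is then literally a word counted by $G_{m-1}$ and each later gap is literally a nonempty $(m-2)$-ary no-level word ending in $m-2$, i.e.\ a word counted by $H_{m-2}$, so the recurrence falls out with no auxiliary lemma. You instead cut at the occurrences of the smallest letter $1$, which forces you to first identify your new series $D_m$ (nonempty no-level Catalan words with largest letter at most $m$) with $H_m$ by checking that both satisfy recurrence \eqref{1-11lem1e3} with the same initial value $t$; only then do the interior blocks contribute $D_{m-2}=H_{m-2}$ and the final block $G_{m-1}$. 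Your block bookkeeping is sound --- in particular the observations that the no-level condition together with the final letter $m-1\geq2$ force every block to be nonempty, and that the shifted last block must attain all of $1,\ldots,m-2$ and hence is governed by $G_{m-1}$ rather than $D_{m-2}$ --- and your Chebyshev telescoping at the end agrees with the paper's. The cost of your route is the extra step $D_m=H_m$; the payoff is that this identity (no-level Catalan words with maximum at most $m$ are equinumerous with the no-level $m$-ary adjacency words ending in $m$) is a mildly interesting fact in its own right that the paper never makes explicit.
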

\begin{proof}
The equality is clear for $m=1$, so assume $m \geq 2$.  Let $w \in \mathcal{C}_{n,m}''$ be decomposed as $w=w^{(1)}(m-1)\cdots w^{(k)}(m-1)$ for some $k \geq 1$, where each section $w^{(j)}$ does not contain $m-1$.  Note that $w \in \mathcal{C}_{n,m}''$ where $m\geq 3$ if and only if $w^{(1)}\in\mathcal{C}_{\ell,m-1}''$ for some $\ell \geq m-2$ and each $w^{(j)}$ for $2 \leq j \leq k$ is of the form enumerated by $H_{m-2}$.  Thus, by definitions of the generating functions and \eqref{1-11lem1e1}, we have
\begin{align}
G_m(t)&=\sum_{k\geq1}t^kG_{m-1}(t)(H_{m-2}(t))^{k-1}=\frac{tG_{m-1}(t)}{1-tH_{m-2}(t)}=\frac{tU_{m-1}(s)G_{m-1}(t)}{U_{m-1}(s)-tU_{m-3}(s)}\notag\\
&=\frac{\sqrt{t}U_{m-1}(s)G_{m-1}(t)}{\sqrt{1+t}U_{m}(s)}, \qquad m \geq2, \label{1-11lem3e2}
\end{align}
with $G_1(t)=1$, where in the last equality, we used the fact $\sqrt{t(1+t)}U_m(s)=U_{m-1}(s)-tU_{m-3}(s)$.  Iterating \eqref{1-11lem3e2}, and observing a telescoping product, yields \eqref{1-11lem3e1}.
\end{proof}

Now let $q_n(m)=|\mathcal{C}_{n,m}'|$ for $n \geq m \geq 1$.  Define $Q_m=Q_m(t)=\sum_{n \geq m}q_n(m)t^n$ for $m \geq1$.  Using the prior results, we may obtain a simple explicit formula for $Q_m$ in terms of Chebyshev polynomials.

\begin{lemma}\label{1-11lem4}
If $m \geq1$, then
\begin{equation}\label{1-11lem4e1}
Q_m(t)=\frac{\sqrt{1+t}}{\sqrt{t}U_m(s)U_{m+1}(s)}.
\end{equation}
\end{lemma}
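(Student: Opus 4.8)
The plan is to decompose a level-free Catalan word of largest letter $m$ at the position of its first occurrence of the letter $m$; this expresses $Q_m$ in terms of the auxiliary generating functions $G_m$, $J_m$ and $J_m^*$ already computed, after which the claimed formula drops out of the Chebyshev identities used above. I would first dispose of the base case $m=1$: here $\mathcal{C}_{n,1}'$ equals $\{1\}$ for $n=1$ and is empty for $n\geq2$, so $Q_1(t)=t$, which agrees with the right-hand side of \eqref{1-11lem4e1} since $U_1(s)=2s=\sqrt{(1+t)/t}$ and $U_2(s)=4s^2-1=1/t$.

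Now fix $m\geq2$, take $w=w_1\cdots w_n\in\mathcal{C}_{n,m}'$, and let $i$ be the least index with $w_i=m$. Since $w_1=1$ and $w_{j+1}\leq w_j+1$, reaching the value $m$ forces $i\geq m\geq2$, and moreover $w_{i-1}=m-1$ (it is at least $m-1$ by the growth rule, less than $m$ by minimality of $i$, and at most $m$ because $m$ is the largest letter). Write $w=w'\,m\,w''$ with $w'=w_1\cdots w_{i-1}$ and $w''=w_{i+1}\cdots w_n$. Then $w'$ is a level-free Catalan word of length $i-1\geq m-1$ which ends in $m-1$ and contains no letter exceeding $m-1$ (so its largest letter is precisely $m-1$), i.e. $w'\in\mathcal{C}_{i-1,m}''$, a word enumerated by $G_m$. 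On the other hand $w''$ is an $m$-ary, level-free word satisfying $w''_{j+1}\leq w''_j+1$ that is either empty or begins with a letter of $[m-1]$ (it cannot begin with $m$, as $w$ has no level and no letter exceeding $m$), and such words are exactly those enumerated by $J_m-J_m^*$. Conversely, any pair $(w',w'')$ of this kind reassembles to a member of $\mathcal{C}_{n,m}'$ whose first $m$ occurs at the middle position, the relations $w_{i-1}=m-1<m$ and $w_i=m>w''_1$ guaranteeing that no level or growth violation is introduced at either junction. Hence the decomposition is a bijection, and recording the middle letter $m$ by a factor $t$ yields
\[
Q_m(t)=t\,G_m(t)\bigl(J_m(t)-J_m^*(t)\bigr),\qquad m\geq2.
\]

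To finish, I would invoke $J_m^*=\frac{t}{1+t}J_m$ (established within the proof of Lemma~\ref{1-11lem2}), which gives $J_m-J_m^*=\frac{1}{1+t}J_m$, and then use \eqref{1-11lem3e1} and \eqref{1-11lem2e1} to obtain $G_m(t)J_m(t)=\frac{1}{U_m(s)U_{m+1}(s)}\left(\frac{1+t}{t}\right)^{3/2}$, the powers of $t/(1+t)$ contributed by $G_m$ and $J_m$ combining as $-\frac{m-2}{2}+\frac{m+1}{2}=\frac{3}{2}$. Altogether,
\[
Q_m(t)=\frac{t}{1+t}\,G_m(t)J_m(t)=\frac{t}{1+t}\cdot\frac{1}{U_m(s)U_{m+1}(s)}\left(\frac{1+t}{t}\right)^{3/2}=\frac{\sqrt{1+t}}{\sqrt{t}\,U_m(s)U_{m+1}(s)},
\]
which is \eqref{1-11lem4e1}. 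The step demanding genuine care is the bijective one: one must verify that the forced value $w_{i-1}=m-1$ really places $w'$ in $\mathcal{C}_{i-1,m}''$ (in particular that $w'$ attains largest letter exactly $m-1$) and that splitting off the single letter $m$ together with $w''$ loses no constraint on either side; once that is in hand, only the elementary exponent bookkeeping above remains.
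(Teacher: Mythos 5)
Your proof is correct, and it takes a mildly different route from the paper's. The paper decomposes a member of $\mathcal{C}_{n,m}'$ at \emph{every} occurrence of $m$, writing $w=w^{(1)}mw^{(2)}m\cdots w^{(k)}mw^{(k+1)}$ with $w^{(1)}$ counted by $G_m$, the interior sections by $H_{m-1}$, and the tail by $J_{m-1}$; summing the resulting geometric series gives $Q_m=\frac{tG_mJ_{m-1}}{1-tH_{m-1}}$, which is then simplified via the Chebyshev identities. You instead split only at the \emph{first} occurrence of $m$, obtaining $Q_m=tG_m(J_m-J_m^*)=\frac{t}{1+t}G_mJ_m$, which bypasses $H_{m-1}$ and the geometric-series step entirely and makes the final exponent bookkeeping a one-line computation. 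The two expressions are reconciled by the recurrence \eqref{1-11lem2e3}, since $\frac{J_m}{1+t}=\frac{J_{m-1}}{1-tH_{m-1}}$, so nothing is lost either way; your version is marginally cleaner at the cost of having to argue (as you correctly do) that the suffix after the first $m$ is exactly an object counted by $J_m-J_m^*$, i.e.\ an $m$-ary level-free word that is empty or begins with a letter of $[m-1]$. Your verification of the junction conditions and of the fact that $w'$ lands in $\mathcal{C}_{i-1,m}''$ is sound, as is the base case $m=1$.
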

\begin{proof}
We may assume $m \geq2$, the formula being clear for $m=1$ as $Q_1=t$. We decompose $w \in \mathcal{C}_{n,m}'$ as $w=w^{(1)}mw^{(2)}m\cdots w^{(k)}mw^{(k+1)}$ for some $k \geq 1$, where no $w^{(j)}$ contains $m$ or any levels and $w^{(j)}$ for each $j \in [k]$ is nonempty and ends in $m-1$.  By the definitions of the various generating functions, we obtain
$$Q_m=\frac{tG_mJ_{m-1}}{1-tH_{m-1}}, \qquad m \geq 2,$$
where the extra factor of $t$ in the numerator accounts for the $m$ directly following the section $w^{(1)}$.
Thus, by \eqref{1-11lem1e1}, \eqref{1-11lem2e1} and \eqref{1-11lem3e1}, we get
$$Q_m(t)=\frac{\frac{t}{U_m(s)}\left(\frac{t}{1+t}\right)^{(m-2)/2}\cdot\frac{1}{U_m(s)}\left(\frac{1+t}{t}\right)^{m/2}}{1-t\frac{U_{m-2}(s)}{U_m(s)}}=\frac{1+t}{U_m(s)(U_m(s)-tU_{m-2}(s))}=\frac{\sqrt{1+t}}{\sqrt{t}U_m(s)U_{m+1}(s)},$$
as desired.
\end{proof}

It is now possible to ascertain a formula for the generating function of $c_n(1\text{-}11)$.

\begin{theorem}\label{1-11th1}
We have
\begin{equation}\label{1-11th1e1}
\sum_{n\geq1}c_n(1\text{-}11)t^n=\sum_{m\geq1}\frac{(1+t)^{m+1}}{\sqrt{t(1+t)}U_m\left(\frac{\sqrt{1+t}}{2\sqrt{t}}\right)U_{m+1}\left(\frac{\sqrt{1+t}}{2\sqrt{t}}\right)}.
\end{equation}
\end{theorem}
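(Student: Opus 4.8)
The plan is to reduce the enumeration of $\mathcal{C}_n(1\text{-}11)$ to that of the levelless Catalan words counted by the generating functions $Q_m(t)$ of Lemma~\ref{1-11lem4}, by describing exactly where levels may sit in a word avoiding $1\text{-}11$. The starting point is the elementary structural fact that in any Catalan word the first occurrence of a letter $v\geq 2$ is immediately preceded by $v-1$: if $p$ is the first index with $\pi_p=v$, then $\pi_{p-1}\geq v-1$, and $\pi_{p-1}\geq v$ is impossible, for if $r<p$ is the last index with $\pi_r<v$ then $\pi_{r+1}\leq\pi_r+1\leq v$ forces $\pi_{r+1}=v$ with $r+1<p$, contradicting the minimality of $p$.

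Next I would prove the characterization: $\pi\in\mathcal{C}_n$ avoids $1\text{-}11$ if and only if every level $\pi_j=\pi_{j+1}$ of $\pi$ occurs at the first appearance of the letter $\pi_j$ (that is, $\pi_j\notin\{\pi_1,\ldots,\pi_{j-1}\}$); in that case each letter supports at most one level of $\pi$, since a letter with two levels, or with a level preceded by an equal letter, immediately yields an occurrence of $1\text{-}11$. One direction is just the definition of containment and the other is immediate from it.

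With the characterization available, I would fix $m\geq 1$ and set up a bijection between the members of $\mathcal{C}_n(1\text{-}11)$ with largest letter $m$ and the pairs $(\sigma,S)$, where $\sigma$ is a levelless Catalan word with largest letter $m$ and $S\subseteq[m]$. Given such a $\pi$, let $S$ be the set of letters at which $\pi$ has a level and delete the second (adjacent, first-appearance) entry of each such pair; since deleting a repeated letter from a Catalan word again gives a Catalan word with the same letter set, and since removing all the first-appearance repetitions destroys every level without creating a new one, the result $\sigma$ is a levelless Catalan word with largest letter $m$ and length $n-|S|$. Conversely, from $(\sigma,S)$ one duplicates the first occurrence in $\sigma$ of each letter in $S$; by the structural fact above (and $\pi_1=1$ for the letter $1$) each duplication preserves the Catalan condition, and the resulting word avoids $1\text{-}11$ because each newly created level lies at a first appearance while $\sigma$ itself had no level. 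These operations are mutually inverse, so the number of members of $\mathcal{C}_n(1\text{-}11)$ with largest letter $m$ equals $\sum_{k=0}^m\binom{m}{k}q_{n-k}(m)$, where $q_n(m)=|\mathcal{C}_{n,m}'|$; hence the generating function for this subclass is $(1+t)^mQ_m(t)$.

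Summing over $m\geq 1$ and substituting the formula of Lemma~\ref{1-11lem4}, with $s=\frac{\sqrt{1+t}}{2\sqrt{t}}$, gives
$$\sum_{n\geq1}c_n(1\text{-}11)t^n=\sum_{m\geq1}(1+t)^mQ_m(t)=\sum_{m\geq1}\frac{(1+t)^m\sqrt{1+t}}{\sqrt{t}\,U_m(s)U_{m+1}(s)}=\sum_{m\geq1}\frac{(1+t)^{m+1}}{\sqrt{t(1+t)}\,U_m(s)U_{m+1}(s)},$$
which is \eqref{1-11th1e1}. The substance of the argument is entirely in the first three paragraphs — in particular verifying that duplication and deletion of first-appearance letters disturb neither the Catalan condition nor the avoidance of $1\text{-}11$ and are genuinely inverse to each other — whereas the summation and the final simplification are routine.
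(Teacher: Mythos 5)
Your proposal is correct and follows essentially the same route as the paper: both characterize $1\text{-}11$-avoiders as levelless Catalan words in which some subset of the distinct letters has its first occurrence duplicated, deduce that the class with largest letter $m$ has generating function $(1+t)^mQ_m(t)$, and sum over $m$ using Lemma~\ref{1-11lem4}. Your write-up merely makes the insertion/deletion bijection and the underlying characterization of where levels can occur slightly more explicit than the paper does.
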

\begin{proof}
First, note that members of $\mathcal{C}_n(1\text{-}11)$ can be obtained from those in $\mathcal{C}_{i,j}'$ for various $i$ and $j$ by choosing some subset of the distinct letters to have an initial run of length two.  That is, given $\sigma \in \mathcal{C}_{i,j}'$, we select $n-i$ of the $j$ distinct letters of $\sigma$ and for each chosen letter $x$, we insert a second $x$ just following the first occurrence of $x$ within $\sigma$.  It is seen that all members of $\mathcal{C}_n(1\text{-}11)$ arise uniquely in this manner as $i$ and $j$ vary and thus
$$c_n(1\text{-}11)=\sum_{i=1}^n\sum_{j=1}^n\binom{j}{n-i}q_i(j).$$
Note that the inner sum in this expression can only be nonzero if $i \leq n \leq 2i$, as $j \leq i$ is required, and hence the formula may be rewritten as
$$c_n(1\text{-}11)=\sum_{i=\lfloor\frac{n+1}{2}\rfloor}^n\sum_{j=n-i}^i\binom{j}{n-i}q_i(j).$$
Thus, we have
\begin{align*}
\sum_{n\geq1}c_n(1\text{-}11)t^n
&=\sum_{n\geq1}t^n\sum_{i=\lfloor\frac{n+1}{2}\rfloor}^n\sum_{j=n-i}^{i}\binom{j}{n-i}q_i(j)
=\sum_{i\geq1}\sum_{n=i}^{2i}t^n\sum_{j=n-i}^i\binom{j}{n-i}q_i(j)\\
&=\sum_{i\geq1}\sum_{j=1}^iq_i(j)\sum_{n=i}^{i+j}\binom{j}{n-i}t^n
=\sum_{i\geq1}\sum_{j=1}^it^i(1+t)^jq_i(j)\\
&=\sum_{j\geq1}\sum_{i\geq j}t^i(1+t)^jq_i(j)=\sum_{j\geq1}Q_j(t)(1+t)^j,
\end{align*}
from which \eqref{1-11th1e1} follows from the last equality and \eqref{1-11lem4e1}.
\end{proof}

Using some of the prior results, one can obtain with a bit more work a formula comparable to \eqref{1-11th1e1} for the pattern 2-11.

\begin{theorem}\label{2-11th}
We have
\begin{equation}\label{2-11the1}
\sum_{n\geq1}c_n(2\text{-}11)t^n=\sum_{m\geq1}\frac{\left(\frac{1+t}{t}\right)^{(m+1)/2}}{tU_m\left(\frac{\sqrt{1+t}}{2\sqrt{t}}\right)
U_{m+1}\left(\frac{\sqrt{1+t}}{2\sqrt{t}}\right)U_{m+2}\left(\frac{\sqrt{1+t}}{2\sqrt{t}}\right)}.
\end{equation}
\end{theorem}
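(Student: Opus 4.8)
The plan is to refine $c_n(2\text{-}11)$ according to the largest letter and to use a structural description of the avoiders parallel to the one behind Theorem~\ref{1-11th1}. First I would record the following characterization: a Catalan word $\pi$ avoids $2\text{-}11$ if and only if every level $\pi_j=\pi_{j+1}$ of $\pi$ occurs at a position where $\pi_j=\max(\pi_1,\dots,\pi_j)$; equivalently, any maximal run of length at least two in $\pi$ has value equal to the maximum of the prefix that it terminates. Indeed, a level of value $v$ together with an earlier entry exceeding $v$ is an occurrence of $2\text{-}11$, and conversely if all levels sit at the running maximum no such earlier entry exists. In particular, once the largest letter $m$ has appeared, every later stretch of $\pi$ lying strictly below $m$ must be level-free, since a level there would be dominated by the earlier $m$.

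Let $A_m=A_m(t)$ be the generating function for $2\text{-}11$-avoiding Catalan words with largest letter $m$, so that $\sum_{n\ge1}c_n(2\text{-}11)t^n=\sum_{m\ge1}A_m(t)$, and let $B_m=B_m(t)$ be the analogous generating function when the last letter is also required to equal $m$. For $m\ge2$ I would decompose such a word $w$ (with largest letter $m$) about its maximal runs of $m$'s, writing
\[
w=w^{(1)}\,m^{a_1}\,v^{(1)}\,m^{a_2}\cdots v^{(k-1)}\,m^{a_k}\,v^{(k)},\qquad k\ge1,\ a_1,\dots,a_k\ge1,
\]
where the prefix $w^{(1)}$ before the first $m$ is a $2\text{-}11$-avoiding Catalan word with largest and last letter $m-1$ (counted by $B_{m-1}$; it is $1^{a}$ with $a\ge1$ when $m=2$), each intermediate block $v^{(j)}$, $1\le j\le k-1$, is a nonempty word on $[m-1]$ obeying the growth condition, ending in $m-1$, and level-free (counted by $H_{m-1}$), and the terminal block $v^{(k)}$ is a possibly empty word on $[m-1]$ obeying the growth condition and level-free (counted by $J_{m-1}$). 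The level-freeness of $v^{(1)},\dots,v^{(k)}$ is forced by the characterization above, whereas $w^{(1)}$ may carry levels at its own running maxima, which is why it is a genuine avoider rather than a level-free word. Since the part of $w$ up to and including $m^{a_k}$ is exactly a word counted by $B_m$, one gets $A_m=B_m\,J_{m-1}$, while the decomposition of a $B_m$-word ($w^{(1)}$, then $m^{a_1}$, then any number of blocks $v^{(j)}m^{a_{j+1}}$) gives, for $m\ge2$,
\[
B_m=\frac{t}{1-t}\cdot\frac{B_{m-1}}{1-\frac{t}{1-t}H_{m-1}},
\]
with $B_1=A_1=\frac{t}{1-t}$ and, since $J_0=1$, the identity $A_m=B_mJ_{m-1}$ valid at $m=1$ as well.

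Finally I would solve the recurrence for $B_m$ in closed form. Writing $H_{m-1}=U_{m-2}(s)/U_m(s)$ with $s=\tfrac{\sqrt{1+t}}{2\sqrt t}$ (Lemma~\ref{1-11lem1}), the recurrence becomes $B_m/B_{m-1}=tU_m(s)\big/\bigl((1-t)U_m(s)-tU_{m-2}(s)\bigr)$, and this equals $U_m(s)/U_{m+2}(s)$ precisely because of the identity $\tfrac{1-t}{t}\,U_m(s)=U_{m+2}(s)+U_{m-2}(s)$ — which follows from applying $U_{k+1}+U_{k-1}=2sU_k$ with $k=m\pm1$ together with $4s^2-2=\tfrac{1-t}{t}$. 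A telescoping product then yields $B_m(t)=\tfrac1t\bigl(\tfrac{1+t}{t}\bigr)^{1/2}\big/\bigl(U_{m+1}(s)U_{m+2}(s)\bigr)$ (using $U_2(s)=1/t$), and multiplying by $J_{m-1}(t)=\bigl(\tfrac{1+t}{t}\bigr)^{m/2}/U_m(s)$ from Lemma~\ref{1-11lem2} gives $A_m(t)=\bigl(\tfrac{1+t}{t}\bigr)^{(m+1)/2}\big/\bigl(tU_m(s)U_{m+1}(s)U_{m+2}(s)\bigr)$; summing over $m\ge1$ produces \eqref{2-11the1}. The main obstacle is getting the decomposition exactly right — in particular, distinguishing the role of $w^{(1)}$ (an avoider, so it may contain levels at its own running maxima) from that of the later blocks (which must be level-free) — after which pinning down the single Chebyshev identity that collapses the $B_m$ recurrence is routine.
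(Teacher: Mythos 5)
Your proposal is correct and follows essentially the same route as the paper: the paper likewise decomposes an avoider with largest letter $m$ about its occurrences of $m$, obtaining $P_m=\frac{tK_mJ_{m-1}}{1-t-tH_{m-1}}$ with $K_{m+1}$ playing the role of your $B_m$ (so your relations $A_m=B_mJ_{m-1}$ and $B_m=\frac{tB_{m-1}}{1-t-tH_{m-1}}$ are the same recurrences), and then collapses the product via the same Chebyshev identity $(1-t)U_{m}(s)-tU_{m-2}(s)=tU_{m+2}(s)$. The only differences are cosmetic: you group maximal runs of $m$ where the paper allows empty intermediate blocks, and you telescope $U_j/U_{j+2}$ directly rather than first writing $K_m=t^{m-1}/\prod_{i=0}^{m-2}(1-t-tH_i)$.
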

\begin{proof}
Let $P_m=P_m(t)$ denote the generating function for the number of members $w \in \mathcal{C}_{n}(2\text{-}11)$ with $\text{max}(w)=m$.  Clearly, $P_1=\frac{t}{1-t}$, so assume $m \geq 2$.  To determine an explicit formula for $P_m$ where $m\geq 2$, first decompose $w$ enumerated by $P_m$ as $w=w^{(1)}m\cdots w^{(\ell)}mw^{(\ell+1)}$ for some $\ell \geq1$, where each $w^{(j)}$ is $(m-1)$-ary. Then $w^{(j)}$ for $j \in [\ell]$ ends in $m-1$, with $w^{(1)}$ a Catalan word that avoids 2-11 and $w_j$ for each $j \in [2,\ell+1]$ satisfying the Catalan adjacency requirement and containing no levels.  Further, we have that $w^{(1)}$ is nonempty as $m\geq2$, with all other $w^{(j)}$ possibly empty.
By the definitions of $H_m$ and $J_m$, we get
\begin{equation}\label{2-11the2}
P_m=\frac{tK_mJ_{m-1}}{1-t-tH_{m-1}}, \quad m \geq 1,
\end{equation}
where $K_m=K_m(t)$ for $m \geq2$ is the generating function for the number of members of $\mathcal{C}_{n}(2\text{-}11)$ in which $m-1$ is both the largest and last letter, with $K_1=1$.

To find a formula for $K_m$, suppose $\rho$ enumerated by $K_m$ where $m\geq2$ is decomposed as $\rho=\rho^{(1)}(m-1)\cdots\rho^{(\ell)}(m-1)$ for some $\ell \geq1$, where each $\rho^{(j)}$ is $(m-2)$-ary.  Then it is seen that the section $\rho^{(1)}(m-1)$ contributes $tK_{m-1}$ towards $K_m$, with the remaining part $\rho^{(2)}(m-1)\cdots\rho^{(\ell)}(m-1)$ contributing $\frac{1}{1-t-tH_{m-2}}$.  This implies
$$K_m=\frac{tK_{m-1}}{1-t-tH_{m-2}}, \qquad m \geq 2,$$
with $K_1=1$, and hence
$$K_m=\frac{t^{m-1}}{\prod_{i=0}^{m-2}(1-t-tH_i)}, \qquad m \geq1.$$
So, by \eqref{2-11the2}, we get
\begin{equation}\label{2-11the3}
P_m=\frac{t^mJ_{m-1}}{\prod_{i=0}^{m-1}(1-t-tH_i)}, \qquad m \geq 1.
\end{equation}
Note that by \eqref{1-11lem1e1} and the recurrence for $U_j$,
we have
$$1-t(1+H_i(t))=1-t\left(1+\frac{U_{i-1}(s)}{U_{i+1}(s)}\right)=\frac{U_{i+1}(s)-\sqrt{t(1+t)}U_i(s)}{U_{i+1}(s)}=\frac{tU_{i+3}(s)}{U_{i+1}(s)},$$
where $s=\frac{\sqrt{1+t}}{2\sqrt{t}}$. Hence, by \eqref{2-11the3} and \eqref{1-11lem2e1}, we get
\begin{align*}
P_m(t)&=\frac{t^mJ_{m-1}(t)}{\prod_{i=0}^{m-1}t\left(\frac{tU_{i+3}(s)}{U_{i+1}(s)}\right)}=\frac{U_1(s)U_2(s)J_{m-1}(t)}{U_{m+1}(s)U_{m+2}(s)}=\frac{U_1(s)U_2(s)\cdot\frac{1}{U_{m}(s)}\left(\frac{1+t}{t}\right)^{m/2}}{U_{m+1}(s)U_{m+2}(s)}\\
&=\frac{\left(\frac{1+t}{t}\right)^{(m+1)/2}}{tU_m(s)U_{m+1}(s)U_{m+2}(s)}.
\end{align*}
Summing the last formula over all $m\geq1$ now yields the desired result.
\end{proof}

\section{Patterns of the type $(2,1)$}

In this section, we seek to determine $c_n(\tau)$ for $\tau$ of the form $(2,1)$.  Our work is shortened by observing that the patterns 12-1, 12-2, 12-3 and 23-1 are equivalent on Catalan words to the analogous classical patterns of length three.  An argument similar to that given above showing the equivalence of 31-2 and 3-1-2 may be given in each case.  From the results in \cite{BKV2}, we obtain the following simple formulas for the number of avoiders of each pattern.

\begin{proposition}\label{(2,1)prop}
If $n \geq 1$, then $c_n(12\text{-}1)=c_n(12\text{-}3)=2^{n-1}$, $c_n(12\text{-}2)=\binom{n}{2}+1$ and $c_n(23\text{-}1)=F_{2n-1}$.
\end{proposition}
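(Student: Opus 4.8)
The plan is to show, for each of the four patterns $\tau\in\{12\text{-}1,\,12\text{-}2,\,12\text{-}3,\,23\text{-}1\}$, that the avoidance class $\mathcal{C}_n(\tau)$ equals $\mathcal{C}_n(\sigma)$, where $\sigma$ is the classical three-letter pattern obtained from $\tau$ by erasing the dash (i.e., $1\text{-}2\text{-}1$, $1\text{-}2\text{-}2$, $1\text{-}2\text{-}3$ and $2\text{-}3\text{-}1$, respectively), and then to quote the counts of these classical classes from \cite{BKV2}, namely $c_n(1\text{-}2\text{-}1)=c_n(1\text{-}2\text{-}3)=2^{n-1}$, $c_n(1\text{-}2\text{-}2)=\binom{n}{2}+1$ and $c_n(2\text{-}3\text{-}1)=F_{2n-1}$. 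One inclusion is immediate: any vincular occurrence of $\tau$ is in particular a classical occurrence of $\sigma$, so $\mathcal{C}_n(\sigma)\subseteq\mathcal{C}_n(\tau)$. Hence everything rests on the reverse inclusion: a Catalan word that contains $\sigma$ classically must already contain the vincular pattern $\tau$.

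For that reverse inclusion I would isolate one elementary fact about Catalan words, phrased as a claim: if $\pi=\pi_1\cdots\pi_n\in\mathcal{C}_n$ and $i<j$ satisfy $\pi_i<\pi_j$, then for every integer $v$ with $\pi_i\le v<\pi_j$ there exists $p\in[i,j-1]$ with $\pi_p=v$ and $\pi_{p+1}=v+1$. To see this, let $p$ be the largest index in $[i,j-1]$ with $\pi_p\le v$, which exists because $i$ qualifies; then $\pi_t>v$ for all $t\in[p+1,j]$, so $\pi_{p+1}\ge v+1$, whereas the Catalan condition gives $\pi_{p+1}\le\pi_p+1\le v+1$, and therefore $\pi_{p+1}=v+1$ and $\pi_p=v$. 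Informally, any (possibly non-adjacent) ascent of a Catalan word can be slid to an adjacent ascent $v,\,v+1$ seated at any prescribed level $v$ in its range.

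Granting the claim, the four cases are handled uniformly. Assume $\pi\in\mathcal{C}_n$ contains $\sigma$ classically, witnessed by positions $i<j<k$ whose entries realize the first, second and third letters of $\sigma$; in all four patterns the first two letters ascend, so $\pi_i<\pi_j$. Apply the claim at an appropriate level $v$ with $\pi_i\le v<\pi_j$ --- one may take $v=\pi_i$ when $\sigma\in\{1\text{-}2\text{-}1,\,1\text{-}2\text{-}3,\,2\text{-}3\text{-}1\}$ and $v=\pi_j-1$ when $\sigma=1\text{-}2\text{-}2$ --- producing an adjacent ascent $\pi_p=v$, $\pi_{p+1}=v+1$ with $i\le p<p+1\le j<k$, so in particular $p+1<k$. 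Comparing $\pi_p$, $\pi_{p+1}$, $\pi_k$ (using respectively $\pi_k<\pi_i$, $\pi_k=\pi_i$, $\pi_k=\pi_j$, $\pi_k>\pi_j$ for $2\text{-}3\text{-}1$, $1\text{-}2\text{-}1$, $1\text{-}2\text{-}2$, $1\text{-}2\text{-}3$, together with $\pi_i\le v\le\pi_j-1$) shows in each case that $\pi_p\pi_{p+1}\pi_k$ is order-isomorphic to $\sigma$ with its first two entries adjacent, i.e., an occurrence of $\tau$. This yields $\mathcal{C}_n(\tau)\subseteq\mathcal{C}_n(\sigma)$, and combining with the formulas from \cite{BKV2} completes the proof.

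I do not expect a genuine obstacle here --- the argument mirrors the one already carried out in the text for $3\text{-}12$ versus $3\text{-}1\text{-}2$. The only point needing a little attention is the choice of the level $v$ at which to place the adjacent ascent: anchoring at $v=\pi_i$ preserves the relation with $\pi_k$ required for $1\text{-}2\text{-}1$, $1\text{-}2\text{-}3$ and $2\text{-}3\text{-}1$ but not for $1\text{-}2\text{-}2$ (there the third letter must equal the second, which forces $v=\pi_j-1$), so that case should be separated. It is also worth recording explicitly that $p+1<k$ --- needed because the vincular pattern involves three distinct positions with only the first two adjacent --- which is immediate from $p+1\le j<k$.
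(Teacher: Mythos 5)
Your proposal is correct and follows the same route the paper takes: reduce each vincular pattern to the corresponding classical pattern and quote the counts from \cite{BKV2}, the paper merely asserting that the equivalences follow by ``an argument similar to'' the one given for $3\text{-}12$ versus $3\text{-}1\text{-}2$. Your uniform sliding lemma (any classical ascent in a Catalan word yields an adjacent ascent $v,v+1$ at any prescribed level $v$ in its range) cleanly supplies the details the paper leaves to the reader, including the correct choice of anchor level $v=\pi_j-1$ in the $1\text{-}2\text{-}2$ case.
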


\noindent \emph{Remark:} The formulas from Propositions \ref{(1,2)prop} and \ref{(2,1)prop} were obtained in \cite{BKV2} as special cases of generating functions enumerating the various avoidance classes $\mathcal{C}_n(\tau)$, with $\tau$ a classical pattern of length three, according to the descents statistic.  We remark that it is possible to provide direct combinatorial proofs of these formulas for $c_n(\tau)$, where $\tau$ is taken to be the corresponding equivalent vincular pattern of length three in each case, without recourse to generating functions.  This can either be achieved bijectively by defining certain correspondences between discrete structures or by arguing combinatorially that the sequence $u_n=c_n(\tau)$ satisfies the appropriate two-term recurrence (for example, $u_n=3u_{n-1}-u_{n-2}$ for $n \geq 3$ in the cases when $\tau=2\text{-}12, 23\text{-}1$ or $u_n=4u_{n-1}-3u_{n-2}$ when $\tau=3\text{-}12$).  We leave these problems as exercises to explore for the interested reader. \medskip

 The pattern 11-1 is equivalent to 1-11 considered above.

\begin{proposition}\label{11-1prop}
If $n \geq 1$, then $c_n(11\text{-}1)=c_n(1\text{-}11)$.  Moreover, the refinements of $c_n(11\text{-}1)$ according to the largest or last letter parameters coincide with those of $c_n(1\text{-}11)$.
\end{proposition}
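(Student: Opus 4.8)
The plan is to leverage the structural description of $\mathcal{C}_n(1\text{-}11)$ already exploited in the proof of Theorem~\ref{1-11th1}: each member arises uniquely from a pair $(\sigma,S)$, where $\sigma\in\mathcal{C}_{i,j}'$ is a level-free Catalan word and $S$ is a subset of its $j$ distinct letters, by inserting a second copy of each $x\in S$ immediately after the \emph{first} occurrence of $x$ in $\sigma$ (so $x$ acquires an initial run of length two). First I would prove the exact analogue for the pattern $11\text{-}1$: its members arise uniquely from the same data $(\sigma,S)$, but now by inserting the extra copy of each $x\in S$ immediately after the \emph{last} occurrence of $x$ in $\sigma$ (so $x$ acquires a terminal run of length two). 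Granting both descriptions, composing $\mathcal{C}_n(1\text{-}11)\to\{(\sigma,S)\}\to\mathcal{C}_n(11\text{-}1)$ produces a bijection that fixes the underlying word $\sigma$. Since $\sigma$ determines the largest letter (it equals $\max(\sigma)$, since inserting duplicates of existing values introduces no new letter) and also the last letter (inserting a duplicate of a value never alters the final entry, whether the duplicate is placed just after a first or just after a last occurrence), this bijection preserves the largest and last letter parameters simultaneously, yielding all the asserted equalities.

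The substance lies in the structural description on the $11\text{-}1$ side. I would first observe that a Catalan word $\pi$ avoids $11\text{-}1$ exactly when no value that forms a level is followed later by a further occurrence of that value; hence in such a word every value forms at most one level, and that level, when present, occupies the last two positions among the occurrences of its value. This gives the inverse map directly: from $\pi\in\mathcal{C}_n(11\text{-}1)$, let $S$ be the set of values forming a level in $\pi$ and let $\sigma$ be obtained by deleting one entry from each such level. Then $\sigma$ is again a level-free Catalan word: no deletion creates a new level (a value has at most one level, so the entries flanking a level differ from it), and the unit-increase condition descends because if $\pi_i=\pi_{i+1}$ and $\pi_{i+1}$ is deleted, then $\pi_i$ is still followed by $\pi_{i+2}\le\pi_{i+1}+1=\pi_i+1$. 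Moreover $S$ is a set of distinct letters of $\sigma$, and duplicating the last occurrence of each $x\in S$ in $\sigma$ returns $\pi$, since the deleted copy was precisely the last occurrence of $x$ in $\pi$. Conversely, starting from an arbitrary pair $(\sigma,S)$ and duplicating the last occurrence of each $x\in S$ yields a Catalan word whose only levels are the inserted ones, each sitting at the end of its value's run, so no duplicated value recurs after its level and $11\text{-}1$ is avoided; uniqueness of the pair is immediate since both $S$ and $\sigma$ are read off from the word.

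I expect the only real care needed to be the bookkeeping when several letters of $S$ are treated at once: the insertion (resp.\ deletion) positions belonging to distinct letters are pairwise disjoint, so the operations commute and do not interfere, and one must check, routinely, that each such operation respects the defining constraints $\pi_1=1$ and $\pi_{k+1}\le\pi_k+1$ of Catalan words (the boundary cases, where a level sits at the start or end of $\pi$, are immediate). No generating functions are required; the whole argument is combinatorial and runs exactly parallel to the reasoning recorded for Theorem~\ref{1-11th1}, with ``first occurrence'' systematically replaced by ``last occurrence''.
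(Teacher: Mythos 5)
Your proposal is correct and follows essentially the same route as the paper: both rest on the characterization that avoiding $11\text{-}1$ (resp.\ $1\text{-}11$) forces every run of a letter to have length one except possibly the last (resp.\ first) run, which may have length two, and both define the bijection that relocates the doubled letter from the last run to the first run of each value, noting that this preserves the largest and last letters. Your factorization through the intermediate pairs $(\sigma,S)$ of a level-free word and a marked subset of letters is just a slightly more explicit packaging of the paper's one-step map.
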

\begin{proof}
First note that $\pi\in\mathcal{C}_n$ belongs to $\mathcal{C}_n(11\text{-}1)$ if and only if for each $i \geq 1$, all runs of $i$ within $\pi$ are of length one, except for possibly the last, which may be of length two.  Likewise, membership of $\pi$ in $\mathcal{C}_n(1\text{-}11)$ implies each run of $i$ is of length one, except for possibly the first, which may be of length two.  Let $\pi'$ be obtained from $\pi\in \mathcal{C}_n(11\text{-}1)$ by moving the second letter, if it occurs, in the last run of $i$ to the first run, leaving all other runs of $i$ unchanged, for each of the distinct letters $i$ occurring in $\pi$.  The mapping $\pi \mapsto \pi'$ is seen to be a bijection from $\mathcal{C}_n(11\text{-}1)$ to $\mathcal{C}_n(1\text{-}11)$.  Furthermore, since $\pi$ and $\pi'$ have the same largest and last letters for all $\pi$, the second statement follows.
\end{proof}

\noindent \emph{Remark:} Similarly, by reversing the sequence of run lengths of each letter and considering the resulting Catalan word, one has, more generally, $c_n(1^{a_1}\text{-}\cdots\text{-}1^{a_k})=c_n(1^{a_k}\text{-}\cdots\text{-}1^{a_1})$, where $k \geq 2$ and $a_1,\ldots,a_k\geq1$. \medskip

\subsection{The cases 22-1 and 32-1}

We have the following simple formulas for the number of avoiders of 22-1 or 32-1.

\begin{theorem}\label{22-1/32-1}
If $n \geq 1$, then $c_n(22\text{-}1)=L_n$ and $c_n(32\text{-}1)=\frac{1}{2}\left(3^{n-1}+1\right)$.
\end{theorem}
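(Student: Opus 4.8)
The plan is to treat the two patterns separately, in each case reducing avoidance to a structural restriction on runs that admits a direct enumeration. For $32\text{-}1$, the key observation should be that $\pi=\pi_1\cdots\pi_n\in\mathcal{C}_n$ avoids $32\text{-}1$ precisely when there is no descent bottom strictly exceeding $1$; equivalently, every descent in $\pi$ drops to the letter $1$. Indeed, if $\pi_j\pi_{j+1}=ab$ is a descent with $b\geq 2$, then since $\pi$ is a Catalan word the letter $b-1$ (in fact the whole interval $[1,b-1]$) must occur somewhere before position $j$, and the leftmost such occurrence $\pi_k=b-1<b\leq a$ with $k<j$ together with $\pi_j=a,\pi_{j+1}=b$ would need the ``$2$'' and ``$3$'' adjacent; one checks the immediate predecessor of the first letter $a$ in the maximal ascending run ending at $\pi_j$ gives an adjacent pair $(a-1,a)$ or, tracing back, produces an occurrence of $32\text{-}1$ with its first two entries adjacent. (This mirrors the $3\text{-}12$ argument already given in the text, run in reverse.) Once we know avoiders of $32\text{-}1$ are exactly the Catalan words all of whose descent bottoms equal $1$, these are in obvious bijection — by the same ``shift each block down'' map $f$ used in the proof of Theorem~\ref{RcasesTh1} — with sequences built by concatenating weakly increasing ``staircase'' blocks $1,2,\ldots,k$ each returning to $1$; counting these by a first-return decomposition gives a generating function $y=1+\frac{t}{1-t}\cdot\frac{y-?}{}$ that solves to yield $c_n(32\text{-}1)=\tfrac12(3^{n-1}+1)$. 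I expect the cleanest route is actually to show $c_n(32\text{-}1)=c_n(3\text{-}12)$ by exhibiting a bijection between the two avoidance classes (both are counted by $\tfrac12(3^{n-1}+1)$ in Tables~\ref{tab1},\ref{tab2}), or simply to verify the two-term recurrence $c_n=4c_{n-1}-3c_{n-2}$ combinatorially using the block structure.

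For $22\text{-}1$, the plan is to identify the avoidance class with the smooth Catalan words already handled in Theorem~\ref{RcasesTh2}. First I would characterize membership: $\pi\in\mathcal{C}_n$ avoids $22\text{-}1$ iff whenever some letter $a$ appears at least twice in two positions that are \emph{adjacent in $\pi$} (i.e. $\pi$ contains a level $aa$), no strictly smaller letter occurs later. Combined with the Catalan condition $\pi_{i+1}\leq\pi_i+1$, I claim this forces $|\pi_{i+1}-\pi_i|\leq 1$ for all $i$, exactly as in the $2\text{-}31$ analysis: a drop $\pi_j\pi_{j+1}=ab$ with $a\geq b+2$ would, after locating an earlier pair of equal adjacent large letters (which must exist because the run reaching height $a$ was climbed in unit steps from $1$ and any plateau along the way, or the forced repetition of intermediate values, supplies a level $cc$ with $c>b$), produce $cc\ldots b$, an occurrence of $22\text{-}1$. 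Hence $\mathcal{C}_n(22\text{-}1)\subseteq\mathcal{T}_n$, the smooth Catalan sequences; the reverse inclusion is immediate since a smooth sequence cannot contain $cc$ followed by anything less than $c-0$... more carefully, in a smooth sequence any letter below a previous level-value $c$ is reached by descending through $c-1,c-2,\ldots$, and one checks no $22\text{-}1$ can then occur. Therefore $\mathcal{C}_n(22\text{-}1)=\mathcal{T}_n$ and $c_n(22\text{-}1)=L_n$ by Theorem~\ref{RcasesTh2}.

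The main obstacle I anticipate is getting the structural characterizations exactly right — in particular, verifying that the repetitions forced by the Catalan growth condition really do supply the ``$22$'' or ``$32$'' portion of the forbidden pattern in an \emph{adjacent} pair, which is where the vincular (as opposed to classical) nature of the pattern matters. The $22\text{-}1$ case is slightly subtler than $2\text{-}31$ because the forced repeated letter must form a genuine level, not just appear twice; I would handle this by arguing that in a Catalan word with a large descent $a\to b$ ($a\geq b+2$), either some height $c\in(b,a)$ is a plateau (giving $cc$ directly) or every such height is visited exactly once on the way up and again on the way down — but then the down-visit at height $b+1$ immediately precedes $\pi_{j+1}=b$, and pairing the two visits to $b+1$ across the descent still needs care. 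If that dichotomy proves awkward, the fallback is to drop to a generating-function computation via a run-length refinement, but I expect the purely structural argument to go through and to be short. For $32\text{-}1$, the only delicate point is the base-case bookkeeping in the recurrence $c_n=4c_{n-1}-3c_{n-2}$; the closed form $\tfrac12(3^{n-1}+1)$ then follows by solving the linear recurrence with $c_1=1$, $c_2=2$.
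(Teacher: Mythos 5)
Both halves of your proposal rest on structural characterizations that are false, so the argument does not go through. For $22\text{-}1$, the claimed identity $\mathcal{C}_n(22\text{-}1)=\mathcal{T}_n$ fails in both directions: the word $1231\in\mathcal{C}_4$ has no level at all, hence trivially avoids $22\text{-}1$, yet it is not smooth (it drops from $3$ to $1$); conversely $1221$ is smooth but contains the occurrence $2,2,1$ of $22\text{-}1$. The dichotomy you flag as the delicate point is exactly where the argument breaks: the Catalan condition restricts only \emph{increases}, so a word can climb to height $a$ in unit steps and then fall by any amount in a single step without ever revisiting an intermediate height and without creating any level. The two classes $\mathcal{C}_n(22\text{-}1)$ and $\mathcal{T}_n$ are merely equinumerous, not equal, so no containment argument can work; one genuinely needs a counting argument. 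The paper's route is to condition on the leftmost level: an avoider either has no level (there are $M_{n-1}$ of these) or decomposes as $\alpha\beta$ with $\alpha\in\mathcal{C}_i$ level-free and $\beta$ (shifted down) an arbitrary member of $\mathcal{C}_{n-i}(22\text{-}1)$, giving $a_n=M_{n-1}+\sum_{i=1}^{n-1}M_{i-1}a_{n-i}$, and then checks that $L_n$ satisfies the same recurrence via Motzkin left-factors.

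For $32\text{-}1$, the characterization ``every descent bottom equals $1$'' is also wrong: $12322$ has the descent $3\to2$ with bottom $2$, but avoids $32\text{-}1$ because no letter smaller than $2$ occurs after that descent. In fact ``all descent bottoms equal $1$'' characterizes $\mathcal{C}_n(1\text{-}32)$, which is counted by $F_{2n-1}$ (Theorem~\ref{RcasesTh1}), and $F_9=34\neq 41=c_5(32\text{-}1)$, so this class cannot be the right one. The correct condition is that after any descent $\pi_i>\pi_{i+1}$ no \emph{later} letter lies strictly below $\pi_{i+1}$; the paper exploits this by decomposing an avoider according to its second occurrence of the letter $1$ (if any), writing $\rho=1\rho'1\rho''$ with $\rho'$ forced to be weakly increasing, which yields $b_n=2b_{n-1}+\sum_{m=2}^{n-1}2^{m-2}b_{n-m}$ and hence the closed form. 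Your fallback idea of verifying a two-term recurrence $c_n=4c_{n-1}-3c_{n-2}$ combinatorially is plausible in spirit, but as stated it is only a plan, and the block structure you would base it on is the incorrect one above.
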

\begin{proof}
Let $a_n=c_n(22\text{-}1)$.  Recall that there are $M_{n-1}$ members of $\mathcal{C}_n$ that have no equal adjacent letters, i.e., contain no levels; see, e.g., \cite{BKRV}.  Consider whether or not $\pi=\pi_1\cdots\pi_n \in \mathcal{C}_n(22\text{-}1)$ contains a level, and if it does, let $i$ denote the index of the first letter of the leftmost level. In the latter case, we have that $\pi$ may be decomposed as $\pi=\alpha\beta$, where $\alpha \in \mathcal{C}_i$ contains no levels (and hence is enumerated by $M_{i-1}$) and $\beta=\pi_{i+1}\cdots \pi_n$ is such that subtracting $x-1$ from each of its entries results in an arbitrary member of $\mathcal{C}_{n-i}(22\text{-}1)$, where $x$ denotes the common value of $\pi_i$ and $\pi_{i+1}$.  Note that $\beta$ cannot contain any letters in $[x-1]$ and is as described, for otherwise $\pi$ would contain an occurrence of 22-1.  Combining the cases for $\pi$, we get the recurrence
\begin{equation}\label{a_nrecur}
a_n=M_{n-1}+\sum_{i=1}^{n-1}M_{i-1}a_{n-i}, \qquad n \geq 2,
\end{equation}
with $a_1=1$. On the other hand, considering whether or not a Motzkin left-factor with $n-1$ steps ends at height zero, and if not, the position of the rightmost step starting at height zero, implies that the sequence $L_n$ also satisfies \eqref{a_nrecur} for $n \geq 2$, with $L_1=1$, which yields the first equality.

For the second equality, let $b_n=c_n(32\text{-}1)$ and first observe that there are $b_{n-1}$ members of $\mathcal{C}_n(32\text{-}1)$ containing a single 1 and the same number that start $1, 1$.  So assume $\rho=\rho_1\cdots \rho_n \in \mathcal{C}_n(32\text{-}1)$ is of the form $\rho=1\rho'1\rho''$, where $\rho'$ is nonempty and contains no $1$'s.  If $\rho'$ has length $m-1$, where $2 \leq m \leq n-1$, then there are $2^{m-2}$ possibilities for $\rho'$ as it is weakly increasing and starts with 2.  Further, there are $b_{n-m}$ possibilities for the section $1\rho''$, as no restrictions are placed upon it by $1\rho'$.  Considering all possible $m$ then yields the recurrence
\begin{equation}\label{b_nrecur}
b_n=2b_{n-1}+\sum_{m=2}^{n-1}2^{m-2}b_{n-m}, \qquad n \geq 2,
\end{equation}
with $b_1=1$.

At this point, one may show $b_n=\frac{1}{2}(3^{n-1}+1)$ for $n \geq1$ by computing its generating function using \eqref{b_nrecur}.  Alternatively, one can give a more combinatorial proof as follows.  First, recall that there are $\frac{1}{2}(3^{n-1}+1)$ partitions of $[n]$ that have at most three blocks; see, for example, \cite[A124302]{Sloane}.  Let us represent partitions sequentially as restricted growth words (see, e.g., \cite{Wag}) and let $\mathcal{R}_n$ denote the set of such words corresponding to the partitions of $[n]$ having at most three blocks.  Let $r_n=|\mathcal{R}_n|$ for $n \geq 1$ and we show that $r_n$ satisfies \eqref{b_nrecur} for all $n\geq2$.  Note that members of $\mathcal{R}_n$ are $3$-ary words of length $n$ that  start with $1$, such that if 3 occurs, then 2 does as well and the leftmost 3 occurs somewhere to the right of the leftmost 2.  Then there are clearly $r_{n-1}$ members of $\mathcal{R}_n$ that end in $1$ and the same number ending in 2.  To enumerate the members ending in 3, let $n-m+1$ be the position of the rightmost 2
within $\pi=\pi\cdots\pi_n \in \mathcal{R}_n$, where $2\leq m\leq n-1$.  Then there are $r_{n-m}$ possibilities for the section $\pi_1\cdots\pi_{n-m}$, and $2^{m-2}$ possibilities for $\pi_{n-m+2}\cdots\pi_{n-1}$, each entry of which must be a 1 or 3 independent of the others.  Summing over $2 \leq m\leq n-1$ then gives $\sum_{m=2}^{n-1}2^{m-2}r_{n-m}$ members of $\mathcal{R}_n$ ending in 3.  Combining with the prior two cases implies $r_n$ satisfies recurrence \eqref{b_nrecur}, with $r_1=1$.  Hence, we have $b_n=r_n$ for all $n \geq 1$, as desired.
\end{proof}

\subsection{The cases 21-2 and 21-3}

Given $n \geq 1$ and $1 \leq a \leq n$, let $\mathcal{U}_{n,a}$ denote the subset of $\mathcal{C}_n(21\text{-}2)$ whose members have last letter $a$ and let $u_n(a)=|\mathcal{U}_{n,a}|$.  Put $u_n(a)=0$ if it is not the case that $n \geq 1$ with $a \in [n]$.  The array $u_n(a)$ is given recursively as follows.

\begin{lemma}\label{21-2lem1}
If $n \geq 2$, then
\begin{equation}\label{21-2lem1e1}
u_n(a)=\sum_{j=a-1}^{n-1}u_{n-1}(j)-\sum_{j=1}^{a-1}\sum_{m=j+1}^{n-a}\binom{m-2}{j-1}u_{n-m}(a), \quad 2 \leq a \leq n,
\end{equation}
with $u_n(1)=\sum_{j=1}^{n-1}u_{n-1}(j)$ for $n \geq 2$ and $u_1(1)=1$.
\end{lemma}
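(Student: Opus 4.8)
The plan is to derive \eqref{21-2lem1e1} by an inclusion--exclusion over the length-$(n-1)$ prefixes of the members of $\mathcal{U}_{n,a}$. First I would record the relevant structural fact: a Catalan word contains $21\text{-}2$ exactly when some descent top reoccurs strictly later, so a member $\sigma$ of $\mathcal{C}_\ell(21\text{-}2)$ has the property that whenever $\sigma_i>\sigma_{i+1}$, position $i$ is the last occurrence of the value $\sigma_i$ in $\sigma$. The one consequence I want to isolate and reuse is: if $\sigma\in\mathcal{C}_\ell(21\text{-}2)$ ends in $a$, then every descent top of $\sigma$ is strictly greater than $a$. Indeed, a descent top $v=a$ before the final position would reoccur there, while a descent top $v<a$ would have to be passed through again as the word climbs back up to its final letter $a>v$; either way an occurrence of $21\text{-}2$ is forced.

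For $a\ge2$ I would then write $\pi\in\mathcal{U}_{n,a}$ as $\pi=\pi' a$ with $\pi'=\pi_1\cdots\pi_{n-1}\in\mathcal{C}_{n-1}(21\text{-}2)$; the only requirement on $\pi'$ for $\pi' a$ to be a Catalan word is $\pi'_{n-1}\ge a-1$, which accounts for the $\sum_{j=a-1}^{n-1}u_{n-1}(j)$ candidates. It remains to subtract the ``bad'' prefixes, those for which appending $a$ produces an occurrence of $21\text{-}2$. Since any such occurrence must use the new last letter as its ``$2$'' (the letter playing the role of $\tau_3$), badness of $\pi'$ is equivalent to $a$ being a descent top of $\pi'$; and for an avoider $\pi'$ this forces the last occurrence $p$ of $a$ to be the one involved, with $\pi'_{p+1}<a$, all of $\pi'_{p+1},\dots,\pi'_{n-1}$ lying in $[1,a-1]$, and (using $\pi'_{n-1}\ge a-1$) $\pi'_{n-1}=a-1$. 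Conversely, I would verify that every word $PS$, where $P\in\mathcal{C}_p(21\text{-}2)$ ends in $a$ and $S$ is a weakly increasing length-$(n-1-p)$ sequence with entries in $[1,a-1]$ ending in $a-1$, is a bad prefix: the only nontrivial point is that $PS$ itself avoids $21\text{-}2$, which is immediate from the fact that all descent tops of $P$ exceed $a$ (so they cannot reoccur within the smaller-valued $S$, and the boundary descent from $a$ cannot be completed since $a$ does not reappear in $S$) together with the absence of descents in $S$.

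This gives a bijection between bad prefixes of length $n-1$ and such pairs $(P,S)$, and counting them yields the subtracted sum. Writing $m=n-p$, the factor $P$ contributes $u_{n-m}(a)$, while $S$ has length $m-1$; since $S$ is a contiguous factor of a Catalan word it must satisfy $S_{i+1}\in\{S_i,S_i+1\}$, so for a given first letter $j\in[1,a-1]$ the number of admissible $S$ is $\binom{m-2}{(a-1)-j}$, and reindexing the sum over $j$ gives $\sum_{j=1}^{a-1}\binom{m-2}{j-1}$. Letting $m$ run from $2$ to $n-a$ (larger $m$ give $u_{n-m}(a)=0$, and the cutoff $m\ge j+1$ is automatic since $\binom{m-2}{j-1}=0$ when $m\le j$) produces exactly the double sum in \eqref{21-2lem1e1}. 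The case $a=1$ is immediate since $1$ is never a descent top, so no prefix is bad and $u_n(1)=\sum_{j=1}^{n-1}u_{n-1}(j)$, while $u_1(1)=1$ is clear. I expect the main obstacle to be the clean equivalence ``$\pi' a$ contains $21\text{-}2$'' $\iff$ ``$a$ is a descent top of $\pi'$'' together with the ensuing reduction to the precise shape $PS$ with a weakly increasing tail, and, underpinning the free composability of $P$ and $S$, the lemma that every descent top of an avoider ending in $a$ strictly exceeds $a$.
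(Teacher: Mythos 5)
Your proposal is correct and follows essentially the same route as the paper: both append $a$ to a length-$(n-1)$ avoider ending in a letter $\geq a-1$, characterize the bad prefixes as exactly those (necessarily ending in $a-1$) having $a$ as a descent top, and count them via the same decomposition into an avoider ending in $a$ followed by a weakly increasing tail in $[1,a-1]$ ending at $a-1$, yielding the factor $\binom{m-2}{j-1}u_{n-m}(a)$. The only cosmetic difference is that the paper packages the structural fact as ``the subsequence of descent tops followed by the last letter is strictly decreasing,'' whereas you use the equivalent ``no descent top reoccurs later, hence every descent top exceeds the final letter.''
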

\begin{proof}
The initial condition when $a=1$ is clear, upon appending $1$ to an arbitrary member of $\mathcal{C}_n(21\text{-}2)$, which is seen not to introduce an occurrence of 21-2.  Note that the set $\mathcal{U}_{n,n}$ for $n \geq 1$ consists of only the sequence $12\cdots n$, and thus \eqref{21-2lem1e1} is seen to hold for $a=n$.  So assume $\pi=\pi_1\cdots \pi_n \in \mathcal{U}_{n,a}$, where $n \geq 3$ and $2 \leq a \leq n-1$.  By a \emph{descent top} within $\pi$, it is meant the larger letter in a descent, i.e., the letter $\pi_i$ within an adjacency $\pi_i\pi_{i+1}$ for some $1 \leq i \leq n-1$ such that $\pi_i>\pi_{i+1}$.
Let $\pi'=\pi_{i_1}\cdots \pi_{i_r}$ denote the subsequence of $\pi$ for some $r \geq 1$ wherein $i_r=n$ and $\pi_{i_j}$ for $1 \leq j \leq r-1$ is the $j$-th descent top of $\pi$ (from the left).  One may verify that $\pi \in \mathcal{C}_n(21\text{-}2)$ implies $\pi'$ is strictly decreasing (with the converse holding as well).  This implies one may append an $a$ to any member of $\cup_{j=a}^{n-1}\mathcal{U}_{n-1,j}$ without introducing 21-2, with all members of $\mathcal{U}_{n,a}$ not ending in $a-1, a$ arising uniquely in this manner. On the other hand, we may append $a$ to $\rho \in \mathcal{U}_{n-1,a-1}$ if and only if $a$ does not occur as a descent top in $\rho$.

We proceed by enumerating the subset $S$ of $\mathcal{U}_{n-1,a-1}$ whose members contain $a$ as a descent top and subtracting the result from $u_{n-1,a-1}$.  Note that $S$ is empty if $a=n-1$, so we may assume $2 \leq a \leq n-2$.  Thus, to complete the proof, it suffices to show
\begin{equation}\label{21-2lem1e2}
|S|=\sum_{j=1}^{a-1}\sum_{m=j+1}^{n-a}\binom{m-2}{j-1}u_{n-m}(a), \quad 2 \leq a \leq n-2.
\end{equation}
To do so, let $\tau=\tau_1\cdots \tau_n \in S$ and suppose $\tau_{n-m}\tau_{n-m+1}=a(a-j)$ for some $1 \leq j \leq a-1$ and $m$.  We seek to enumerate such $\tau$.  Note first that since $\tau$ is a Catalan word, we have that $\tau_{n-m}=a$ implies $m \leq n-a$ and $\tau_{n-m+1}=a-j, \tau_{n-1}=a-1$ implies $m \geq j+1$, as each member of $[a-j,a-1]$ must occur at least once in the weakly increasing subsequence $\beta=\tau_{n-m+1}\cdots \tau_{n-1}$.  Thus, each such $\tau$ is of the form $\tau=\alpha\beta$, where $\alpha \in \mathcal{U}_{n-m,a}$ and $\beta$ is as given.  There are $u_{n-m}(a)$ possibilities for $\alpha$ and
$\binom{m-j-1+j-1}{j-1}=\binom{m-2}{j-1}$ for $\beta$, as the latter are synonymous with weak compositions of $m-j-1$ with $j$ parts.  Hence, there are $\binom{m-2}{j-1}u_{n-m}(a)$ such $\tau$ of the specified form and summing over all possible $j$ and $m$ yields \eqref{21-2lem1e2}, which completes the proof of \eqref{21-2lem1e1}.
\end{proof}

Define the generating function
$$u(t;x)=\sum_{n\geq1}\left(\sum_{a=1}^nu_n(a)x^{a-1}\right)t^n.$$
Then $u(t;x)$ satisfies the following functional equation.

\begin{lemma}\label{21-2lem2}
We have
\begin{equation}\label{21-2lem2e1}
\left(1+\frac{x^2t}{1-x}+\frac{t^2}{1-2t}\right)u(t;x)=t+\frac{t}{1-x}u(t;1)+\frac{t^2}{1-2t}u\left(t;\frac{xt}{1-t}\right).
\end{equation}
\end{lemma}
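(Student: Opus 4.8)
The plan is to convert the recurrence of Lemma~\ref{21-2lem1} into the stated functional equation by summing against $x^{a-1}t^n$. Write $U_n(x)=\sum_{a=1}^nu_n(a)x^{a-1}$, so that $u(t;x)=\sum_{n\geq1}U_n(x)t^n$. First I would observe that \eqref{21-2lem1e1} in fact holds for \emph{all} $1\leq a\leq n$ with $n\geq2$, not merely for $a\geq2$: when $a=1$ the double correction sum is empty and the main sum $\sum_{j=a-1}^{n-1}u_{n-1}(j)$ collapses to $\sum_{j=1}^{n-1}u_{n-1}(j)$, which is exactly the stated value of $u_n(1)$. Since $U_1(x)=1$, multiplying \eqref{21-2lem1e1} by $x^{a-1}t^n$ and summing over $n\geq2$ and $1\leq a\leq n$ then contributes $u(t;x)-t$ on the left-hand side, and the task reduces to evaluating the two sums on the right.

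For the contribution of the single sum $\sum_{j=a-1}^{n-1}u_{n-1}(j)$, I would interchange the order of summation over $a$ and $j$: for fixed $n$ the admissible pairs are $0\leq j\leq n-1$ and $1\leq a\leq j+1$, so using $\sum_{a=1}^{j+1}x^{a-1}=\frac{1-x^{j+1}}{1-x}$ together with $u_{n-1}(0)=0$ and the identities $\sum_j u_{n-1}(j)=U_{n-1}(1)$, $\sum_j u_{n-1}(j)x^{j+1}=x^2U_{n-1}(x)$, the coefficient of $t^n$ becomes $\frac{1}{1-x}\bigl(U_{n-1}(1)-x^2U_{n-1}(x)\bigr)$. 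Summing against $t^n$ yields $\frac{t}{1-x}u(t;1)-\frac{x^2t}{1-x}u(t;x)$ for this part.

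The only delicate computation is the contribution of the double correction sum. Here I would substitute $k=n-m$ and reorganize the quadruple sum so that $u_k(a)x^{a-1}$ carries the weight $\sum_{j=1}^{a-1}\sum_{n\geq k+j+1}\binom{n-k-2}{j-1}t^n$; the standard identity $\sum_{q\geq r}\binom{q}{r}t^q=\frac{t^r}{(1-t)^{r+1}}$ reduces the inner sum over $n$ to $\frac{t^{k+j+1}}{(1-t)^{j}}$, and the finite geometric sum over $j$ then evaluates to $\sum_{j=1}^{a-1}\frac{t^{j+1}}{(1-t)^{j}}=\frac{t^2}{1-2t}-\frac{t^{a+1}}{(1-2t)(1-t)^{a-1}}$, which correctly vanishes when $a=1$. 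The $\frac{t^2}{1-2t}$ piece contributes $\frac{t^2}{1-2t}u(t;x)$; for the remaining piece I would use $\frac{t^{a+1}}{(1-t)^{a-1}}=t^2\bigl(\tfrac{t}{1-t}\bigr)^{a-1}$ to recognize $\sum_{a}u_k(a)x^{a-1}\tfrac{t^{a+1}}{(1-t)^{a-1}}=t^2U_k\!\bigl(\tfrac{xt}{1-t}\bigr)$, which gives $\frac{t^2}{1-2t}u\!\bigl(t;\tfrac{xt}{1-t}\bigr)$. Thus the double sum contributes $\frac{t^2}{1-2t}u(t;x)-\frac{t^2}{1-2t}u\!\bigl(t;\tfrac{xt}{1-t}\bigr)$, entering \eqref{21-2lem1e1} with a minus sign.

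Finally, assembling $u(t;x)-t$ as (single-sum contribution) minus (double-sum contribution) and collecting all $u(t;x)$ terms on the left produces exactly \eqref{21-2lem2e1}. The step I expect to require the most care is the reorganization of the quadruple sum in the third paragraph: verifying that the interchange of summation order is legitimate and keeping precise track of which index ranges are nonempty. I would therefore check the boundary cases $a=1$, $a=n-1$, and $a=n$ against \eqref{21-2lem1e1} explicitly before pushing the formal manipulations through.
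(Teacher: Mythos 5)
Your proposal is correct and follows essentially the same route as the paper: multiply the recurrence of Lemma~\ref{21-2lem1} by $x^{a-1}t^n$, interchange summation orders, and use the identities $\sum_{m\geq j+1}\binom{m-2}{j-1}t^m=\frac{t^{j+1}}{(1-t)^j}$ and the finite geometric sum to produce the $\frac{t^2}{1-2t}\bigl(u(t;x)-u(t;\frac{xt}{1-t})\bigr)$ correction term. The only cosmetic difference is that you fold the $a=1$ case into the general recurrence from the start, whereas the paper handles the $tu(t;1)$ contribution separately before combining it into $\frac{t}{1-x}\bigl(u(t;1)-x^2u(t;x)\bigr)$; both assemble to \eqref{21-2lem2e1} identically.
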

\begin{proof}
First note
\begin{align*}
&\sum_{n\geq2}\sum_{a=2}^n\sum_{j=a-1}^{n-1}u_{n-1}(j)x^{a-1}t^n=\sum_{n\geq2}\sum_{j=1}^{n-1}\sum_{a=2}^{j+1}u_{n-1}(j)x^{a-1}t^n=\sum_{n\geq2}\sum_{j=1}^{n-1}u_{n-1}(j)t^n\cdot \frac{x-x^{j+1}}{1-x}\\
&=\frac{xt}{1-x}\sum_{n\geq1}\sum_{j=1}^nu_n(j)(1-x^j)t^n=\frac{xt}{1-x}\left(u(t;1)-xu(t;x)\right),
\end{align*}
and hence
\begin{align*}
&\sum_{n\geq2}\sum_{j=1}^{n-1}u_{n-1}(j)t^n+\sum_{n\geq2}\sum_{a=2}^n\sum_{j=a-1}^{n-1}u_{n-1}(j)x^{a-1}t^n=tu(t;1)+\frac{xt}{1-x}\left(u(t;1)-xu(t;x)\right)\\
&=\frac{t}{1-x}\left(u(t;1)-x^2u(t;x)\right).
\end{align*}
Also, we have
\begin{align*}
&\sum_{n\geq 4}\sum_{a=2}^{n-2}\sum_{j=1}^{a-1}\sum_{m=j+1}^{n-a}\binom{m-2}{j-1}u_{n-m}(a)x^{a-1}t^n=\sum_{a \geq 2}\sum_{j=1}^{a-1}\sum_{n \geq a+2}\sum_{m=j+1}^{n-a}\binom{m-2}{j-1}u_{n-m}(a)x^{a-1}t^n\\
&=\sum_{a\geq 2}x^{a-1}\sum_{j=1}^{a-1}\sum_{m \geq j+1}\binom{m-2}{j-1}\sum_{n \geq a+m}u_{n-m}(a)t^n=\sum_{a\geq2}x^{a-1}\sum_{j=1}^{a-1}\sum_{n \geq a}u_n(a)t^n\sum_{m \geq j+1}\binom{m-2}{j-1}t^m\\
&=\sum_{a \geq 2}x^{a-1}\sum_{n \geq a}u_n(a)t^n\sum_{j=1}^{a-1}\frac{t^{j+1}}{(1-t)^j}=\frac{t(1-t)}{1-2t}\sum_{a \geq 2}x^{a-1}\sum_{n \geq a}u_n(a)t^n\left(\frac{t}{1-t}-\left(\frac{t}{1-t}\right)^a\right)\\
&=\frac{t^2}{1-2t}\sum_{n\geq1}\sum_{a=1}^nu_n(a)\left(x^{a-1}-\left(\frac{xt}{1-t}\right)^{a-1}\right)t^n=\frac{t^2}{1-2t}\left(u(t;x)-u\left(t;\frac{xt}{1-x}\right)\right).
\end{align*}
Thus, multiplying both sides of \eqref{21-2lem1e1} by $x^{a-1}t^n$, summing over all $n \geq 2$ and $2 \leq a \leq n$, and combining the result with $\sum_{n \geq 2}u_n(1)t^n$ yields
$$u(t;x)-t=\frac{t}{1-x}\left(u(t;1)-x^2u(t;x)\right)-\frac{t^2}{1-2t}\left(u(t;x)-u\left(t;\frac{xt}{1-x}\right)\right),$$
which rearranges to give \eqref{21-2lem2e1}.
\end{proof}

There is the following explicit formula for the generating function of 21-2.

\begin{theorem}\label{21-2thm}
We have
\begin{align}\label{21-2thme1}
\sum_{n\geq1}c_n(21\text{-}2)t^n&=\frac{t+\sum_{j\geq1}\frac{t^{2j+1}(1-t)^{\binom{j-1}{2}-1}\prod_{i=1}^j((1-2t)(1-t)^i-(1-t)t^i)}{\prod_{i=1}^j((1-2t)(1-t)^{2i}-t^i(1-t)^{i+1}+t^{2i+1})}}{1-2t-\sum_{j\geq1}\frac{t^{2j+1}(1-2t)(1-t)^{\binom{j}{2}-1}\prod_{i=1}^j((1-2t)(1-t)^i-(1-t)t^i)}{((1-2t)(1-t)^{j-1}-t^j)\prod_{i=1}^j((1-2t)(1-t)^{2i}-t^i(1-t)^{i+1}+t^{2i+1})}}.
\end{align}
\end{theorem}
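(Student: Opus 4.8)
The plan is to apply the kernel method to the functional equation of Lemma~\ref{21-2lem2}, which we write as
$$K(x)u(t;x)=t+\frac{t}{1-x}u(t;1)+\frac{t^2}{1-2t}\,u\!\left(t;\tfrac{xt}{1-t}\right),\qquad K(x):=1+\frac{x^2t}{1-x}+\frac{t^2}{1-2t}.$$
Clearing denominators, the equation $K(x)=0$ is equivalent to $t(1-2t)x^2-(1-t)^2x+(1-t)^2=0$, whose discriminant is the perfect square $(1-t)^2(1-3t)^2$; hence $K(x)=0$ has the two roots $x=\frac{1-t}{t}$ and $x=\frac{1-t}{1-2t}$. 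Only $x_0:=\frac{1-t}{1-2t}$ lies in the region of convergence of the series defining $u(t;x)$ (it is $\approx 1$ for $t$ near $0$, whereas the other root is $\approx 1/t$ and indeed substituting it formally yields the contradiction $0=t$), and it enjoys the crucial property $\frac{x_0t}{1-t}=\frac{t}{1-2t}$, which is again small. Substituting $x=x_0$ into the functional equation and using $K(x_0)=0$ together with $\frac{t}{1-x_0}=-(1-2t)$, we obtain the relation
$$u(t;1)=\frac{t}{1-2t}+\frac{t^2}{(1-2t)^2}\,u\!\left(t;\tfrac{t}{1-2t}\right).\qquad(\star)$$

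Next I would feed $u\!\left(t;\tfrac{t}{1-2t}\right)$ back into the functional equation and iterate the substitution $x\mapsto\frac{xt}{1-t}$. Writing $y_i=\left(\frac{t}{1-t}\right)^i\frac{t}{1-2t}=\frac{t^{i+1}}{(1-2t)(1-t)^i}$, one has $y_i\to0$ and the accumulated coefficient $\prod_{i=0}^{k-1}\frac{t^2}{(1-2t)K(y_i)}$ behaves like $\left(\frac{t^2}{(1-t)^2}\right)^k$ as $k\to\infty$ (since $K(0)=\frac{(1-t)^2}{1-2t}$), so it tends to $0$; hence the remainder term $\bigl(\prod_{i=0}^{k-1}\tfrac{t^2}{(1-2t)K(y_i)}\bigr)u(t;y_k)$ vanishes in the limit, and one checks that the $k$-th term of the resulting series is $O(t^{2k+1})$ so that the iteration also converges as a formal power series in $t$. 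This yields
$$u\!\left(t;\tfrac{t}{1-2t}\right)=\sum_{k\ge0}\Bigl(\prod_{i=0}^{k-1}\tfrac{t^2}{(1-2t)K(y_i)}\Bigr)\left(\frac{t}{K(y_k)}+\frac{t}{(1-y_k)K(y_k)}\,u(t;1)\right),$$
which is linear in the single unknown $u(t;1)$. Substituting this into $(\star)$ and solving for $u(t;1)$ then produces $u(t;1)$ as a ratio whose numerator and denominator each contain one of these infinite sums.

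It remains to put this expression into the stated closed form \eqref{21-2thme1}. For this I would compute $K(y_i)=\dfrac{(1-t)^2(1-y_i)+y_i^2t(1-2t)}{(1-2t)(1-y_i)}$ explicitly at $y_i=\frac{t^{i+1}}{(1-2t)(1-t)^i}$; clearing denominators shows $1-y_i=\frac{(1-2t)(1-t)^i-t^{i+1}}{(1-2t)(1-t)^i}$, and that the numerator of $K(y_i)$, after multiplication by $(1-2t)(1-t)^{2i}$, equals $(1-2t)(1-t)^{2i+2}-t^{i+1}(1-t)^{i+2}+t^{2i+3}$, which upon shifting $i\mapsto i-1$ produces exactly the factors $(1-2t)(1-t)^{2i}-t^i(1-t)^{i+1}+t^{2i+1}$ appearing in \eqref{21-2thme1}; likewise the factor $(1-2t)(1-t)^i-(1-t)t^i$ arises as $(1-t)$ times the numerator of $1-y_{i-1}$. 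Assembling the product $\prod_{i=0}^{k-1}\frac{t^2}{(1-2t)K(y_i)}$ and collecting the resulting powers of $t$ and of $(1-t)$ — the latter accumulating as $\sum_i 2i$ and $\sum_i i$, whence the binomial exponents $\binom{j-1}{2}$ and $\binom{j}{2}$ — and finally reindexing $k=j-1$, one recovers \eqref{21-2thme1} after cancelling a common factor of $(1-2t)^{-1}$ from numerator and denominator. The main obstacle is precisely this last bookkeeping step: reducing the nested products $\prod_{i=0}^{k-1}\frac{t^2}{(1-2t)K(y_i)}$ to closed form, keeping careful track of all accumulated powers of $t$ and $(1-t)$ so that the binomial-coefficient exponents emerge correctly, and matching the index conventions of the displayed formula; everything else is a routine application of the kernel method and iteration.
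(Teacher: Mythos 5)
Your proposal is correct and follows essentially the same route as the paper: the same kernel root $x(t)=\frac{1-t}{1-2t}$ (the paper's \eqref{21-2funeq2} is exactly your $(\star)$), the same iteration of the functional equation along $x\mapsto \frac{xt}{1-t}$ to express $u\bigl(t;\frac{x(t)t}{1-t}\bigr)$ linearly in $u(t;1)$, and the same final simplification of $k\bigl(t;\frac{x(t)t^i}{(1-t)^i}\bigr)$ into the factors $(1-2t)(1-t)^{2i}-t^i(1-t)^{i+1}+t^{2i+1}$. Your algebraic identifications of the factors and exponents match the paper's computation, so only the routine bookkeeping you already flag remains.
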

\begin{proof}
We seek an expression for $u(t;1)$, which equals $\sum_{n\geq1}c_n(21\text{-}2)t^n$, by the definitions. First note that \eqref{21-2lem2e1} can be written as
\begin{equation}\label{21-2funeq1}
k(t;x)u(t;x)=a(t;x)+b(t)u\left(t;\frac{xt}{1-t}\right),
\end{equation}
where $k(t;x)=1+\frac{x^2t}{1-x}+\frac{t^2}{1-2t}$, $a(t;x)=t+\frac{t}{1-x}u(t;1)$ and $b(t)=\frac{t^2}{1-2t}$.
Taking $x=x(t)=\frac{1-t}{1-2t}$ in \eqref{21-2funeq1} cancels out the left-hand side and yields
\begin{equation}\label{21-2funeq2}
u(t;1)=x(t)-1+\frac{t(x(t)-1)}{1-2t}u\left(t;\frac{tx(t)}{1-t}\right).
\end{equation}
Iterating \eqref{21-2funeq1} for $t$ sufficiently close to zero gives
\begin{equation}\label{21-2funeq3}
u(t;x)=\sum_{j\geq0}
\frac{a\left(t;\frac{xt^j}{(1-t)^j}\right)}{k\left(t;\frac{xt^j}{(1-t)^j}\right)}\prod_{i=0}^{j-1}\frac{b(t)}{k\left(t;\frac{xt^i}{(1-t)^i}\right)}.
\end{equation}
Replacing $x$ with $\frac{x(t)t}{1-t}$ in \eqref{21-2funeq3}, we get
$$u\left(t;\frac{x(t)t}{1-t}\right)=\sum_{j\geq1}
\frac{a\left(t;\frac{x(t)t^j}{(1-t)^j}\right)}{k\left(t;\frac{x(t)t^j}{(1-t)^j}\right)}
\prod_{i=1}^{j-1}\frac{b(t)}{k\left(t;\frac{x(t)t^i}{(1-t)^i}\right)},$$
which is equivalent to
\begin{equation}\label{21-2funeq4}
u\left(t;\frac{x(t)t}{1-t}\right)=\sum_{j\geq1}
\frac{t^{2j-2}a\left(t;\frac{x(t)t^{j}}{(1-t)^j}\right)}
{(1-2t)^{j-1}\prod_{i=1}^jk\left(t;\frac{x(t)t^i}{(1-t)^i}\right)}.
\end{equation}

By \eqref{21-2funeq2}, \eqref{21-2funeq4} and the definition of $a(t;x)$, we have
\begin{align*}
u(t;1)&=x(t)-1+(x(t)-1)u(t;1)\sum_{j\geq1}
\frac{t^{2j}}
{(1-2t)^j\left(1-\frac{x(t)t^j}{(1-t)^j}\right)\prod_{i=1}^jk\left(t;\frac{x(t)t^i}{(1-t)^i}\right)}\\
&\quad+(x(t)-1)\sum_{j\geq1}
\frac{t^{2j}}
{(1-2t)^{j}\prod_{i=1}^jk\left(t;\frac{x(t)t^i}{(1-t)^i}\right)},
\end{align*}
which, upon solving for $u(t;1)$, implies
\begin{align*}
u(t;1)&=\frac{(x(t)-1)\left(1+\sum_{j\geq1}\frac{t^{2j}}{(1-2t)^j\prod_{i=1}^jk\left(t;\frac{x(t)t^i}{(1-t)^i}\right)}\right)}{1+(1-x(t))\sum_{j\geq1}\frac{t^{2j}}{(1-2t)^j\left(1-\frac{x(t)t^j}{(1-t)^j}\right)\prod_{i=1}^jk\left(t;\frac{x(t)t^i}{(1-t)^i}\right)}}.
\end{align*}
Note that for each $i \geq 1$,
\begin{align*}
&k\left(t;\frac{x(t)t^i}{(1-t)^i}\right)=1+\frac{x^2(t)t^{2i+1}}{(1-t)^i((1-t)^i-x(t)t^i)}+\frac{t^2}{1-2t}=\frac{(1-t)^2}{1-2t}+\frac{\frac{(1-t)^2}{1-2t}t^{2i}(x(t)-1)}{(1-t)^i((1-t)^i-x(t)t^i)}\\
&=\frac{(1-t)^2}{1-2t}\left(1+\frac{t^{2i}(x(t)-1)}{(1-t)^i((1-t)^i-x(t)t^i)}\right)=\frac{(1-t)^2}{1-2t}\left(1+\frac{t^{2i+1}}{(1-2t)(1-t)^{2i}-t^i(1-t)^{i+1}}\right)\\
&=\frac{(1-2t)(1-t)^{2i}-t^i(1-t)^{i+1}+t^{2i+1}}{(1-t)^{i-2}(1-2t)((1-2t)(1-t)^i-(1-t)t^i)},
\end{align*}
where we have made use of the fact $x^2(t)=\frac{(1-t)^2(x(t)-1)}{t(1-2t)}$ in the second equality.  Substituting this into the last formula above for $u(t;1)$, and simplifying, yields \eqref{21-2thme1}.
\end{proof}

Let $v_n(a)$ and $v(t;x)$ be defined analogously for the pattern 21-3 as $u_n(a)$ and $u(t;x)$ were above for 21-2.

\begin{lemma}\label{21-3lem1}
If $n \geq 2$, then
\begin{equation}\label{21-3lem1e1}
v_n(a)=\sum_{j=a-1}^{n-1}v_{n-1}(j)-\sum_{j=2}^{a-1}\sum_{m=j+1}^{n-a+1}\binom{m-2}{j-1}v_{n-m}(a-1), \quad 2 \leq a \leq n,
\end{equation}
with $v_n(1)=\sum_{j=1}^{n-1}v_{n-1}(j)$ for $n \geq 2$ and $v_1(1)=1$.
\end{lemma}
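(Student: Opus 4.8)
The plan is to mirror the argument given for Lemma~\ref{21-2lem1}, the essential difference being the structural role now played by the descent tops. Write $\mathcal{V}_{n,a}$ for the set of members of $\mathcal{C}_n(21\text{-}3)$ with last letter $a$, so that $v_n(a)=|\mathcal{V}_{n,a}|$. First I would record the basic characterization: $\pi=\pi_1\cdots\pi_n$ avoids $21\text{-}3$ if and only if, for every descent top $\pi_i$ (that is, for every $i$ with $\pi_i>\pi_{i+1}$), one has $\pi_k\le\pi_i$ for all $k\ge i+2$. The key consequence, which carries most of the weight, is that in any $21\text{-}3$-avoider whose last letter is $b$, \emph{every} descent top is at least $b$: a descent top at a position $\le n-2$ cannot be smaller than $\pi_n=b$ by the characterization, while a descent top at position $n-1$ exceeds $\pi_n=b$ outright. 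The initial conditions then follow quickly: appending a $1$ to an arbitrary member of $\mathcal{C}_{n-1}(21\text{-}3)$ never creates a copy of $21\text{-}3$, since the appended $1$, being minimal and terminal, could only play the role of the ``$3$'' of the pattern; this gives $v_n(1)=\sum_{j=1}^{n-1}v_{n-1}(j)$, together with $v_1(1)=1$. For $a=n$ one observes $\mathcal{V}_{n,n}=\{12\cdots n\}$ and checks that the right-hand side of \eqref{21-3lem1e1} collapses to $1$.

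For $2\le a\le n-1$ I would peel off the last letter. Each $\pi\in\mathcal{V}_{n,a}$ factors uniquely as $\pi=\rho a$ with $\rho\in\mathcal{C}_{n-1}(21\text{-}3)$ having last letter $j\ge a-1$ (forced by the Catalan condition). Conversely, for $j\ge a-1$, appending $a$ to $\rho\in\mathcal{V}_{n-1,j}$ creates a copy of $21\text{-}3$ precisely when $\rho$ has some descent top strictly less than $a$; since all descent tops of $\rho$ are $\ge j$, this cannot happen when $j\ge a$, and when $j=a-1$ it happens exactly when $a-1$ itself occurs as a descent top of $\rho$. Writing $S\subseteq\mathcal{V}_{n-1,a-1}$ for the set of such ``bad'' words, we obtain $v_n(a)=\sum_{j=a-1}^{n-1}v_{n-1}(j)-|S|$, so everything reduces to proving $|S|=\sum_{j=2}^{a-1}\sum_{m=j+1}^{n-a+1}\binom{m-2}{j-1}v_{n-m}(a-1)$.

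To count $S$, given $\rho\in S$ I would locate the \emph{rightmost} descent whose top equals $a-1$, say at position $n-m$, and factor $\rho=\alpha\beta$ with $\alpha=\rho_1\cdots\rho_{n-m}$ and $\beta=\rho_{n-m+1}\cdots\rho_{n-1}$. Applying the avoidance characterization to this descent forces every letter of $\beta$ to be $\le a-1$; combined with ``all descent tops of $\rho$ are $\ge a-1$'' and the maximality of $n-m$, this rules out any descent inside $\beta$, so $\beta$ is weakly increasing, running from $\rho_{n-m+1}=a-j$, where $j-1=(a-1)-\rho_{n-m+1}\ge1$ (hence $2\le j\le a-1$), up to $\rho_{n-1}=a-1$, and omitting none of the $j$ values in that range because $\rho$ is a Catalan word. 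Thus $\beta$ amounts to a composition of its length $m-1$ into $j$ positive parts, contributing $\binom{m-2}{j-1}$, with $m\ge j+1$ forced by $m-1\ge j$ and $m\le n-a+1$ forced by $\alpha$ having length at least $a-1$; and $\alpha$ is an \emph{arbitrary} member of $\mathcal{V}_{n-m,a-1}$. Conversely, gluing any such $\alpha$ onto any such $\beta$ produces a $21\text{-}3$-avoider: there is nothing to check within $\beta$, the new descent at position $n-m$ has top $a-1$, which dominates all of $\beta$, and every descent top of $\alpha$ is $\ge a-1$ and hence likewise dominates $\beta$, while $\alpha$ already avoids $21\text{-}3$ on its own; one checks moreover that the resulting word lies in $S$ with $n-m$ as its rightmost $(a-1)$-topped descent, so the factorization is a bijection. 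Summing $\binom{m-2}{j-1}v_{n-m}(a-1)$ over the admissible $j$ and $m$ gives $|S|$, and hence \eqref{21-3lem1e1}.

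The main obstacle, exactly as in the $21\text{-}2$ case, is justifying the \emph{decoupling} involved in the factorization of $S$: that the tail $\beta$ past the last $(a-1)$-topped descent is forced to be a single clean weakly increasing block, and that an arbitrary member of $\mathcal{V}_{n-m,a-1}$ may be glued back on without ever producing a forbidden pattern. Both rest on the preliminary observation that a $21\text{-}3$-avoider ending in $b$ has all descent tops $\ge b$, so the plan is to establish that fact carefully at the outset and lean on it throughout.
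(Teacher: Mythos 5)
Your proposal is correct and follows essentially the same route as the paper: characterize $21\text{-}3$-avoidance via descent tops, note that all descent tops of a word ending in $a-1$ are at least $a-1$, and count the excluded set by cutting at the rightmost descent with top $a-1$, which leaves a weakly increasing tail counted by $\binom{m-2}{j-1}$ glued to an arbitrary member of $\mathcal{V}_{n-m,a-1}$. The paper's (briefer) proof uses the equivalent formulation that the sequence of descent tops followed by the last letter is weakly decreasing, but the decomposition $\rho=\alpha\beta$ and the resulting count are identical to yours.
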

\begin{proof}
A subtraction argument similar to that given above for \eqref{21-2lem1e1} applies here which we briefly describe. Given $\pi=\pi_1\cdots\pi_n\in \mathcal{C}_n$, let $\pi'$ denote as before the subsequence consisting of the descent tops of $\pi$ together with the last letter.  Then we have that $\pi$ avoids 21-3 if and only if $\pi'$ is weakly decreasing.  Let $\mathcal{V}_{n,a}$ denote the subset $\mathcal{C}_n(21\text{-}3)$ whose members end in $a$.  Then one may append $a$ to any member of $\cup_{j=a-1}^{n-1}\mathcal{V}_{n-1,j}$ except for those belonging to $\mathcal{V}_{n-1,a-1}$ in which $a-1$ also occurs as a descent top.  Let $\rho$ denote such a member of $\mathcal{V}_{n-1,a-1}$, which may be decomposed as $\rho=\rho'(a-1)\rho''(a-1)$, where the first $a-1$ indicated corresponds to the rightmost descent top and $\rho''$ has first letter $a-j$ with $|\rho''|=m-2$.  The assumptions on $\rho$ imply  the restrictions $n \geq a+2$, $a \geq 3$, $j \in [2,a-1]$ and $m \in [j+1,n-a+1]$, with $\rho''$ weakly increasing.  Hence, we have that there are $\binom{m-2}{j-1}v_{n-m}(a-1)$ possibilities for $\rho$ for each $j$ and $m$.  Considering all $j$ and $m$ then gives the cardinality of the $\rho$ that must be excluded from $\cup_{j=a-1}^{n-1}\mathcal{V}_{n-1,j}$ when appending $a$, which accounts for the subtracted quantity in \eqref{21-3lem1e1}.
\end{proof}

Proceeding as before, one can show that the recurrence \eqref{21-3lem1e1} can be rewritten in terms of generating functions as follows.

\begin{lemma}\label{21-3lem2}
We have
\begin{equation}\label{21-3lem2e1}
\left(1+\frac{x^2t}{1-x}+\frac{xt^3}{(1-t)(1-2t)}\right)v(t;x)=t+\frac{t}{1-x}v(t;1)+\frac{xt^3}{(1-t)(1-2t)}v\left(t;\frac{xt}{1-t}\right).
\end{equation}
\end{lemma}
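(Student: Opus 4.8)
The plan is to convert the recurrence \eqref{21-3lem1e1} into the functional equation \eqref{21-3lem2e1} by multiplying by $x^{a-1}t^n$ and summing over the admissible ranges, following the same strategy used to derive \eqref{21-2lem2e1} in the proof of Lemma~\ref{21-2lem2}. The left-hand side will produce $\sum_{n\geq2}\sum_{a=1}^n v_n(a)x^{a-1}t^n = v(t;x)-t$, since the only contribution at $n=1$ is $v_1(1)=1$, and the task is to identify the generating-function image of each of the two sums on the right of \eqref{21-3lem1e1}.

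First I would treat the positive part together with the initial condition $v_n(1)=\sum_{j=1}^{n-1}v_{n-1}(j)$. Exchanging the order of summation in $\sum_{n\geq2}\sum_{a=2}^{n}\sum_{j=a-1}^{n-1}v_{n-1}(j)x^{a-1}t^n$ so as to sum $a$ from $2$ to $j+1$ first yields the geometric factor $\frac{x-x^{j+1}}{1-x}$; summing against $t^n$ and using $\sum_{n}\sum_j v_n(j)t^n=v(t;1)$ and $\sum_n\sum_j v_n(j)x^jt^n=xv(t;x)$ gives $\frac{xt}{1-x}\bigl(v(t;1)-xv(t;x)\bigr)$, and adding the $a=1$ contribution $\sum_{n\geq2}v_n(1)t^n=tv(t;1)$ produces the combined term $\frac{t}{1-x}v(t;1)-\frac{x^2t}{1-x}v(t;x)$. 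This step is essentially identical to its counterpart in the proof of Lemma~\ref{21-2lem2}.

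The step requiring the most care is the subtracted double sum $\displaystyle\sum_{n}\sum_{a}\sum_{j=2}^{a-1}\sum_{m=j+1}^{n-a+1}\binom{m-2}{j-1}v_{n-m}(a-1)x^{a-1}t^n$. Writing $V_b(t)=\sum_{N\geq b}v_N(b)t^N$ for the column generating functions, so that $\sum_{b\geq1}y^bV_b(t)=y\,v(t;y)$, I would exchange the order of summation, carry out $\sum_{m\geq j+1}\binom{m-2}{j-1}t^m=\frac{t^{j+1}}{(1-t)^j}$, substitute $b=a-1$, and sum the geometric series $\sum_{j=2}^{b}\frac{t^{j+1}}{(1-t)^j}=\frac{t^3}{(1-t)(1-2t)}\bigl(1-(t/(1-t))^{b-1}\bigr)$. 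The resulting expression $\frac{t^3}{(1-t)(1-2t)}\sum_{b\geq2}x^bV_b(t)\bigl(1-(t/(1-t))^{b-1}\bigr)$ then collapses: applying $\sum_{b\geq1}y^bV_b(t)=y\,v(t;y)$ with $y=x$ and with $y=xt/(1-t)$, one finds $\sum_{b\geq2}x^bV_b(t)=xv(t;x)-xV_1(t)$ and $\sum_{b\geq2}x^b(t/(1-t))^{b-1}V_b(t)=xv(t;\tfrac{xt}{1-t})-xV_1(t)$, so the boundary terms $V_1(t)=v(t;0)$ (present because the lower index is $b\geq2$, not $b\geq1$) cancel, leaving $\frac{xt^3}{(1-t)(1-2t)}\bigl(v(t;x)-v(t;\tfrac{xt}{1-t})\bigr)$. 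The main obstacle is precisely this bookkeeping — keeping track of the index shift $v_{n-m}(a-1)$ and the altered ranges $j\geq2$, $m\leq n-a+1$, and verifying the $v(t;0)$ cancellation.

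Finally I would assemble the pieces, obtaining
\[
v(t;x)-t=\frac{t}{1-x}v(t;1)-\frac{x^2t}{1-x}v(t;x)-\frac{xt^3}{(1-t)(1-2t)}\left(v(t;x)-v\left(t;\frac{xt}{1-t}\right)\right),
\]
and then moving the two terms proportional to $v(t;x)$ to the left-hand side yields \eqref{21-3lem2e1}. Apart from the third step, everything is routine generating-function algebra of the kind already carried out for Lemma~\ref{21-2lem2}.
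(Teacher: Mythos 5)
Your proposal is correct and follows exactly the route the paper intends: the paper omits the proof of this lemma, stating only that one proceeds as in the derivation of \eqref{21-2lem2e1}, and your computation carries out that analogous argument in detail, correctly handling the altered ranges $j\geq2$, $m\leq n-a+1$ and the shift $v_{n-m}(a-1)$, which produce the factor $\frac{xt^3}{(1-t)(1-2t)}$ in place of $\frac{t^2}{1-2t}$. The cancellation of the $v(t;0)$ boundary terms and the final rearrangement are both verified.
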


\begin{theorem}\label{21-3thm}
We have
\begin{equation}\label{21-3thme1}
\sum_{n\geq1}c_n(21\text{-}3)t^n=\frac{(x(t)-1)\left(1+\sum_{j\geq1}\frac{t^{\binom{j}{2}+3j}x(t)^j}{(1-t)^{\binom{j+1}{2}}(1-2t)^j\prod_{i=1}^jk\left(t;\frac{x(t)t^i}{(1-t)^i}\right)}\right)}{1+(1-x(t))\sum_{j\geq1}\frac{t^{\binom{j}{2}+3j}x(t)^j}{(1-t)^{\binom{j}{2}}(1-2t)^j((1-t)^j-x(t)t^j)\prod_{i=1}^jk\left(t;\frac{x(t)t^i}{(1-t)^i}\right)}},
\end{equation}
where $x(t)=\frac{1-3t+2t^2-t^3-\sqrt{1-10t+37t^2-62t^3+46t^4-12t^5+t^6}}{2t(1-3x+t^2)}$ and $k(t;x)=1+\frac{x^2t}{1-x}+\frac{xt^3}{(1-t)(1-2t)}$.
\end{theorem}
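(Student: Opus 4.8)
The plan is to follow exactly the blueprint used in the proof of Theorem~\ref{21-2thm}, since Lemma~\ref{21-3lem2} gives a functional equation of the same shape as \eqref{21-2lem2e1}, namely
\begin{equation*}
k(t;x)v(t;x)=a(t;x)+b(t;x)v\!\left(t;\frac{xt}{1-t}\right),
\end{equation*}
where now $k(t;x)=1+\frac{x^2t}{1-x}+\frac{xt^3}{(1-t)(1-2t)}$, $a(t;x)=t+\frac{t}{1-x}v(t;1)$, and $b(t;x)=\frac{xt^3}{(1-t)(1-2t)}$. The one genuine difference from the $21\text{-}2$ case is that $b$ now depends on $x$, so the iteration of the functional equation will produce a product $\prod_{i=0}^{j-1}b\!\left(t;\frac{xt^i}{(1-t)^i}\right)$ rather than a pure power $b(t)^j$; this product is what generates the powers $t^{\binom{j}{2}+3j}$, $x(t)^j$, and the denominators $(1-t)^{\binom{j+1}{2}}$ and $(1-t)^{\binom{j}{2}}$ visible in \eqref{21-3thme1}.

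First I would apply the kernel method: choose $x=x(t)$ so that the coefficient $k(t;x)$ on the left is killed, i.e. solve $1+\frac{x^2t}{1-x}+\frac{xt^3}{(1-t)(1-2t)}=0$ for $x$. Clearing denominators gives a quadratic in $x$ whose two roots are $\frac{1-3t+2t^2-t^3\pm\sqrt{1-10t+37t^2-62t^3+46t^4-12t^5+t^6}}{2t(1-3t+t^2)}$, and I would take the branch that is a formal power series in $t$ (the one with the minus sign, which vanishes at $t=0$), giving the stated $x(t)$. Substituting $x=x(t)$ into the functional equation produces, exactly as in \eqref{21-2funeq2}, an expression for $v(t;1)$ in terms of $v\!\left(t;\frac{x(t)t}{1-t}\right)$. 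Second, I would iterate the full functional equation \eqref{21-2funeq3}-style to obtain $v(t;x)=\sum_{j\ge0}\frac{a(t;xt^j/(1-t)^j)}{k(t;xt^j/(1-t)^j)}\prod_{i=0}^{j-1}\frac{b(t;xt^i/(1-t)^i)}{k(t;xt^i/(1-t)^i)}$, valid for $|t|$ small, then specialize $x\mapsto \frac{x(t)t}{1-t}$ and simplify the resulting product of $b$-factors into the closed form $\frac{t^{\binom{j}{2}+3j}x(t)^j}{(1-t)^{\binom{j+1}{2}}(1-2t)^j}$ up to the $k$-product in the denominator (and track the companion factor $\frac{1}{(1-t)^j-x(t)t^j}$ coming from the $\frac{1}{1-x}$ inside $a$, which is where the $(1-t)^{\binom{j}{2}}((1-t)^j-x(t)t^j)$ factor in the second sum originates).

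Third, I would substitute these two series expansions back into the relation for $v(t;1)$ obtained from the kernel step, collect the terms proportional to $v(t;1)$ on the left, and solve. As in the $21\text{-}2$ proof, this yields $v(t;1)$ as a ratio of two series, and no further simplification of the $k$-factor is even attempted here — the final answer \eqref{21-3thme1} is deliberately left with $k\!\left(t;\frac{x(t)t^i}{(1-t)^i}\right)$ appearing literally in the denominators — so this last algebraic step is essentially bookkeeping. The main obstacle is the combinatorial derivation of Lemma~\ref{21-3lem2} from the recurrence \eqref{21-3lem1e1}, i.e. correctly carrying out the generating-function manipulations (the triple sum with the binomial $\binom{m-2}{j-1}$, summed via $\sum_{m\ge j+1}\binom{m-2}{j-1}t^m=\frac{t^{j+1}}{(1-t)^j}$, and then the geometric-type sum over $j$), but that lemma is stated as already given; granting it, the only remaining care is bookkeeping the $x$-dependence of $b$ through the iteration so that the exponents $\binom{j}{2}+3j$ and $\binom{j+1}{2}$ come out exactly right. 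I therefore expect the proof to mirror that of Theorem~\ref{21-2thm} almost verbatim, replacing "$b(t)$" by "$b(t;x)$" and adjusting the iterated product accordingly.
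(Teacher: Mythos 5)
Your proposal follows the paper's proof essentially verbatim: the same kernel substitution $x=x(t)$ annihilating $k(t;x)$, the same iteration of the functional equation with the now $x$-dependent factor $b(t;x)$, and the same back-substitution into the kernel relation followed by solving for $v(t;1)$; you also correctly identify that the $x$-dependence of $b$ is what produces the exponents $\binom{j}{2}+3j$ and the factors $(1-t)^{\binom{j+1}{2}}$. One small slip: the minus-sign branch of $x(t)$ tends to $1$ (not $0$) as $t\to0$, exactly as $\frac{1-t}{1-2t}$ does in the $21\text{-}2$ case, but this does not affect the argument.
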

\begin{proof}
First note that \eqref{21-3lem2e1} may be written as
\begin{equation}\label{21-3the1}
k(t;x)v(t;x)=a(t;x)+b(t;x)v\left(t;\frac{xt}{1-t}\right),
\end{equation}
where $a(t;x)=t+\frac{t}{1-x}v(t;1)$, $b(t;x)=\frac{xt^3}{(1-t)(1-2t)}$ and $k(t;x)$ is as defined.  Taking $x=x(t)$ in \eqref{21-3the1}, where $x(t)$ is as given, is seen to cancel out the left-hand side and implies
\begin{equation}\label{2-13the2}
v(t;1)=x(t)-1+\frac{t^2x(t)(x(t)-1)}{(1-t)(1-2t)}v\left(t;\frac{x(t)t}{1-t}\right).
\end{equation}
Iterating \eqref{21-3the1} gives
$$v(t;x)=\sum_{j\geq0}
\frac{a\left(t;\frac{xt^j}{(1-t)^j}\right)}{k\left(t;\frac{xt^j}{(1-t)^j}\right)}\prod_{i=0}^{j-1}\frac{b\left(t;\frac{xt^i}{(1-t)^i}\right)}{k\left(t;\frac{xt^i}{(1-t)^i}\right)},$$
and hence
\begin{align*}
v\left(t;\frac{x(t)t}{1-t}\right)&=\sum_{j\geq1}\frac{a\left(t;\frac{x(t)t^j}{(1-t)^j}\right)}{k\left(t;\frac{x(t)t^j}{(1-t)^j}\right)}\prod_{i=1}^{j-1}\frac{b\left(t;\frac{x(t)t^i}{(1-t)^i}\right)}{k\left(t;\frac{x(t)t^i}{(1-t)^i}\right)}=\sum_{j\geq1}\frac{t^{3j-3}a\left(t;\frac{x(t)t^j}{(1-t)^j}\right)\prod_{i=1}^{j-1}\frac{x(t)t^i}{(1-t)^i}}{(1-3t+2t^2)^{j-1}\prod_{i=1}^jk\left(t;\frac{x(t)t^i}{(1-t)^i}\right)}\\
&=\sum_{j\geq1}\frac{t^{\binom{j}{2}+3j-3}x(t)^{j-1}a\left(t;\frac{x(t)t^j}{(1-t)^j}\right)}{(1-t)^{\binom{j}{2}}(1-3t+2t^2)^{j-1}\prod_{i=1}^jk\left(t;\frac{x(t)t^i}{(1-t)^i}\right)}.
\end{align*}
By \eqref{2-13the2} and the definition of $a(t;x)$, we then get
\begin{align*}
v(t;1)&=x(t)-1+\frac{t^2x(t)(x(t)-1)}{(1-t)(1-2t)}\sum_{j\geq1}\frac{t^{\binom{j}{2}+3j-3}x(t)^{j-1}\left(t+\frac{tv(t;1)}{1-\frac{x(t)t^j}{(1-t)^j}}\right)}{(1-t)^{\binom{j}{2}}(1-3t+2t^2)^{j-1}\prod_{i=1}^jk\left(t;\frac{x(t)t^i}{(1-t)^i}\right)}\\
&=x(t)-1+(x(t)-1)\sum_{j\geq1}\frac{t^{\binom{j}{2}+3j}x(t)^{j}}{(1-t)^{\binom{j+1}{2}}(1-2t)^{j}\prod_{i=1}^jk\left(t;\frac{x(t)t^i}{(1-t)^i}\right)}\\
&\quad+(x(t)-1)v(t;1)\sum_{j\geq1}\frac{t^{\binom{j}{2}+3j}x(t)^{j}}{(1-t)^{\binom{j}{2}}(1-2t)^{j}((1-t)^j-x(t)t^j)\prod_{i=1}^jk\left(t;\frac{x(t)t^i}{(1-t)^i}\right)}.
\end{align*}
Solving for $v(t;1)$ in the last equality yields \eqref{21-3thme1}.
\end{proof}

\subsection{The pattern 31-2}

In order to determine a generating function formula for $c_n(31\text{-}2)$, we refine the number of 31-2 avoiders according to a pair of parameters as follows.  Given $n \geq 2$ and $1\leq b \leq a \leq n-1$, let $w_n(a,b)$ denote the number of members of $\mathcal{C}_n(31\text{-}2)$ that contain exactly $a$ 1's and $b$ runs of $1$'s.  Note that
\begin{equation}\label{31-2key}
c_n(31\text{-}2)=1+\sum_{a=1}^{n-1}\sum_{b=1}^aw_n(a,b), \qquad n \geq1,
\end{equation}
by the definitions, where the added one accounts for the all 1's Catalan sequence.  The array $w_n(a,b)$
is given recursively as follows.

\begin{lemma}\label{31-2lem1}
If $n \geq 3$ and $1 \leq b \leq a \leq n-1$, then
\begin{equation}\label{31-2lem1e1}
w_n(a,b)=\binom{a-1}{b-1}\binom{n-a}{b-1}+\sum_{c=1}^{n-a-1}\sum_{d=1}^cw_{n-a}(c,d)\binom{a-1}{b-1}\binom{c-d+1}{b-1},
\end{equation}
with $w_2(1,1)=1$.
\end{lemma}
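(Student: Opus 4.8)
The plan is to classify a member $\pi \in \mathcal{C}_n(31\text{-}2)$ with exactly $a$ ones occurring in $b$ runs according to what happens immediately after the final run of $1$'s. Write $\pi = \pi'\,\pi''$, where $\pi''$ is the maximal suffix that begins right after the last $1$ of $\pi$ (so $\pi''$ contains no $1$'s, and $|\pi''| = n-a$ since all $a$ ones lie in $\pi'$). The first key observation is a \emph{separation} property: because $\pi$ avoids $31\text{-}2$, no letter $\geq 3$ appearing in $\pi'$ may be followed (anywhere to its right in $\pi$, with a smaller-but-$\geq 2$ letter adjacent to it) — more precisely, I would argue that avoidance of $31\text{-}2$ forces every entry of $\pi'$ other than the $1$'s to equal $2$, because any occurrence of a letter $k \geq 3$ in $\pi'$, together with the $1$ that must precede the run it belongs to and a subsequent descent into a value in $[2,k-1]$, would create a $31\text{-}2$. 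Thus $\pi'$ is a word on $\{1,2\}$ starting with $1$, having exactly $a$ ones in $b$ runs, and $\pi'$ automatically avoids $31\text{-}2$ (too few distinct letters). The number of such $\pi'$ of a \emph{given length}: a word on $\{1,2\}$ beginning with $1$, with $a$ ones in $b$ runs of $1$'s, is determined by choosing how to split the $a$ ones into $b$ nonempty runs — $\binom{a-1}{b-1}$ ways — and interleaving $b$ (or $b-1$, depending on whether the word ends in a $2$-run) nonempty runs of $2$'s; I would record the count in the form that makes the two terms of \eqref{31-2lem1e1} emerge.

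The second step is to analyze the interaction between $\pi'$ and $\pi''$. Since $\pi'' $ contains no $1$'s, $\pi \in \mathcal{C}_n$ forces the first letter of $\pi''$ to be exactly $2$ (it cannot exceed $2 = \max(\pi') + 1$ unless $\pi'$ contains a $2$, and in any case a Catalan word cannot jump; a short argument pins it to $2$). Let $\psi$ be obtained from $\pi''$ by subtracting $1$ from every entry; then $\psi \in \mathcal{C}_{n-a}$, and I claim $\pi$ avoids $31\text{-}2$ if and only if $\psi$ avoids $31\text{-}2$ \emph{and} a compatibility condition between the run-structure of $1$'s in $\psi$ and the number of $2$-runs in $\pi'$ holds. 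The heart of the matter: an occurrence of $31\text{-}2$ with its ``$3$'' inside $\pi'$ is impossible once we know $\pi'$ is a $\{1,2\}$-word; an occurrence entirely inside $\pi''$ corresponds bijectively to one in $\psi$; and an occurrence straddling the boundary (``$3$'' a $2$ from $\pi'$, ``$1$'' a $1$ from $\psi$-part, ``$2$'' a $2$ from $\psi$-part) must be blocked, which is what limits how the $b-1$ (or $b$) runs of $2$'s in $\pi'$ may sit relative to the $1$'s of $\psi$. When $\pi''$ is empty ($n = a$, excluded here since $a \le n-1$) we'd get the degenerate first term alone; when $\pi''$ is nonempty, summing over $\psi \in \mathcal{C}_{n-a}$ with exactly $c$ ones in $d$ runs — there are $w_{n-a}(c,d)$ of these (plus the all-ones case, which I'd handle as the $\binom{a-1}{b-1}\binom{n-a}{b-1}$ term) — and counting the valid placements of $\pi'$'s $2$-runs among them gives the binomial factor $\binom{c-d+1}{b-1}$.

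The delicate point, and the step I expect to be the main obstacle, is pinning down exactly that factor $\binom{c-d+1}{b-1}$: one must show that a run of $2$'s in $\pi'$ may be inserted only in "safe" gaps relative to the first-letter-$2$ and the $1$-runs of $\psi$ (roughly, immediately before a run of $1$'s in $\psi$ creates a forbidden $2\cdot 1 \cdot 2$ straddling pattern, so the allowed positions are the $c - d + 1$ "end-of-$1$-run or very front" slots), and that choosing $b-1$ of these slots, together with the $\binom{a-1}{b-1}$ choices for the $1$-run composition, gives every valid $\pi$ exactly once. Concretely, the argument should establish that the $b-1$ runs of $2$'s in the "middle" part of $\pi'$ (after the first run of $1$'s) must each be placed at one of $c-d+1$ admissible locations determined by $\psi$, with repetition forbidden because consecutive $2$-runs would merge, yielding $\binom{c-d+1}{b-1}$. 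Once the combinatorics of this insertion is correctly set up, the recurrence \eqref{31-2lem1e1} follows by summing over $c$ and $d$, and the initial condition $w_2(1,1) = 1$ is immediate since $\mathcal{C}_2(31\text{-}2) = \{11, 12\}$ and only $12$ has exactly one $1$ in one run. I would also double-check \eqref{31-2lem1e1} numerically against the row $1,2,5,14,41,123,375,\ldots$ in Table \ref{tab2} via \eqref{31-2key}.
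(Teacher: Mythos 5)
Your proof rests on a ``separation property'' that is false: it is not true that every non-$1$ letter occurring before the last $1$ of $\pi\in\mathcal{C}_n(31\text{-}2)$ must equal $2$. For instance, $\pi=12311\in\mathcal{C}_5(31\text{-}2)$ (its only descent is the adjacent pair $31$, and no letter strictly between $1$ and $3$ occurs afterwards, so there is no occurrence of $31\text{-}2$); here $a=3$, $b=2$, the prefix up to the last $1$ is the whole word and contains a $3$. Similarly $1232111$ is an avoider with a $3$ preceding the final run of $1$'s. Your argument for the separation property tacitly assumes that descending from a letter $k\ge 3$ back down to $1$ forces an adjacent descent whose bottom is at least two below its top \emph{and} that some intermediate value occurs later; when the descent proceeds in unit steps ($3\to2\to1$) no such occurrence is created, and even a $3\to1$ drop is harmless if only $1$'s follow. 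A second, related error is the bookkeeping $|\pi''|=n-a$: the prefix $\pi'$ up to the last $1$ contains not only the $a$ ones but also all the letters separating the $b$ runs of $1$'s, so $|\pi''|<n-a$ whenever $b\ge2$. Consequently your suffix $\psi$ does not live in $\mathcal{C}_{n-a}$, and the decomposition cannot produce the terms $w_{n-a}(c,d)$ appearing in \eqref{31-2lem1e1}.

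The paper's proof uses a genuinely different decomposition that you would need to adopt: rather than splitting $\pi$ at its last $1$, one deletes \emph{all} $a$ of the $1$'s from a non-binary avoider $\pi$, obtaining a word $\rho$ of length $n-a$ on the alphabet $\{2,3,\ldots\}$ whose reduction by $1$ lies in $\mathcal{W}_{n-a,c,d}$ (here $c$ and $d$ are the number of $2$'s and of runs of $2$'s in $\rho$). One then shows that a run of $1$'s may be reinserted only at the very end of $\rho$ or directly before a $2$ that is not the first letter of a non-initial run of $2$'s (inserting before such a first letter creates a $31\text{-}2$ whose ``$3$'' is the predecessor of that $2$), giving $c-d+2$ admissible sites; since $\pi$ must begin with $1$, the initial site is forced and the remaining $b-1$ runs occupy $\binom{c-d+1}{b-1}$ choices of sites, with $\binom{a-1}{b-1}$ ways to distribute the $a$ ones among the $b$ runs. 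Your closing paragraph gropes toward this kind of site-counting, but you correctly identify it as unproven, and in your framework (where the ``host'' word is the suffix after the last $1$ rather than the $1$-deleted word) the count cannot come out right. The binary case, handled separately, supplies the first term exactly as you describe.
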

\begin{proof}
The initial condition when $n=2$ is clear, so assume $n \geq3$.  Let $\mathcal{W}_{n,a,b}$ denote the subset of $\mathcal{C}_n(31\text{-}2)$ enumerated by $w_n(a,b)$.  To show \eqref{31-2lem1e1}, we first enumerate the members $\pi \in \mathcal{W}_{n,a,b}$ consisting of only $1$'s and $2$'s (i.e., the binary members).  Note that there are $\binom{a-1}{b-1}$ ways of arranging $a$ $1$'s in $b$ runs within such $\pi$.  Once this has been done, we then place $n-a$ $2$'s in either $b$ or $b-1$ runs, the arrangements of which are synonymous with the weak compositions of $n-a-b+1$ with $b$ parts.  Then there are $\binom{(n-a-b+1)+b-1}{b-1}=\binom{n-a}{b-1}$ ways in which to arrange the $2$'s and hence there are $\binom{a-1}{b-1}\binom{n-a}{b-1}$ binary members of $\mathcal{W}_{n,a,b}$.

To enumerate the non-binary $\pi \in \mathcal{W}_{n,a,b}$, consider inserting exactly $a$ 1's (as $b$ runs) into $\rho \in \mathcal{W}_{n-a,c,d}$ for some $c$ and $d$, where $\rho$ is represented using the letters in $\{2,3,\ldots\}$. Observe that we may insert one or more $1$'s at the very end of $\rho$ or directly preceding any $2$ of $\rho$ except for the first $2$ in any non-initial runs of $2$'s.  To see this, note that inserting one or more 1's directly prior to the first $2$ in a non-initial run always introduces an occurrence of 31-2 in which the role of `3' is played by the predecessor of the 2 in $\rho$. Thus, there are $c-d+2$ potential sites altogether within $\rho$ in which one may insert a run of $1$'s.  Since $\pi$ must start with $1$ and contain exactly $b$ runs of $1$'s, there are $\binom{c-d+1}{b-1}$ choices for the sites of the runs within $\pi$.  Then the $a$ $1$'s are to be inserted into the $b$ chosen sites such that each receives at least one letter, which can be achieved in $\binom{a-1}{b-1}$ ways.  As there are $w_{n-a}(c,d)$ possibilities for $\rho$, allowing $c$ and $d$ to vary yields
$$\sum_{c=1}^{n-a-1}\sum_{d=1}^cw_{n-a}(c,d)\binom{a-1}{b-1}\binom{c-d+1}{b-1}$$
non-binary members of $\mathcal{W}_{n,a,b}$ altogether.  Combining this with the prior binary case yields \eqref{31-2lem1e1}.
\end{proof}

Define the generating function
$$w(t;x,y)=\sum_{n\geq2}\left(\sum_{a=1}^{n-1}\sum_{b=1}^aw_n(a,b)x^{a-1}y^{b-1}\right)t^n.$$
Then $w(t;x,y)$ satisfies the following functional equation.

\begin{lemma}\label{31-2lem2}
We have
\small\begin{align}
w(t;x,y)&=\frac{(1+x(y-1)t)t^2}{(1-xt)(1-(x+1)t-x(y-1)t^2)}+\frac{(1+x(y-1)t)t}{(1-xt)^2}w\left(t;\frac{1+x(y-1)t}{1-xt},\frac{1-xt}{1+x(y-1)t}\right).\label{31-2lem2e1}
\end{align}\normalsize
\end{lemma}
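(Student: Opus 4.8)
The plan is to derive \eqref{31-2lem2e1} directly from the recurrence \eqref{31-2lem1e1}: multiply both sides by $x^{a-1}y^{b-1}t^n$ and sum over all $n\geq 2$ and $1\leq b\leq a\leq n-1$. One first notes that \eqref{31-2lem1e1}, with the convention that the double sum over $c,d$ is empty when $n-a-1<1$, also holds at $n=2$, both sides reducing to $w_2(1,1)=1$; hence the summation may start at $n=2$ and the left-hand side produces precisely $w(t;x,y)$. The right-hand side then splits into a ``binary'' contribution coming from the term $\binom{a-1}{b-1}\binom{n-a}{b-1}$ and a ``non-binary'' contribution coming from the double sum over $c,d$, and I will evaluate each in closed form.

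For the binary part, writing $k=n-a$ and performing the sum over $b$ last, the contribution factors as
$$\sum_{b\geq1}y^{b-1}\left(\sum_{a\geq b}\binom{a-1}{b-1}x^{a-1}t^a\right)\left(\sum_{k}\binom{k}{b-1}t^{k}\right),$$
where the range of $k$ is $k\geq b-1$ subject also to the standing restriction $k=n-a\geq1$. The first inner sum equals $\dfrac{x^{b-1}t^b}{(1-xt)^b}$ by the negative binomial series, while the second equals $\dfrac{t^{b-1}}{(1-t)^b}$ for $b\geq2$ but only $\dfrac{t}{1-t}=\dfrac{1}{1-t}-1$ for $b=1$; thus replacing it uniformly by $\dfrac{t^{b-1}}{(1-t)^b}$ overcounts the $b=1$ term by exactly $\dfrac{t}{1-xt}$. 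Summing the resulting geometric series in $b$ gives the binary contribution as $\dfrac{t}{1-(x+1)t-x(y-1)t^2}-\dfrac{t}{1-xt}$, and combining over the common denominator $(1-xt)\bigl(1-(x+1)t-x(y-1)t^2\bigr)$ identifies it with the first term on the right of \eqref{31-2lem2e1}.

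For the non-binary part, put $N=n-a$, factor off $\sum_{a\geq b}\binom{a-1}{b-1}x^{a-1}t^a=\dfrac{x^{b-1}t^b}{(1-xt)^b}$ as before, and then sum over $b$ with $c,d$ held fixed; the binomial theorem collapses $\sum_{b\geq1}\binom{c-d+1}{b-1}v^{b-1}$ to $(1+v)^{c-d+1}$, where $v=\dfrac{xyt}{1-xt}$, leaving
$$\frac{t}{1-xt}\sum_{N\geq2}\sum_{c,d}w_N(c,d)(1+v)^{c-d+1}t^N=\frac{t(1+v)}{1-xt}\,w\bigl(t;\,1+v,\,(1+v)^{-1}\bigr).$$
Since $1+v=\dfrac{1+x(y-1)t}{1-xt}$, the arguments $\bigl(\tfrac{1+x(y-1)t}{1-xt},\tfrac{1-xt}{1+x(y-1)t}\bigr)$ together with the prefactor $\dfrac{t(1+v)}{1-xt}=\dfrac{(1+x(y-1)t)t}{(1-xt)^2}$ match the second term of \eqref{31-2lem2e1}. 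Adding the binary and non-binary contributions yields the claimed functional equation.

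The main obstacle, beyond keeping the rational-function bookkeeping straight, is the interchange of summation order needed to recognize $(1+v)^{c-d+1}$: one must sum over $b$ before summing over $c$ and $d$, a rearrangement of a several-fold series that is legitimate since all the series involved converge absolutely for $|t|$ small. A secondary subtlety is the off-by-one adjustment in the binary part arising from the constraint $a\leq n-1$ (equivalently $n-a\geq1$); it is precisely this correction that produces the factor $1+x(y-1)t$ in the numerator of the first term, so it must not be overlooked.
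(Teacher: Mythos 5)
Your proposal is correct and follows essentially the same route as the paper: multiply the recurrence \eqref{31-2lem1e1} by $x^{a-1}y^{b-1}t^n$, sum, evaluate the binary contribution via geometric/negative-binomial series (with the same off-by-one correction $-\tfrac{t}{1-xt}$ coming from the constraint $n-a\geq 1$), and collapse the non-binary contribution via the binomial theorem into $w$ evaluated at $\bigl(\tfrac{1+x(y-1)t}{1-xt},\tfrac{1-xt}{1+x(y-1)t}\bigr)$. The only differences are cosmetic orderings of the summations, so no further comment is needed.
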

\begin{proof}
First note
\begin{align*}
&\sum_{n\geq2}t^n\sum_{a=1}^{n-1}x^{a-1}\sum_{b=1}^a\binom{a-1}{b-1}\binom{n-a}{b-1}y^{b-1}=\sum_{a\geq1}x^{a-1}t^a\sum_{b=1}^a\binom{a-1}{b-1}y^{b-1}\sum_{n\geq1}\binom{n}{b-1}t^n\\
&=\sum_{a\geq1}x^{a-1}t^a\sum_{b=1}^a\binom{a-1}{b-1}y^{b-1}\sum_{n\geq0}\binom{n}{b-1}t^n-\sum_{a\geq1}x^{a-1}t^a\\
&=\sum_{a\geq1}\frac{x^{a-1}t^a}{1-t}\sum_{b=1}^a\binom{a-1}{b-1}\left(\frac{yt}{1-t}\right)^{b-1}-\frac{t}{1-xt}=\sum_{a\geq1}\frac{x^{a-1}t^a(1+(y-1)t)^{a-1}}{(1-t)^a}-\frac{t}{1-xt}\\
&=\frac{t}{1-(x+1)t-x(y-1)t^2}-\frac{t}{1-xt}=\frac{(1+x(y-1)t)t^2}{(1-xt)(1-(x+1)t-x(y-1)t^2)}
\end{align*}
and
\begin{align*}
&\sum_{n\geq3}t^n\sum_{a=1}^{n-1}x^{a-1}\sum_{b=1}^ay^{b-1}\sum_{c=1}^{n-a-1}\sum_{d=1}^cw_{n-a}(c,d)\binom{a-1}{b-1}\binom{c-d+1}{b-1}\\
&=\sum_{a\geq1}x^{a-1}t^a\sum_{b=1}^ay^{b-1}\sum_{c\geq1}\sum_{d=1}^c \binom{a-1}{b-1}\binom{c-d+1}{b-1}\sum_{n\geq c+1}w_n(c,d)t^n\\
&=\sum_{b\geq1}y^{b-1}\sum_{c\geq1}\sum_{d=1}^c\binom{c-d+1}{b-1}\sum_{n\geq c+1}w_n(c,d)t^n\sum_{a\geq b}\binom{a-1}{b-1}x^{a-1}t^a\\
&=\sum_{b\geq1}\sum_{c\geq1}\sum_{d=1}^c\binom{c-d+1}{b-1}\sum_{n\geq c+1}w_n(c,d)t^{n+1}\cdot\frac{(xyt)^{b-1}}{(1-xt)^b}\\
&=\frac{t}{1-xt}\sum_{c\geq1}\sum_{d=1}^c\sum_{n\geq c+1}w_n(c,d)t^{n}\sum_{b=1}^{c-d+2}\binom{c-d+1}{b-1}\left(\frac{xyt}{1-xt}\right)^{b-1}\\
&=\frac{t}{1-xt}\sum_{c\geq1}\sum_{d=1}^c\sum_{n\geq c+1}w_n(c,d)t^{n}\cdot\left(\frac{1+x(y-1)t}{1-xt}\right)^{c-d+1}\\
&=\frac{(1+x(y-1)t)t}{(1-xt)^2}\sum_{n\geq2}\left(\sum_{c=1}^{n-1}\sum_{d=1}^cw_n(c,d)\left(\frac{1+x(y-1)t}{1-xt}\right)^{c-1}\left(\frac{1-xt}{1+x(y-1)t}\right)^{d-1}\right)t^n\\
&=\frac{(1+x(y-1)t)t}{(1-xt)^2}w\left(t;\frac{1+x(y-1)t}{1-xt},\frac{1-xt}{1+x(y-1)t}\right).
\end{align*}
Thus, multiplying both sides of \eqref{31-2lem1e1} by $x^{a-1}y^{b-1}t^n$, and summing over all $n \geq 3$ and $1 \leq b \leq a \leq n-1$, leads to \eqref{31-2lem2e1}.
\end{proof}

The generating function for $c_n(31\text{-}2)$ may be expressed in terms of Chebyshev polynomials as follows.

\begin{theorem}\label{31-2th}
We have
\begin{equation}\label{31-2the1}
\sum_{n\geq1}c_n(31\text{-}2)t^n=\frac{t}{1-t}+\sum_{i\geq1}\frac{\prod_{j=1}^i\left(1+\frac{U_{j-1}(s)}{U_j(s)}\right)}{U_{i+1}(s)},
\end{equation}
where $s=\frac{1-t}{2t}$.
\end{theorem}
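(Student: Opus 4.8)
\noindent\emph{Proof proposal.} The plan is to compute $w(t;1,1)$ from the functional equation \eqref{31-2lem2e1} and then invoke \eqref{31-2key}, which gives $\sum_{n\geq1}c_n(31\text{-}2)t^n=\frac{t}{1-t}+w(t;1,1)$. The key first observation is that the substitution $(x,y)\mapsto\bigl(\frac{1+x(y-1)t}{1-xt},\frac{1-xt}{1+x(y-1)t}\bigr)$ appearing in \eqref{31-2lem2e1} preserves the product $xy$. Since we want $x=y=1$, for which $xy=1$, we may set $y=1/x$ throughout; writing $G(x)=w(t;x,1/x)$ and $\phi(x)=\frac{1+t-xt}{1-xt}$, equation \eqref{31-2lem2e1} then collapses to the one-variable functional equation $G(x)=A(x)+B(x)G(\phi(x))$, where $A(x)=\frac{(1+t-xt)t^2}{(1-xt)(1-(x+1)t-(1-x)t^2)}$ and $B(x)=\frac{(1+t-xt)t}{(1-xt)^2}$. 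Iterating this relation for $t$ sufficiently close to zero (each application of $B$ raises the order in $t$, so convergence is clear) yields $w(t;1,1)=G(1)=\sum_{i\geq0}A(x_i)\prod_{j=0}^{i-1}B(x_j)$, where $x_0=1$ and $x_{j+1}=\phi(x_j)$.

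The next step is to identify the orbit $\{x_j\}$ with a ratio of Chebyshev polynomials. Writing $x_j=p_j/q_j$ with $p_0=q_0=1$ and applying $\phi$, one obtains $p_{j+1}=-tp_j+(1+t)q_j$ and $q_{j+1}=-tp_j+q_j$, whence $p_{j+1}=q_{j+1}+tq_j$ and $q_{j+1}=(1-t)q_j-t^2q_{j-1}$ with $q_0=1$, $q_1=1-t$. Comparing this with $U_{j+1}=2sU_{j}-U_{j-1}$, the choice $s=\frac{1-t}{2t}$ gives $q_j=t^jU_j(s)$, and hence $x_j=\frac{p_j}{q_j}=\frac{U_j(s)+U_{j-1}(s)}{U_j(s)}=1+\frac{U_{j-1}(s)}{U_j(s)}$ for every $j\geq0$ (consistent with $x_0=1$ since $U_{-1}=0$).

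Finally, I would substitute these expressions into $\sum_{i\geq0}A(x_i)\prod_{j=0}^{i-1}B(x_j)$. Repeated use of the recurrence in the form $tU_{j+1}(s)=(1-t)U_j(s)-tU_{j-1}(s)$ gives the closed forms $1-x_jt=\frac{tU_{j+1}(s)}{U_j(s)}$, $1+t-x_jt=\frac{t(U_{j+1}(s)+U_j(s))}{U_j(s)}$ and $1-(x_i+1)t-(1-x_i)t^2=\frac{t^2U_{i+2}(s)}{U_i(s)}$; these in turn yield $B(x_j)=\frac{U_j(s)(U_{j+1}(s)+U_j(s))}{U_{j+1}(s)^2}$ and $A(x_i)=\frac{U_i(s)(U_{i+1}(s)+U_i(s))}{U_{i+1}(s)U_{i+2}(s)}$. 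The product $\prod_{j=0}^{i-1}B(x_j)$ then telescopes to $\frac{1}{U_i(s)}\prod_{j=1}^i\bigl(1+\frac{U_{j-1}(s)}{U_j(s)}\bigr)$, and multiplying by $A(x_i)$ and simplifying gives $A(x_i)\prod_{j=0}^{i-1}B(x_j)=\frac{1}{U_{i+2}(s)}\prod_{j=1}^{i+1}\bigl(1+\frac{U_{j-1}(s)}{U_j(s)}\bigr)$. Summing over $i\geq0$ and reindexing (replacing $i$ by $i-1$) yields $w(t;1,1)=\sum_{i\geq1}\frac{\prod_{j=1}^i(1+U_{j-1}(s)/U_j(s))}{U_{i+1}(s)}$, and adding $\frac{t}{1-t}$ produces \eqref{31-2the1}. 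I expect the main obstacle to be purely the bookkeeping of this last paragraph: one must carefully verify how the Chebyshev recurrence converts each of the three polynomial expressions in $x_i$ into a clean ratio of consecutive $U_j(s)$'s, after which the telescoping and reindexing are routine.
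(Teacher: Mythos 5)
Your proposal is correct and follows essentially the same route as the paper's proof: you iterate the functional equation \eqref{31-2lem2e1} along the orbit of $(1,1)$ under the substitution (exploiting that it preserves $xy=1$), express the orbit $x_j$ as a ratio of sequences satisfying the three-term recurrence $q_{j+1}=(1-t)q_j-t^2q_{j-1}$, identify $q_j=t^jU_j(s)$ with $s=\frac{1-t}{2t}$, and telescope. The only difference is bookkeeping: you convert each factor $A(x_i)$, $B(x_j)$ into Chebyshev form before multiplying, whereas the paper carries the numerators and denominators $\alpha_n,\beta_n$ through the product and converts at the end; both simplifications check out and yield \eqref{31-2the1}.
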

\begin{proof}
By \eqref{31-2key}, we seek a formula for $\frac{t}{1-t}+w(t;1,1)$. Define the sequence of ordered pairs $(x_n,y_n)$ recursively by
\begin{equation}\label{31-2the1}
(x_{n+1},y_{n+1})=\left(\frac{1+tx_n(y_n-1)}{1-tx_n},\frac{1-tx_n}{1+tx_n(y_n-1)}\right), \qquad n \geq0,
\end{equation}
with $(x_0,y_0)=(1,1)$.  Then by \eqref{31-2lem2e1} and since $x_ny_n=1$, we have
$$w(t;x_n,y_n)=\frac{t^2x_{n+1}}{1-t-t^2-t(1-t)x_n}+\frac{tx_{n+1}}{1-tx_n}w(t;x_{n+1},y_{n+1}), \qquad n \geq0,$$
and iteration gives
\begin{equation}\label{31-2the2}
w(t;1,1)=\sum_{i\geq0}\frac{t^{i+2}\prod_{j=1}^{i+1}x_j}{(1-t-t^2-t(1-t)x_i)\prod_{j=0}^{i-1}(1-tx_j)}.
\end{equation}

Let $x_n=\frac{\alpha_n}{\beta_n}$ for $n \geq0$, where $\alpha_n=\alpha_n(t)$ and $\beta_n=\beta_n(t)$ are sequences of polynomials which we seek to determine.  First note
$$x_{n+1}=\frac{1+tx_n(y_n-1)}{1-tx_n}=\frac{1-tx_n}{1-tx_n}+\frac{tx_ny_n}{1-tx_n}=1+\frac{t}{1-tx_n}$$
so that
$$\frac{\alpha_{n+1}}{\beta_{n+1}}=x_{n+1}=1+\frac{t}{1-t\frac{\alpha_n}{\beta_n}}=\frac{(1+t)\beta_n-t\alpha_n}{\beta_n-t\alpha_n}.$$
We thus obtain the system of recurrences
\begin{equation}\label{31-2the3}
\alpha_{n+1}=(1+t)\beta_n-t\alpha_n, \quad \beta_{n+1}=\beta_n-t\alpha_n, \qquad n \geq0,
\end{equation}
with $\alpha_0=\beta_0=1$.  From \eqref{31-2the3}, we have $\beta_{n+1}=\frac{\alpha_{n+2}+t\alpha_{n+1}}{1+t}$, and hence
$$\frac{\alpha_{n+2}+t\alpha_{n+1}}{1+t}=\frac{\alpha_{n+1}+t\alpha_{n}}{1+t}-t\alpha_n,$$
which implies the recurrence
\begin{equation}\label{31-2the4}
\alpha_{n+2}=(1-t)\alpha_{n+1}-t^2\alpha_n, \qquad n \geq0,
\end{equation}
with $\alpha_0=\alpha_1=1$.  Similarly, we have that $\beta_n$ satisfies this same recurrence, but with initial values $\beta_0=1$ and $\beta_1=1-t$.

Thus, by \eqref{31-2the2}, we have
\begin{align}
w(t;1,1)&=\sum_{i\geq0}\frac{t^{i+2}\prod_{j=1}^{i+1}\alpha_j}{\beta_{i+1}((1-t-t^2)\beta_i-t(1-t)\alpha_i)\prod_{j=0}^{i-1}(\beta_j-t\alpha_j)}=\sum_{i\geq0}\frac{t^{i+2}\prod_{j=1}^{i+1}\alpha_j}{\beta_{i+1}\beta_{i+2}\prod_{j=0}^{i-1}\beta_{j+1}}\notag\\
&=\sum_{i\geq1}\frac{t^{i+1}\prod_{j=1}^i\alpha_j}{\prod_{j=1}^{i+1}\beta_j}, \label{31-2the5}
\end{align}
where we have used the facts $\beta_{j+1}=\beta_j-t\alpha_j$ and $\beta_{i+2}=(1-t-t^2)\beta_i-t(1-t)\alpha_i$ for all $i,j\geq0$, which can be shown by induction.  A comparison with the initial values and recurrence of the quantity $\gamma_n=\frac{\alpha_n}{t^n}$, which can be obtained from \eqref{31-2the4}, reveals
$U_n(s)+U_{n-1}(s)=\gamma_n$, and hence $\alpha_n=t^n(U_n(s)+U_{n-1}(s))$ for all $n \geq1$, where $s=\frac{1-t}{2t}$. Similarly, we obtain $\beta_n=t^nU_n(s)$ for all $n$.  Substituting this into \eqref{31-2the5}, we get
$$w(t;1,1)=\sum_{i\geq1}\frac{t^{i+1}\prod_{j=1}^it^j(U_j(s)+U_{j-1}(s))}{\prod_{j=1}^{i+1}t^jU_j(s)}=\sum_{i\geq1}\frac{\prod_{j=1}^i\left(1+\frac{U_{j-1}(s)}{U_j(s)}\right)}{U_{i+1}(s)}, $$
which yields the desired formula.
\end{proof}

\noindent \emph{Remark:} It is possible to find a formula similar to \eqref{31-2the5} for $w(t;x,y)$ for all $x,y$ in some interval containing $1$ for all $t$ sufficiently small, say, $|t|\leq \frac{1}{16}$.  Note that the corresponding polynomials $\alpha_n$ and $\beta_n$ will satisfy the same recurrences as before for $n \geq 3$, but with the initial values $\alpha_1=1+x(y-1)t,\,\alpha_2=1-xt-xyt^2$ and $\beta_1=1-xt,\, \beta_2=1-(x+1)t-x(y-1)t^2$.

\subsection{The pattern 11-2}

To deal with this case, we first consider two preliminary arrays.  Given $n \geq1$ and $1 \leq a \leq n$, let $m_n(a)$ denote the cardinality of the subset of $\mathcal{C}_n$ whose members contain no levels and end in $a$.  Put $m_n(a)=0$ if $n \leq 0$ or $a \notin [n]$.  Let $p_n(a)$ for $a,n \geq 1$ denote the number of words $w=w_1\cdots w_n$ on the alphabet $[a]$ that satisfy $w_{i+1} \leq w_i+1$ for $1 \leq i \leq n-1$ and avoid 11-2, with $p_0(a):=1$ for all $a \geq 1$.  Given $n \geq 1$ and $1 \leq b \leq a$, let $p_n(a,b)$ enumerate the subset of those words counted by $p_n(a)$ which start with $b$.  The arrays $m_n(a)$ and $p_n(a,b)$ are given recursively as follows.

\begin{lemma}\label{11-2lem1}
If $n \geq 2$, then
\begin{equation}\label{11-2leme1}
m_n(a)=m_{n-1}(a-1)+\sum_{i=a+1}^{n-1}m_{n-1}(i), \qquad 1 \leq a \leq n-1,
\end{equation}
with $m_n(n)=1$ for all $n \geq 1$. If $n\geq2$, then
\begin{equation}\label{11-2leme2}
p_n(a,b)=p_{n-1}(a,b+1)+p_{n-2}(b)+\sum_{i=1}^{b-1}p_{n-1}(a,i), \qquad 1 \leq b \leq a-1,
\end{equation}
with $p_n(a,a)=p_{n-1}(a)$ for all $a,n \geq1$, where $p_n(a)=\sum_{b=1}^ap_n(a,b)$ for $n\geq 1$ and $p_0(a)=1$.
\end{lemma}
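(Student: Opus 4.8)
The plan is to establish both recurrences, together with the accompanying boundary identities, by direct combinatorial decomposition: in each case we delete an extremal letter (the last letter for $m_n(a)$, the first letter for $p_n(a,b)$ and for the identity $p_n(a,a)=p_{n-1}(a)$) and sort the resulting words into the terms appearing on the right-hand side.

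For $m_n(a)$, take a level-free Catalan word $w=w_1\cdots w_n$ with $w_n=a$ and $1\le a\le n-1$, and look at the penultimate letter $w_{n-1}$. The Catalan condition $w_n\le w_{n-1}+1$ forces $w_{n-1}\ge a-1$, while the absence of levels forbids $w_{n-1}=a$; hence either $w_{n-1}=a-1$, or $w_{n-1}=i$ for some $a+1\le i\le n-1$, the upper bound being the largest letter available in a Catalan word of length $n-1$. Deleting $w_n$ in the first case produces an arbitrary level-free Catalan word of length $n-1$ ending in $a-1$; deleting it in the second produces an arbitrary such word ending in $i$, and conversely one may reappend $a$ to any word ending in $i\ge a+1$ without creating a level or violating the Catalan condition. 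This yields \eqref{11-2leme1}. The boundary value $m_n(n)=1$ is immediate, since a level-free Catalan word of length $n$ whose last (hence largest) letter is $n$ must be $12\cdots n$; and the convention $m_n(a)=0$ for $a\notin[n]$ makes \eqref{11-2leme1} self-consistent at $a=n-1$, where the second sum is empty.

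For $p_n(a,b)$ with $1\le b\le a-1$, take a word $w=w_1\cdots w_n$ on $[a]$ with $w_{i+1}\le w_i+1$, avoiding $11\text{-}2$, and $w_1=b$; examine $w_2$, which lies in $\{1,\dots,b+1\}\subseteq[a]$ since $b+1\le a$. If $w_2=b+1$ or $w_2\le b-1$, then $w_1\ne w_2$, so $w_1$ participates in no occurrence of $11\text{-}2$; deleting $w_1$ gives an arbitrary word on $[a]$ of length $n-1$ whose first letter is $b+1$ (respectively, lies in $\{1,\dots,b-1\}$), and prepending $b$ back is always legal — no level is created and $w_2\le b+1$. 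These cases contribute $p_{n-1}(a,b+1)$ and $\sum_{i=1}^{b-1}p_{n-1}(a,i)$, respectively. If $w_2=b$, then $w_1w_2=bb$ is a level at the very front of $w$, so avoidance of $11\text{-}2$ forces $w_k\le b$ for all $k\ge 3$; thus $w$ lies entirely on the alphabet $[b]$, and $w_2\cdots w_n$ is an arbitrary word on $[b]$ of length $n-1$ starting with $b$ and avoiding $11\text{-}2$ — conversely, prepending $b$ to any such word creates the level $bb$ but no occurrence of $11\text{-}2$, since no later letter exceeds $b$. Hence this case contributes $p_{n-1}(b,b)$, and adding the three contributions gives \eqref{11-2leme2} once we know $p_{n-1}(b,b)=p_{n-2}(b)$. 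That last identity, which is the remaining claim, follows by the same deletion argument carried out at the top of the alphabet: removing the leading letter $a$ from a word counted by $p_n(a,a)$ yields an arbitrary word on $[a]$ of length $n-1$ avoiding $11\text{-}2$, and reinstating $a$ can at worst create the level $aw_2$ (when $w_2=a$), which still produces no occurrence of $11\text{-}2$ as nothing on $[a]$ exceeds $a$; so $p_n(a,a)=p_{n-1}(a)$, and in particular $p_{n-1}(b,b)=p_{n-2}(b)$.

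Almost everything here is mechanical, and I expect the only substantive point — the place most likely to trip one up — to be the observation that a level $bb$ placed at the \emph{start} of a $11\text{-}2$-avoiding word is far more restrictive than a level elsewhere: it forbids every subsequent letter from exceeding $b$ and hence collapses the ambient alphabet to $[b]$. Once that is recognized, the subcase $w_2=b$ of \eqref{11-2leme2} and the identity $p_n(a,a)=p_{n-1}(a)$ both fall out at once, and the remaining care is purely bookkeeping: checking the Catalan adjacency condition after each prepending or appending (immediate from $b\le a-1$ and the constraints on $w_2$), and verifying the small-$n$ and boundary instances ($n=1,2$, $a=n-1$, and the conventions $m_n(a)=0$ for $a\notin[n]$ and $p_0(a)=1$) by inspection.
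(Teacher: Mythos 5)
Your proof is correct and follows essentially the same route as the paper: delete the last letter and case on the penultimate one for \eqref{11-2leme1}, and case on the second letter for \eqref{11-2leme2}, with the key observation that an initial level $bb$ in an $11\text{-}2$-avoider forces all subsequent letters to be at most $b$. The only cosmetic difference is that in the subcase $w_2=b$ you delete one letter and invoke $p_{n-1}(b,b)=p_{n-2}(b)$, whereas the paper deletes both initial $b$'s at once; these are equivalent.
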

\begin{proof}
Considering the penultimate letter within a member of $\mathcal{C}_n$ enumerated by $m_n(a)$ implies \eqref{11-2leme1}.  Let $\mathcal{P}_{n,a,b}$ denote the class of words enumerated by $p_n(a,b)$ and $\mathcal{P}_{n,a}=\cup_{b=1}^a\mathcal{P}_{n,a,b}$.  To show \eqref{11-2leme2}, consider the second letter $i$ within $w=w_1\cdots w_n \in \mathcal{P}_{n,a,b}$, where $n \geq 2$ and $b<a$.  If $i=b+1$ or $i \in [b-1]$, then there are clearly $p_{n-1}(a,b+1)$ and $\sum_{i=1}^{b-1}p_{n-1}(a,i)$ possibilities, respectively.  On the other hand, if $i=b$, then no letters in $[b+1,a]$ can occur in $w$ and deleting the initial two $b$'s from $w$ results in an arbitrary member of $\mathcal{P}_{n-2,b}$.  Hence, there are $p_{n-2}(b)$ possibilities when $i=b$, and combining with the previous cases yields \eqref{11-2leme2}.  Finally, the values of $m_n(n)$ and $p_n(a,a)$ for $n \geq1$ follow from the definitions.
\end{proof}

The sequence $c_n(11\text{-}2)$ can be given in terms of the prior two arrays as follows.

\begin{lemma}\label{11-2lem2}
If $n \geq1$, then
\begin{equation}\label{11-2lem2e1}
c_n(11\text{-}2)=M_{n-1}+\sum_{i=1}^{n-1}\sum_{j=1}^im_i(j)p_{n-i-1}(j),
\end{equation}
where $m_k(j)$ and $p_k(j)$ are as in Lemma \ref{11-2lem1}.
\end{lemma}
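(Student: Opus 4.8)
The plan is to decompose a member $\pi=\pi_1\cdots\pi_n$ of $\mathcal{C}_n(11\text{-}2)$ according to the position of the \emph{last} level it contains (if any). First I would observe that if $\pi$ contains no level, then $\pi$ is counted by $M_{n-1}$, since the levelless members of $\mathcal{C}_n$ are in bijection with Motzkin paths of length $n-1$ (a fact already recalled in the excerpt, e.g.\ in the proof of Theorem \ref{22-1/32-1}); this accounts for the leading term $M_{n-1}$ in \eqref{11-2lem2e1}.

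Next, suppose $\pi$ does contain a level, and let the last level occupy positions $i$ and $i+1$, with common value $j=\pi_i=\pi_{i+1}$. Write $\pi=\alpha\, j\, j\, \beta$, where $\alpha=\pi_1\cdots\pi_{i-1}$ has length $i-1$ and $\beta=\pi_{i+2}\cdots\pi_n$ has length $n-i-1$. The key structural claims I would establish are: (i) $\alpha j$ is itself a Catalan word of length $i$ that contains no level and ends in $j$, hence is counted by $m_i(j)$; and (ii) because $\pi$ avoids $11\text{-}2$, no entry of $\beta$ may exceed $j$ — for otherwise the double $jj$ together with any later letter greater than $j$ would form an occurrence of $11\text{-}2$ — while $\beta$ contains no level by maximality of $i$, and $\beta$ inherits only the Catalan adjacency condition $\beta_{k+1}\le \beta_k+1$ from $\pi$ (the first letter of $\beta$ being unrestricted beyond lying in $[j]$, since $\pi_{i+1}=j$ already allows any value up to $j+1$, but values equal to $j+1$ are forbidden as $j+1>j$). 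Thus $\beta$ ranges over precisely the words on $[j]$ of length $n-i-1$ satisfying the growth condition and avoiding $11\text{-}2$ — that is, $\beta$ is counted by $p_{n-i-1}(j)$, using the convention $p_0(j)=1$ to cover the case $i=n-1$. Summing over the position $i\in[1,n-1]$ of the last level and over the common value $j\in[1,i]$ of that level gives $\sum_{i=1}^{n-1}\sum_{j=1}^i m_i(j)\,p_{n-i-1}(j)$, and adding the levelless count yields \eqref{11-2lem2e1}.

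The main obstacle I anticipate is verifying claim (ii) carefully, namely that the portion $\beta$ after the last level is genuinely \emph{unconstrained} as a $11\text{-}2$-avoiding word on $[j]$ with the Catalan growth condition — in particular checking that no interaction between $\alpha j$ and $\beta$ can create a forbidden $11\text{-}2$ pattern straddling the boundary. Since any occurrence of $11\text{-}2$ requires its ``$2$''-letter to lie strictly above the repeated ``$1$''-letter, and the only repeated adjacent pair with a later larger letter would have to use a level, one must argue that a level inside $\alpha$ followed by a large letter in $\beta$ (or the specified level $jj$ followed by such a letter) cannot occur: the former is impossible because $\alpha j$ is levelless, and the latter because every entry of $\beta$ lies in $[j]$. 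One must also confirm that prepending the levelless Catalan word $\alpha j$ to an arbitrary such $\beta$ (with the values of $\beta$ shifted to lie in $[j]$ and the junction $j,\beta_1$ respecting $\beta_1\le j+1$) reproduces an element of $\mathcal{C}_n(11\text{-}2)$ with last level exactly at position $i$, so that the decomposition is a genuine bijection. Once these adjacency checks are in place, the summation is immediate.
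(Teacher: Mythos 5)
There is a genuine error in your decomposition: you split $\pi$ at the \emph{last} level, but the formula \eqref{11-2lem2e1} requires splitting at the \emph{first} level. With your choice, claim (i) fails: the prefix $\alpha j$ preceding the last level need not be levelless. For instance, $\pi=12211\in\mathcal{C}_5(11\text{-}2)$ (the level $22$ is followed only by $1$'s, and the level $11$ is terminal), its last level sits at positions $4,5$ with $j=1$, and the prefix $\alpha j=1221$ contains the level $22$; so it is not counted by $m_4(1)$. Symmetrically, your treatment of $\beta$ is internally inconsistent: you correctly note that $\beta$ is levelless by maximality of $i$, but then assert that $\beta$ ranges over \emph{all} of the words counted by $p_{n-i-1}(j)$ --- yet $p_{n-i-1}(j)$ counts $j$-ary growth-restricted $11\text{-}2$-avoiders that may well contain levels (any level whose value is not later exceeded is permitted). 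So with the last-level split, neither factor matches the formula, and the product $m_i(j)p_{n-i-1}(j)$ does not enumerate your fibers.

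The paper's proof takes $i$ to be the index of the first letter of the \emph{leftmost} level, $\pi=\pi' j\pi''$ with $\pi'=\pi_1\cdots\pi_i$. Minimality of $i$ forces $\pi'$ to be levelless (counted by $m_i(j)$), and the level $jj$ at positions $i,i+1$ forces every later letter to be at most $j$, so $\pi''$ is an arbitrary $j$-ary growth-restricted word avoiding $11\text{-}2$ (counted by $p_{n-i-1}(j)$, levels and all). Your boundary-interaction checks in the final paragraph are the right kind of verification and transfer essentially verbatim to the first-level decomposition; the fix is simply to replace ``last'' by ``first'' throughout and to drop the claim that $\beta$ is levelless.
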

\begin{proof}
Let $\pi=\pi_1\cdots \pi_n \in \mathcal{C}_n(11\text{-}2)$.  If $\pi$ contains no levels, then there are $M_{n-1}$ possibilities for $\pi$, as previously noted.  So assume $\pi$ contains at least one level, with $i$ the smallest index corresponding to one and $\pi_i=\pi_{i+1}=j$.  Then $\pi$ can be decomposed as $\pi=\pi'j\pi''$, where $\pi'$ of length $i$ contains no levels and ends in $j$ and $\pi''$ is possibly empty.  Note $\pi''$ avoids 11-2 and is $j$-ary, due to the level involving the letter $j$ directly preceding it.  Thus, for each $i$ and $j$, there are $m_i(j)$ and $p_{n-i-1}(j)$ possibilities for $\pi'$ and $\pi''$, respectively, and allowing $i$ and $j$ to vary  yields all members of $\mathcal{C}_n(11\text{-}2)$ containing at least one level, which implies \eqref{11-2lem2e1}.
\end{proof}

Let $M_n(v)=\sum_{a=1}^nm_n(a)v^{a-1}$ for $n \geq1$ and $M(t;v)=\sum_{n\geq1}M_n(v)t^n$. Define
$$P(t;v,u)=\sum_{n\geq1}\sum_{a\geq1}\sum_{b=1}^ap_n(a,b)v^{a-1}u^{b-1}t^n.$$
Then $M(t;v)$ and $P(t;v,u)$ satisfy the following functional equations.

\begin{lemma}\label{11-2lem3}
We have
\begin{equation}\label{11-2lem3e1}
\left(1-tv+\frac{t}{1-v}\right)M(t;v)=t+\frac{t}{1-v}M(t;1)
\end{equation}
and
\begin{equation}\label{11-2lem3e2}
\left(1-\frac{t}{u}-\frac{tu}{1-u}\right)P(t;v,u)=\frac{t(1+t)}{(1-v)(1-uv)}+\left(\frac{t^2}{1-v}-\frac{tu}{1-u}\right)P(t;uv,1)-\frac{t}{u}P(t;v,0).
\end{equation}
\end{lemma}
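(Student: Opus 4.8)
The plan is to turn each array recurrence of Lemma~\ref{11-2lem1} into the stated functional equation by multiplying through by the appropriate monomial and summing, carefully handling the finite geometric sums that appear. For $M(t;v)$ this is short. Write $M_n(v)=\sum_{a=1}^{n-1}m_n(a)v^{a-1}+v^{n-1}$ (using $m_n(n)=1$), and substitute \eqref{11-2leme1} into the range $1\le a\le n-1$. The piece $\sum_{a}m_{n-1}(a-1)v^{a-1}$ shifts to $vM_{n-1}(v)-v^{n-1}$ (the subtracted term coming from $m_{n-1}(n-1)v^{n-2}$), while $\sum_{a=1}^{n-1}\sum_{i=a+1}^{n-1}m_{n-1}(i)v^{a-1}$, after swapping the order of summation and applying $\sum_{a=1}^{i-1}v^{a-1}=\tfrac{1-v^{i-1}}{1-v}$, collapses to $\tfrac{1}{1-v}\bigl(M_{n-1}(1)-M_{n-1}(v)\bigr)$. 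The two stray $v^{n-1}$ contributions cancel, leaving $M_n(v)=vM_{n-1}(v)+\tfrac{1}{1-v}\bigl(M_{n-1}(1)-M_{n-1}(v)\bigr)$ for $n\ge2$, with $M_1(v)=1$; multiplying by $t^n$, summing over $n\ge2$, and collecting the $M(t;v)$ terms gives \eqref{11-2lem3e1}.

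For $P(t;v,u)$ I would set $\widetilde P_n(v,u)=\sum_{a\ge1}\sum_{b=1}^a p_n(a,b)v^{a-1}u^{b-1}$ and split off the diagonal $b=a$ from the range $1\le b\le a-1$. Since $p_n(a,a)=p_{n-1}(a)$ and $p_0(a)=1$, the diagonal terms contribute $\tfrac{t}{1-uv}+tP(t;uv,1)$, where $P(t;uv,1)=\sum_{n\ge1}\sum_a p_n(a)(uv)^{a-1}t^n$. For the off-diagonal part I would apply \eqref{11-2leme2} term by term; since that recurrence holds only for $n\ge2$, the $n=1$ layer $\widetilde P_1(v,u)=\sum_a v^{a-1}\sum_{b=1}^a u^{b-1}=\tfrac{1}{(1-v)(1-uv)}$ (using $p_1(a,b)=1$) must be added back separately. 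The term $p_{n-1}(a,b+1)$, after reindexing $b\mapsto b-1$ and recognizing $P(t;v,0)=\sum_{n}\sum_a p_n(a,1)v^{a-1}t^n$, gives $\tfrac1u\bigl(P(t;v,u)-P(t;v,0)\bigr)$ at the generating-function level; the term $p_{n-2}(b)$, where only the constraint $b\le a-1$ involves $a$, contributes $\tfrac{v}{1-v}$ times a twice-shifted copy of $\sum_a p_m(a)(uv)^{a-1}$; and the inner sum $\sum_{i=1}^{b-1}p_{n-1}(a,i)$ requires interchanging the $b$- and $i$-sums, using $\sum_{b=i+1}^{a-1}u^{b-1}=\tfrac{u^i-u^{a-1}}{1-u}$, and then peeling off diagonal entries again via $p_{n-1}(a,a)=p_{n-2}(a)$ and $\sum_{i<a}p_{n-1}(a,i)=p_{n-1}(a)-p_{n-2}(a)$.

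After assembling the pieces I would collect like terms. Writing $Q_m(w)=\sum_a p_m(a)w^{a-1}$, the coefficients of $Q_{n-1}(uv)$ and $Q_{n-2}(uv)$ reduce to $-\tfrac{u}{1-u}$ and $\tfrac{1}{1-v}$ respectively (the latter using $\tfrac{v}{1-v}+1=\tfrac{1}{1-v}$), the coefficient of $\widetilde P_{n-1}(v,u)$ becomes $\tfrac1u+\tfrac{u}{1-u}=\tfrac{1-u+u^2}{u(1-u)}$, and there is a $-\tfrac1u\widetilde P_{n-1}(v,0)$ term. Translating each $\sum_{n\ge2}(\cdot)t^n$ into generating functions — via $\sum_{n\ge2}Q_{n-1}(uv)t^n=tP(t;uv,1)$, $\sum_{n\ge2}Q_{n-2}(uv)t^n=t^2\bigl(\tfrac{1}{1-uv}+P(t;uv,1)\bigr)$, $\sum_{n\ge2}\widetilde P_{n-1}(v,u)t^n=tP(t;v,u)$, $\sum_{n\ge2}\widetilde P_{n-1}(v,0)t^n=tP(t;v,0)$, and $\sum_{n\ge2}\widetilde P_n(v,u)t^n=P(t;v,u)-t\widetilde P_1(v,u)$ — produces an identity in which $P(t;v,u)$ is multiplied by $1-\tfrac{(1-u+u^2)t}{u(1-u)}=1-\tfrac tu-\tfrac{tu}{1-u}$, the two constant terms combine as $\tfrac{t}{(1-v)(1-uv)}+\tfrac{t^2}{(1-v)(1-uv)}=\tfrac{t(1+t)}{(1-v)(1-uv)}$, and the remaining terms are exactly $\bigl(\tfrac{t^2}{1-v}-\tfrac{tu}{1-u}\bigr)P(t;uv,1)-\tfrac tu P(t;v,0)$, giving \eqref{11-2lem3e2}.

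The main obstacle is the bookkeeping in the $P$-computation: there are three nested-sum manipulations, the off-diagonal recurrence is valid only for $n\ge2$ so the $n=1$ layer and the convention $p_0(a)=1$ must be tracked, and diagonal entries $p_{n-1}(a,a)=p_{n-2}(a)$ have to be extracted repeatedly so that all the $Q_{n-1}$ and $Q_{n-2}$ contributions collapse to the two clean coefficients $-\tfrac{u}{1-u}$ and $\tfrac{1}{1-v}$. Each individual step is a routine geometric-series identity; the care is entirely in keeping the index shifts ($b\mapsto b-1$ and $n\mapsto n-1$ or $n-2$) consistent across all the pieces.
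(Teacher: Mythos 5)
Your proposal is correct and follows essentially the same route as the paper: both proofs translate the recurrences of Lemma~\ref{11-2lem1} directly into functional equations by multiplying by $t^nv^{a-1}u^{b-1}$ and summing, using the same geometric-series collapses. The only (cosmetic) difference is that the paper absorbs the diagonal case $p_n(a,a)=p_{n-2}(a)+\sum_{i=1}^{a-1}p_{n-1}(a,i)$ into the off-diagonal sums by extending their ranges to $b=a$, whereas you peel the diagonal off separately and track the resulting $Q_{n-1}(uv)$ and $Q_{n-2}(uv)$ corrections; the two bookkeepings reconcile (e.g.\ $\tfrac{v}{1-v}+1=\tfrac{1}{1-v}$) and yield the identical equation.
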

\begin{proof}
Multiplying both sides of \eqref{11-2leme1} by $t^nv^{a-1}$, and summing over all $n \geq 2$ and $1 \leq a \leq n-1$, yields
$$M(t;v)=t+tvM(t;v)
+\frac{t}{1-v}(M(t;1)-M(t;v)),$$
which implies \eqref{11-2lem3e1}. Similarly, from \eqref{11-2leme2}, we get
\begin{align*}
P(t;v,u)&=t\sum_{a\geq1}\sum_{b=1}^a p_1(a,b)v^{a-1}u^{b-1}+\sum_{n\geq2}\sum_{a\geq1}\sum_{b=1}^a p_n(a,b)v^{a-1}u^{b-1}t^n\\
&=t(1+t)\sum_{a\geq1}v^{a-1}\sum_{b=1}^au^{b-1}
+\sum_{n\geq2}\sum_{a\geq2}\sum_{b=1}^{a-1}p_{n-1}(a,b+1)v^{a-1}u^{b-1}t^n\\
&\quad+\sum_{n\geq3}\sum_{a\geq1}\sum_{b=1}^a\sum_{i=1}^bp_{n-2}(b,i)v^{a-1}u^{b-1}t^n
+\sum_{n\geq2}\sum_{a\geq2}\sum_{b=2}^a\sum_{i=1}^{b-1}p_{n-1}(a,i)v^{a-1}u^{b-1}t^n\\
&=\frac{t(1+t)}{(1-v)(1-uv)}+\frac{t}{u}(P(t;v,u)-P(t;v,0))+\frac{t^2}{1-v}P(t;uv,1)\\
&\quad+\frac{tu}{1-u}(P(t;v,u)-P(t;uv,1)),
\end{align*}
which rearranges to give \eqref{11-2lem3e2}.
\end{proof}

The functional equations from the prior lemma may be solved explicitly.

\begin{lemma}\label{11-2lem4}
We have
\begin{equation}\label{11-2lem4e1}
M(t;v)=\frac{1+(1-2v)t-\sqrt{1-2t-3t^2}}{2(1-v+(1-v)t+v^2t)}
\end{equation}
and
\begin{equation}\label{11-2lem4e2}
P(t;v,1)=\sum_{j\geq0}b\left(\frac{t^jT^{2j}(t)v}{(1+t)^j}\right)\prod_{i=0}^{j-1}a\left(\frac{t^iT^{2i}(t)v}{(1+t)^i}\right),
\end{equation}
where
$$a(v)=\frac{(t-v)t^3T^8(t)}{(1+t)^3(1+t-vT^2(t))}, \quad b(v)=\frac{tT^3(t)(2-T(t))}{(1-v)(1+v-vT(t))(1+t-vT^2(t))}$$
and
$$T(t)=\frac{1+t-\sqrt{1-2t-3t^2}}{2t}.$$
\end{lemma}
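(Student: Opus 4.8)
The plan is to solve the two functional equations by the kernel method, both times keying on the algebraic series $T=T(t)$, which by its definition satisfies $tT^2-(1+t)T+(1+t)=0$, equivalently $tT^2=(1+t)(T-1)$.

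\textbf{The equation for $M(t;v)$.} Multiplying \eqref{11-2lem3e1} through by $1-v$, the kernel $1-tv+\frac{t}{1-v}$ is seen to vanish exactly when $tv^2-(1+t)v+(1+t)=0$, i.e.\ when $v=T$ (the branch that is an ordinary power series in $t$). Since each $t$-coefficient of $M(t;v)$ is a polynomial in $v$ (because $m_n(a)=0$ for $a>n$), this substitution is permissible, and it forces $t+\frac{t}{1-T}M(t;1)=0$, hence $M(t;1)=T-1$. Substituting this back into \eqref{11-2lem3e1} and clearing the denominator gives $\bigl(tv^2-(1+t)v+(1+t)\bigr)M(t;v)=t(T-v)$; writing $tT=\tfrac12\bigl(1+t-\sqrt{1-2t-3t^2}\bigr)$ and simplifying yields \eqref{11-2lem4e1}.

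\textbf{The equation for $P(t;v,1)$.} The kernel of \eqref{11-2lem3e2} depends on $u$ alone, and, upon clearing denominators, it vanishes precisely when $(1+t)u^2-(1+t)u+t=0$. Its two roots are $u_0=\frac{tT}{1+t}$ (a power series in $t$) and $u_1=\frac1T$, and they satisfy $u_0+u_1=1$, $u_0u_1=\frac{t}{1+t}$, and $u_0/u_1=\frac{tT^2}{1+t}=:\lambda$. Substituting $u=u_0$ and then $u=u_1$ into \eqref{11-2lem3e2} annihilates the left-hand side in each case and produces two identities, each expressing $P(t;v,0)$ in terms of $P(t;u_0v,1)$, resp.\ of $P(t;u_1v,1)$. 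Eliminating $P(t;v,0)$ between them yields a single relation between $P(t;u_0v,1)$ and $P(t;u_1v,1)$; replacing $v$ by $vT\,(=v/u_1)$ turns $u_1v$ into $v$ and $u_0v$ into $\lambda v$, so the relation becomes $P(t;v,1)=b(v)+a(v)\,P(t;\lambda v,1)$. One then checks, using $tT^2=(1+t)(T-1)$ together with $u_0+u_1=1$ and $u_0u_1=\frac{t}{1+t}$, that the coefficients collapse to the stated $a(v)$ and $b(v)$.

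Finally, since $\lambda$ and $a(v)$ each have strictly positive order in $t$, the functional equation $P(t;v,1)=b(v)+a(v)P(t;\lambda v,1)$ may be iterated (the remainder term vanishing $t$-adically) to give $P(t;v,1)=\sum_{j\ge0}b(\lambda^jv)\prod_{i=0}^{j-1}a(\lambda^iv)$, which is \eqref{11-2lem4e2}. The only subtle point is that $u_1$ has constant term $1$; this causes no trouble because each $t$-coefficient of $P(t;v,u)$ is a rational function of $u$ that is regular at $u=1$ (indeed its value there is the corresponding coefficient of $P(t;v,1)$). I expect the main obstacle to be the bookkeeping in the second part: carrying the $T$-identities through the elimination of $P(t;v,0)$ and through the substitution $v\mapsto vT$, so as to recognize the closed forms of $a$ and $b$, is routine but is the likeliest place for computational error.
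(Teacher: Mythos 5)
Your proposal is correct and follows essentially the same route as the paper: the kernel method with $v=T$ for $M(t;v)$, and for $P$ the two roots $u=\tfrac{1}{T}$ and $u=\tfrac{tT}{1+t}$ of the $u$-kernel, leading to the one-variable recursion $P(t;v,1)=b(v)+a(v)P\bigl(t;\tfrac{vtT^2}{1+t},1\bigr)$ that is then iterated. The only cosmetic difference is that the paper solves for $P(t;v,0)$ at the root $u=1/T$ and substitutes it back before inserting the second root, whereas you plug in both roots and eliminate $P(t;v,0)$ between the resulting pair of equations; these are the same computation.
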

\begin{proof}
Letting $v=v_0=\frac{1+t-\sqrt{1-2t-3t^2}}{2t}$ in \eqref{11-2lem3e1} gives $M(t;1)=v_0-1$, and substituting this back into \eqref{11-2lem3e1} implies \eqref{11-2lem4e1}.  Letting $u=\frac{1}{T}$ in \eqref{11-2lem3e2} and solving for $P(t;v,0)$, where $T=T(t)$ is as defined, yields
\begin{equation}\label{11-2lem4e3}
P(t;v,0)=\frac{1}{T}\left(\frac{t}{1-v}-\frac{1}{T-1}\right)P(t;v/T,1)+\frac{1+t}{(1-v)(T-v)}.
\end{equation}
Substituting \eqref{11-2lem4e3} into \eqref{11-2lem3e2}, we find
\begin{align}
\left(1-\frac{t}{u}-\frac{tu}{1-u}\right)P(t;v,u)&=\frac{t(t+1)}{(1-v)(1-uv)}+\left(\frac{t^2}{1-v}-\frac{tu}{1-u}\right)P(t;uv,1)\notag\\
&\quad-\frac{t}{u}\left(\frac{1}{T}\left(\frac{t}{1-v}-\frac{1}{T-1}\right)P(t;v/T,1)+\frac{1+t}{(1-v)(T-v)}\right).\label{11-2lem4e4}
\end{align}

To solve \eqref{11-2lem4e4}, we let $u$ be the other root of the kernel equation $1-\frac{t}{u}-\frac{tu}{1-u}=0$, namely, $u=\frac{tT}{1+t}$, to obtain
\begin{align*}
&\frac{1+t}{T}\left(\frac{1}{T}\left(\frac{t}{1-v}-\frac{1}{T-1}\right)P(t;v/T,1)
+\frac{1+t}{(1-v)(T-v)}\right)\\
&=\left(\frac{t^2}{1-v}-\frac{t^2T}{1+t-tT}\right)P\left(t;\frac{vtT}{1+t},1\right)+\frac{t(1+t)^2}{(1-v)(1+t-vtT)}.
\end{align*}
Replacing $v$ by $vT$, we have
\begin{align*}
\frac{1+t}{T^2}\left(\frac{t}{1-vT}-\frac{1}{T-1}\right)P(t;v,1)&=\left(\frac{t^2}{1-vT}-\frac{t^2T}{1+t-tT}\right)P\left(t;\frac{vtT^2}{1+t},1\right)\\
&\quad+\frac{(1+t)^2}{1-vT}\left(\frac{t}{1+t-vtT^2}-\frac{1}{(1-v)T^2}\right).
\end{align*}
After several algebraic steps and frequent use of the fact $tT^2=(t+1)(T-1)$, the last equation can be written as
\begin{equation}\label{11-2lem4e5}
P(t;v,1)=a(v)P\left(t;\frac{vtT^2}{1+t},1\right)+b(v),
\end{equation}
where $a(v)$ and $b(v)$ are as defined.  Iteration of \eqref{11-2lem4e5} for $t$ sufficiently close to zero then yields \eqref{11-2lem4e2}.
\end{proof}

One can now obtain an explicit formula for the generating function of $c_n(11\text{-}2)$.

\begin{theorem}\label{11-2th}
We have
\begin{equation}\label{11-2the1}
\sum_{n\geq1}c_n(11\text{-}2)t^n
=\frac{(1+t)(1-t-\sqrt{1-2t-3t^2})}{2t}
+\frac{t^2T(t)}{1+t}\sum_{j\geq1}b\left(\frac{t^{j}T^{2j-1}(t)}{(1+t)^{j}}\right)\prod_{i=1}^{j-1}a\left(\frac{t^{i}T^{2i-1}(t)}{(1+t)^{i}}\right),
\end{equation}
where $a(v)$, $b(v)$ and $T(t)$ are as in Lemma \ref{11-2lem4}.
\end{theorem}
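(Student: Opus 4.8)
The plan is to turn the combinatorial identity \eqref{11-2lem2e1} of Lemma~\ref{11-2lem2} into a generating function identity, feeding in the closed forms for $M(t;v)$ and $P(t;v,1)$ from Lemma~\ref{11-2lem4}. First I would record that $\sum_{n\ge1}M_{n-1}t^n=t\cdot\frac{1-t-\sqrt{1-2t-3t^2}}{2t^2}$ by the classical Motzkin generating function, and then rewrite the double sum in \eqref{11-2lem2e1} as
$$\sum_{n\ge1}t^n\sum_{i=1}^{n-1}\sum_{j=1}^i m_i(j)p_{n-i-1}(j)=t\sum_{j\ge1}\Bigl(\sum_{i\ge j}m_i(j)t^i\Bigr)\Bigl(\sum_{k\ge0}p_k(j)t^k\Bigr).$$
The two inner series are coefficient extractions from the generating functions $M(t;v)$ and $P(t;v,u)$ defined just before Lemma~\ref{11-2lem3}: by the definitions, $\sum_{i\ge j}m_i(j)t^i=[v^{j-1}]M(t;v)$, while $\sum_{k\ge0}p_k(j)t^k=1+[v^{j-1}]P(t;v,1)$, the extra $1$ accounting for $p_0(j)=1$.

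The next step is to collapse $M(t;v)$. With $T=T(t)=\frac{1+t-\sqrt{1-2t-3t^2}}{2t}$ as in Lemma~\ref{11-2lem4}, the numerator of \eqref{11-2lem4e1} equals $2t(T-v)$, while its denominator, viewed as a quadratic in $v$, has $T$ as one root and product of roots $\frac{1+t}{t}$, so it equals $2t(v-T)\bigl(v-\frac{1+t}{tT}\bigr)$; cancelling the common factor $v-T$ yields the compact form $M(t;v)=\frac{tT}{1+t-tvT}$, whence $[v^{j-1}]M(t;v)=\bigl(\frac{tT}{1+t}\bigr)^{j}$ for all $j\ge1$. Substituting this, the double-sum generating function becomes
$$t\sum_{j\ge1}\Bigl(\tfrac{tT}{1+t}\Bigr)^{j}+\tfrac{t^{2}T}{1+t}\sum_{j\ge1}\Bigl(\tfrac{tT}{1+t}\Bigr)^{j-1}[v^{j-1}]P(t;v,1)=tM(t;1)+\tfrac{t^{2}T}{1+t}\,P\!\Bigl(t;\tfrac{tT}{1+t},1\Bigr),$$
where the first equality uses the geometric series and $M(t;1)=\frac{tT}{1+t-tT}=T-1$, and the second uses that summing $c^{\,j-1}$ against the coefficients $[v^{j-1}]P(t;v,1)$ recovers $P(t;c,1)$ — valid for $t$ near $0$, since then $c=tT/(1+t)$ is small and all the series in play converge.

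Finally I would assemble the pieces. Adding $\sum_{n\ge1}M_{n-1}t^n$ to $tM(t;1)=t(T-1)$ and simplifying via $tT=\frac{1+t-\sqrt{1-2t-3t^2}}{2}$ produces the first term $\frac{(1+t)(1-t-\sqrt{1-2t-3t^2})}{2t}$ of \eqref{11-2the1}. For the remaining term, I would substitute $v=tT/(1+t)$ into \eqref{11-2lem4e2}: this sends the argument of the $j$-th summand from $t^{j}T^{2j}v/(1+t)^{j}$ to $t^{j+1}T^{2j+1}/(1+t)^{j+1}$, and reindexing $j\mapsto j-1$ (with a matching shift of the inner product) rewrites the sum as $\sum_{j\ge1}b\bigl(\frac{t^{j}T^{2j-1}}{(1+t)^{j}}\bigr)\prod_{i=1}^{j-1}a\bigl(\frac{t^{i}T^{2i-1}}{(1+t)^{i}}\bigr)$, which is precisely the second term of \eqref{11-2the1}. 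The real work is bookkeeping rather than conceptual: the cancellation that collapses $M(t;v)$, the justification that the coefficientwise sum over $j$ equals $P(t;tT/(1+t),1)$, and the careful reindexing of the nested sum and product after the substitution. None of these steps is deep, but the last one is where sign and off-by-one errors are most likely to arise.
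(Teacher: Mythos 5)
Your proposal is correct and follows essentially the same route as the paper's proof: convert \eqref{11-2lem2e1} into generating functions, extract $[v^{j-1}]M(t;v)=\bigl(\tfrac{tT}{1+t}\bigr)^{j}$, recognize the resulting sum as a geometric series plus $P\bigl(t;\tfrac{tT}{1+t},1\bigr)$, and finish by substituting into \eqref{11-2lem4e2} and reindexing. The only cosmetic difference is that you factor \eqref{11-2lem4e1} to the compact form $M(t;v)=\tfrac{tT}{1+t-tvT}$, whereas the paper writes the equivalent expression $\tfrac{2t}{(1+t+\sqrt{1-2t-3t^2})\left(1-\frac{2tv}{1+t+\sqrt{1-2t-3t^2}}\right)}$; all your intermediate identities (including $M(t;1)=T-1$ and the handling of $p_0(j)=1$) check out.
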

\begin{proof}
Note first that \eqref{11-2lem4e1} may be written as
$$M(t;v)=\frac{2t}{(1+t+\sqrt{1-2t-3t^2})\left(1-\frac{2tv}{1+t+\sqrt{1-2t-3t^2}}\right)},$$
and hence
$$\sum_{n\geq j}m_n(j)t^n=[v^{j-1}]M(t;v)=\frac{(2t)^j}{(1+t+\sqrt{1-2t-3t^2})^j}, \qquad j \geq 1.$$
Thus, by \eqref{11-2lem2e1}, we have
\begin{align*}
\sum_{n\geq1}c_n(11\text{-}2)t^n-\sum_{n\geq1}M_{n-1}t^n&=\sum_{n\geq2}\sum_{i=1}^{n-1}\sum_{j=1}^im_i(j)p_{n-i-1}(j)t^n\\
&=t\sum_{j\geq1}\sum_{n\geq0}\left(\sum_{i\geq j}m_i(j)t^i\right)p_{n}(j)t^n\\
&=t\sum_{j\geq1}\sum_{n\geq0}\frac{(2t)^j}{(1+t+\sqrt{1-2t-3t^2})^j}p_{n}(j)t^n\\
&=\frac{2t^2}{1+t+\sqrt{1-2t-3t^2}}\sum_{n\geq0}\sum_{j\geq1}p_{n}(j)\frac{(2t)^{j-1}t^n}{(1+t+\sqrt{1-2t-3t^2})^{j-1}}\\
&=\frac{2t^2}{1+t+\sqrt{1-2t-3t^2}}\biggl(
\sum_{j\geq1}\left(\frac{2t}{1+t+\sqrt{1-2t-3t^2}}\right)^{j-1}\\
&\quad+P\left(t;\frac{2t}{1+t+\sqrt{1-2t-3t^2}},1\right)\biggr).
\end{align*}
Hence, we get
\begin{align*}
\sum_{n\geq1}c_n(11\text{-}2)t^n&=\frac{(1+t)(1-t-\sqrt{1-2t-3t^2})}{2t}\\
&\quad+\frac{2t^2}{1+t+\sqrt{1-2t-3t^2}}P\left(t;\frac{2t}{1+t+\sqrt{1-2t-3t^2}},1\right)\\
&=\frac{(1+t)(1-t-\sqrt{1-2t-3t^2})}{2t}+\frac{t^2T}{1+t}P\left(t;\frac{tT}{1+t},1\right),
\end{align*}
where we have made use of the fact $\sum_{n\geq0}M_nt^n=\frac{1-t-\sqrt{1-2t-3t^2}}{2t^2}$.  Thus, by \eqref{11-2lem4e2}, we obtain
\begin{align*}
\sum_{n\geq1}c_n(11\text{-}2)t^n
&=\frac{(1+t)(1-t-\sqrt{1-2t-3t^2})}{2t}
+\frac{t^2T}{1+t}\sum_{j\geq0}b\left(\frac{t^{j+1}T^{2j+1}}{(1+t)^{j+1}}\right)\prod_{i=0}^{j-1}a\left(\frac{t^{i+1}T^{2i+1}}{(1+t)^{i+1}}\right),
\end{align*}
which implies \eqref{11-2the1}.
\end{proof}

\subsection{The pattern 21-1}

To determine a formula for the generating function of the sequence $c_n(21\text{-}1)$ for $n \geq 1$, we refine it as follows by considering a certain pair of parameters on $\mathcal{C}_n(21\text{-}1)$.  Recall that if $\pi=\pi_1\cdots \pi_n$ is a sequence with $\pi_i=x$ and $\pi_{i+1}=y$ for some $i\in [n-1]$ where $x>y$, then $\pi$ has a \emph{descent} at index $i$ and the values $x$ and $y$ are referred to as the \emph{top} and \emph{bottom} of the descent, respectively.  Given $n \geq 3$ and $1 \leq a <m \leq n-1$, let $r_n(m,a)$ denote the number of members of $\mathcal{C}_n(21\text{-}1)$ that contain at least one descent and have greatest letter $m$ and smallest descent bottom $a$, with $r_n(m,a)$ taken to be zero otherwise.  For example, we have $r_6(4,2)=7$, the enumerated sequences belonging to $\{112342, 122342,123234,123342, 123423, 123432, 123442\}$.

Given $n \geq1$ and $1 \leq m \leq n$, let $r_n(m)$ be the number of members of $\mathcal{C}_n(21\text{-}1)$ having largest letter $m$.  By the definitions, we have $c_n(21\text{-}1)=\sum_{m=1}^nr_m(n)$ and
\begin{equation}\label{rn(m)form}
r_n(m)=\binom{n-1}{m-1}+\sum_{a=1}^{m-1}r_n(m,a), \qquad 1 \leq m \leq n-1,
\end{equation}
with $r_n(n)=1$ for all $n \geq1$.  Note that the  $\binom{n-1}{m-1}$ term in \eqref{rn(m)form} accounts for the weakly increasing members of $\mathcal{C}_n(21\text{-}1)$ whose largest letter is $m$.

The array $r_n(m,a)$ is given recursively as follows.

\begin{lemma}\label{21-1lem1}
If $n\geq4$, then
\begin{equation}\label{21-1lem1e1}
r_n(m,a)=\sum_{i=1}^{n-m}r_{n-i}(m-1,a-1), \qquad 2 \leq a <m \leq n-1,
\end{equation}
and
\begin{align}
r_n(m,1)&=r_{n-1}(m,1)+r_{n-2}(m-1)+\sum_{i=1}^{m-2}\sum_{j=i}^{n-m-1}\binom{j-1}{i-1}r_{n-j-2}(m-1)\notag\\
&\quad+\sum_{i=1}^{m-1}\sum_{j=m-1}^{n-i-2}\binom{j-1}{m-2}r_{n-j-2}(i)+\sum_{i=2}^{m-2}\sum_{j=m}^{n-3}\sum_{k=1}^\ell r_j(m-1,i)r_{n-j-2}(k), \quad 2 \leq m \leq n-1, \label{21-1lem1e2}
\end{align}
with initial values $r_1(1)=r_2(1)=r_2(2)=r_3(2,1)=1$, where $\ell=\min\{i-1,n-j-2\}$ and $r_n(m)$ is given by \eqref{rn(m)form}.
\end{lemma}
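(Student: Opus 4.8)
The goal is to establish the recurrences \eqref{21-1lem1e1} and \eqref{21-1lem1e2} for the array $r_n(m,a)$ by a careful combinatorial case analysis on the structure of a member $\pi=\pi_1\cdots\pi_n \in \mathcal{C}_n(21\text{-}1)$ having greatest letter $m$, smallest descent bottom $a$, and at least one descent. The fundamental structural observation I would isolate first is a normal form for $21\text{-}1$ avoiders: if $\pi$ has a descent with top $x$ and bottom $y$, then \emph{no letter equal to $y$ may occur anywhere to the right of that descent} (since $x\,y$ together with a later $y$ would form an occurrence of $21\text{-}1$). In particular, the smallest descent bottom $a$ occurs exactly once after its defining descent, and more usefully, once a value is "passed through on the way down'' it can never be revisited by a level. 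I would also record the complementary fact used implicitly in \eqref{rn(m)form}: $\pi$ is weakly increasing iff it has no descent, contributing the $\binom{n-1}{m-1}$ term.

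**The case $a\ge 2$.** For \eqref{21-1lem1e1}, I would argue that when the smallest descent bottom is $a\ge2$, the letter $1$ is never a descent bottom, hence (by the no-revisit observation applied at the \emph{first} descent) the value $1$ appears only in an initial block before any descent occurs, and in fact — because $\pi_1=1$ and the sequence is Catalan — $\pi$ begins with a run $1^i$ for some $i\ge1$, immediately followed by a $2$, after which $1$ never recurs. Deleting that entire initial run of $1$'s and subtracting $1$ from every remaining letter yields a member of $\mathcal{C}_{n-i}(21\text{-}1)$ with greatest letter $m-1$ and smallest descent bottom $a-1$ (one must check the smallest descent bottom really drops to $a-1$: the old descent with bottom $a$ becomes one with bottom $a-1$, and nothing smaller can appear as a new bottom since all old bottoms lay in $[a,m-1]$). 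Summing over the length $i\in[1,n-m]$ of the deleted initial $1$-run — the upper bound forced by the need for $m-1$ further distinct values $2,\dots,m$ in the truncated word of length $n-i$ — gives \eqref{21-1lem1e1}. This direction I expect to be routine once the normal form is in hand.

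**The case $a=1$: the main obstacle.** The real work is \eqref{21-1lem1e2}, where $1$ \emph{is} a descent bottom, so some descent lands on $1$ and thereafter $1$ occurs at most once. I would partition $\mathcal{C}_n(21\text{-}1)$ with these invariants according to the structure near the \emph{last} occurrence of the letter $1$ and the behaviour of the penultimate letter. The five summands should correspond to: (i) $\pi$ ends in a level $xx$ of its largest letter or otherwise admits removal of a final letter returning a shorter avoider with the same $(m,1)$ data, giving $r_{n-1}(m,1)$; (ii) $\pi$ ends exactly in the pattern determined by its final descent to $1$ being immediately terminal-ish, producing a factor $r_{n-2}(m-1)$ from a prefix of length $n-2$ after excising a terminal "$m\,1$''-type segment; (iii)–(iv) the two double sums indexed by $i,j$ with binomial coefficients $\binom{j-1}{i-1}$ and $\binom{j-1}{m-2}$, which arise from inserting a weakly increasing "fan'' of $j$ letters drawn from a $(i{-}1)$- or $(m{-}2)$-element set between a recursively-structured prefix (enumerated by $r_{n-j-2}(m-1)$, resp. $r_{n-j-2}(i)$) and a short fixed suffix, the binomials counting the weak compositions that realize such fans exactly as in the proof of Lemma~\ref{21-2lem1}; and (v) the triple sum $\sum_{i}\sum_{j}\sum_{k\le\ell}r_j(m-1,i)r_{n-j-2}(k)$, capturing words that split as a prefix of length $j$ (itself a $21\text{-}1$ avoider with greatest letter $m-1$ and smallest descent bottom $i$, hence containing a descent) followed by a two-letter bridge and then an independent tail (a $21\text{-}1$ avoider of length $n-j-2$ with last-letter/greatest-letter parameter $k$), with $\ell=\min\{i-1,n-j-2\}$ bounding $k$ because the tail must fit beneath the bridge value $i-1$ and cannot be longer than $n-j-2$.

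**Where the difficulty concentrates, and how I would manage it.** The hard part is ensuring this is a genuine \emph{partition} — every avoider counted once — and that each piece's constraints (the ranges of $i,j,k$, the choice of $r_{n-j-2}(m-1)$ versus $r_{n-j-2}(i)$, the distinction between "the largest letter does/does not recur after the last $1$'') are forced rather than imposed. I would pin the partition to a single well-defined statistic, e.g. the position $p$ of the last $1$ together with whether a letter equal to $\max(\pi)$ occurs before vs.\ after $p$ and whether $\pi$ ends in a level, and then verify case by case that the decomposition is reversible: given the shorter avoider(s) and the combinatorial data (the weak composition for a fan, or the bridge value, or the tail), the original $\pi$ is uniquely reconstituted, and conversely the reconstituted word indeed avoids $21\text{-}1$ and has the stated $(m,1)$ parameters. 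The binomial factors I would justify exactly as the authors do elsewhere — weakly increasing words on a fixed alphabet hitting a prescribed set of values are in bijection with weak compositions, so a run of length $j$ using $i$ prescribed values among the relevant ones is counted by $\binom{j-1}{i-1}$. The initial conditions $r_1(1)=r_2(1)=r_2(2)=r_3(2,1)=1$ are immediate from the definitions (e.g.\ the only length-$6$... rather, the only small avoiders with a descent), and I would simply read them off, noting $r_3(2,1)=1$ is witnessed by $121$.
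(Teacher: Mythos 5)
Your treatment of \eqref{21-1lem1e1} and of the initial conditions is correct and matches the paper: for $a\geq2$ the letter $1$ is confined to the initial run, and stripping $1^i$ and subtracting $1$ from the remainder gives the sum over $i\in[1,n-m]$. The problem is \eqref{21-1lem1e2}, where your proposal is a sketch organized around the wrong statistic and, where it does commit to concrete attributions of the five summands, gets several of them wrong. The key structural fact that drives the paper's proof --- and that your plan never isolates --- is this: if $\pi\in\mathcal{C}_n(21\text{-}1)$ has $1$ as a descent bottom and starts with a \emph{single} $1$, then $\pi$ contains exactly two (non-consecutive) $1$'s, so it factors canonically as $\pi=1\alpha1\beta$ with $\alpha$ nonempty and $1$-free. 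The five terms are then exactly the cases: (1) $\pi$ starts with two or more $1$'s (delete the second $1$), giving $r_{n-1}(m,1)$; (2) $\beta$ empty, giving $r_{n-2}(m-1)$; (3) $\alpha$ weakly increasing and not containing $m$; (4) $\alpha$ weakly increasing and containing $m$; (5) $\alpha$ containing a descent, in which case $\alpha$ is forced to contain $m$ and the largest letter of $\beta$ is capped by the smallest descent bottom of $\alpha$. These cases are visibly exhaustive and disjoint, which is precisely the ``genuine partition'' issue you flag as the hard part and then leave unresolved.

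Concretely: your guess that the term $r_{n-1}(m,1)$ arises from removing a \emph{final} letter is wrong --- removing the last letter can destroy the property that $1$ is a descent bottom --- whereas it actually arises from collapsing an initial run of two or more $1$'s. Your guess that $r_{n-2}(m-1)$ comes from excising a terminal ``$m\,1$''-type segment is also wrong; it comes from $\pi=1\alpha1$ with both $1$'s removed and $\alpha$ shifted down, and the last letter of $\alpha$ need not be $m$. In the two double sums you place the weakly increasing ``fan'' after a recursively structured prefix and before a short suffix, whereas the fan is $\alpha$, sitting between the two $1$'s, and the recursive factor is the suffix $\beta$ (shifted down by $1$); getting this orientation right is what forces the index ranges $j\in[i,n-m-1]$ versus $j\in[m-1,n-i-2]$ and the distinction between $r_{n-j-2}(m-1)$ and $r_{n-j-2}(i)$. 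Your reading of the bound $\ell=\min\{i-1,n-j-2\}$ in the triple sum is essentially right, but without the $1\alpha1\beta$ normal form you cannot justify why $\alpha$ must contain $m$ in that case, nor why $i\geq2$ there (if $\alpha$ had a descent bottom equal to $2$, then $\beta$ would have to be empty). As written, the proposal does not constitute a proof of \eqref{21-1lem1e2}.
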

\begin{proof}
The initial values for $1 \leq n \leq 3$ are easily seen to hold, so we may assume $n \geq4$.  Let $\mathcal{R}_{n,m,a}$ and $\mathcal{R}_{n,m}$ denote the subsets of $\mathcal{C}_n(21\text{-}1)$ enumerated by $r_n(m,a)$ and $r_n(m)$, respectively.  To realize \eqref{21-1lem1e1}, first note that a member of $\mathcal{R}_{n,m,a}$ where $a>1$ cannot contain any $1$'s outside of the initial run, for otherwise its smallest descent bottom would be $1$ and not $a$.  Thus, members of $\mathcal{R}_{n,m,a}$ where $a>1$ must be of the form $\pi=1^i\pi'$, where $i \geq 1$ and $\pi'$ is such that $\pi'-1$ (the sequence obtained by subtracting 1 from each entry of $\pi'$) belongs to $\mathcal{R}_{n-i,m-1,a-1}$.  Considering all possible $i$ then gives \eqref{21-1lem1e1}.

Now suppose $\pi \in \mathcal{R}_{n,m,1}$, where $n \geq 4$ and $2 \leq m \leq n-1$.  If $\pi$ starts with two or more 1's, then there are clearly $r_{n-1}(m,1)$ possibilities, upon deleting the second $1$, so assume $\pi$ starts with a single 1.  Then $\pi$ avoiding 21-1 and having 1 as a descent bottom implies $\pi$ must contain exactly two (non-consecutive) 1's in this case.  If $\pi$ is of the form $\pi=1\alpha1$, where $\alpha$ contains no 1's, then there are $r_{n-2}(m-1)$ such $\pi$.  So assume $\pi=1\alpha1\beta$, where $\alpha$ and $\beta$ are nonempty. First suppose $\alpha$ is weakly increasing.  If $\alpha$ has greatest letter $i+1$ for some $1 \leq i \leq m-2$ with $|\alpha|=j$, then there are $\binom{j-1}{i-1}$ possibilities for $\alpha$.  Then $\beta$ is such that $\beta-1 \in \mathcal{R}_{n-j-2,m-1}$ , where $i \leq j \leq n-m-1$, and hence there are $r_{n-j-2}(m-1)$ possible $\beta$.  Summing over all $i$ and $j$ then gives $\sum_{i=1}^{m-2}\sum_{j=i}^{n-m-1}\binom{j-1}{i-1}r_{n-j-2}(m-1)$ such $\pi$ wherein $\alpha$ does not contain $m$.  On the other hand, if $\alpha$ contains $m$, then there are $\binom{j-1}{m-2}$ possibilities for $\alpha$ where $|\alpha|=j$.  This implies $\beta-1 \in \mathcal{R}_{n-j-2,i}$ for some $i \in [m-1]$.  Summing over all $i$ and $j$ then gives $\sum_{i=1}^{m-1}\sum_{j=m-1}^{n-i-2}\binom{j-1}{m-2}r_{n-j-2}(i)$ possible $\pi=1\alpha1\beta$ wherein $\alpha$ is weakly increasing and contains $m$.

Finally, assume $\alpha$ within $\pi=1\alpha1\beta$ contains at least one descent.  Suppose that $i+1$ is the smallest descent bottom of $\pi$ lying within $\alpha$.  Note that $i \geq 2$, and hence $m\geq 4$ in this case with $i \in [2,m-2]$, for otherwise, if $i=1$ (i.e., $\alpha$ contains a descent bottom of size 2), then $\beta$ would have to be empty, contrary to assumption.  Then $\beta$ must have largest letter $k+1$ for some $1 \leq k \leq i-1$, and hence $\beta-1 \in \mathcal{R}_{n-j-2,k}$, where $j=|\alpha|$, for if not, then $\pi$ would contain an occurrence of 21-1 in which the role of `1' is played by $i+1$ and the role of `2' is played by the descent top of $i+1$ within $\alpha$. In particular, it is seen that if $\alpha$ contains a descent, then $\beta$ cannot contain $m$.  Thus, $\alpha$ must contain $m$, i.e., $\alpha-1\in\mathcal{R}_{j,m-1,i}$.  Conversely, $\pi=1\alpha1\beta$ with the stated restrictions on $\alpha$ and $\beta$ is seen to avoid 21-1.  Note that $j\geq m$ in order for $\alpha$ to contain both $m$ and a descent and $j \leq n-3$ in order for $\beta$ to be nonempty.  Summing over all $i$, $j$ and $k$ then gives $\sum_{i=2}^{m-2}\sum_{j=m}^{n-3}\sum_{k=1}^\ell r_j(m-1,i)r_{n-j-2}(k)$ possible $\pi$ in this case.  Combining with the prior cases then yields \eqref{21-1lem1e2} and completes the proof.
\end{proof}

Define the generating functions $R(t;v)=\sum_{n\geq1}\sum_{m=1}^nr_n(m)t^nv^{m-1}$ and $$R(t;v,u)=\sum_{n\geq3}\sum_{m=2}^{n-1}\sum_{a=1}^{m-1}
r_n(m,a)t^nv^{m-1}u^{a-1}.$$ In order to find an explicit formula for $R(t;v)$, we will need the following relation.

\begin{lemma}\label{idr2r3}
For all $k\geq1$,
\begin{equation}\label{idr2r3e1}
\sum_{n\geq3}\sum_{m=2}^{n-1}\sum_{a=1}^{m-1}
r_{n+k}(m+k,a+k)t^nv^{m-1}=\frac{R(t;v,1)}{(1-t)^k}.
\end{equation}
\end{lemma}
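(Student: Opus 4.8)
The plan is to prove \eqref{idr2r3e1} by induction on $k\geq0$, the sole ingredient being the recurrence \eqref{21-1lem1e1}. Write $S_k(t;v)$ for the left-hand side of \eqref{idr2r3e1}. For $k=0$ we have $S_0(t;v)=\sum_{n\geq3}\sum_{m=2}^{n-1}\sum_{a=1}^{m-1}r_n(m,a)t^nv^{m-1}$, which is exactly $R(t;v,1)$ by the definition of $R(t;v,u)$; this is the base case.

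For the inductive step, fix $k\geq1$ and assume $S_{k-1}(t;v)=R(t;v,1)/(1-t)^{k-1}$. For every triple $(n,m,a)$ appearing in $S_k$ one has $n+k\geq4$, $a+k\geq2$, $a+k<m+k$ (since $a\leq m-1$) and $m+k\leq(n+k)-1$ (since $m\leq n-1$), so the hypotheses of \eqref{21-1lem1e1} are met and we may write
$$r_{n+k}(m+k,a+k)=\sum_{i=1}^{n-m}r_{n+k-i}(m+k-1,a+k-1).$$
Rewriting $n+k-i=(n-i+1)+(k-1)$, $m+k-1=m+(k-1)$, $a+k-1=a+(k-1)$, and substituting $N=n-i+1$ (so $i\in[1,n-m]$ corresponds to $N\in[m+1,n]$), we obtain
$$S_k(t;v)=\sum_{n\geq3}\sum_{m=2}^{n-1}\sum_{a=1}^{m-1}t^nv^{m-1}\sum_{N=m+1}^{n}r_{N+(k-1)}\bigl(m+(k-1),a+(k-1)\bigr).$$

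Next, interchange the order of summation: for fixed $(N,m,a)$ with $m\geq2$, $1\leq a\leq m-1$ and $N\geq m+1$, the index $n$ runs over all $n\geq N$ (the remaining constraints $n\geq3$ and $m\leq n-1$ being automatic). Summing the geometric series $\sum_{n\geq N}t^n=t^N/(1-t)$ gives
$$S_k(t;v)=\frac{1}{1-t}\sum_{m\geq2}\sum_{a=1}^{m-1}v^{m-1}\sum_{N\geq m+1}r_{N+(k-1)}\bigl(m+(k-1),a+(k-1)\bigr)t^N,$$
and the triple sum on the right is precisely $S_{k-1}(t;v)$ after relabelling $N$ as $n$. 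Hence $S_k(t;v)=S_{k-1}(t;v)/(1-t)$, and \eqref{idr2r3e1} follows by induction.

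The main point requiring care — though it is not deep — is the index bookkeeping: one must check that after the substitution $N=n-i+1$ and the interchange of summations, the index set for $(N,m,a)$ coincides exactly with the one defining $S_{k-1}(t;v)$, and that \eqref{21-1lem1e1} genuinely applies to every term occurring (this is why we need $k\geq1$, which forces $a+k\geq2$). The interchange of summations itself is legitimate since all the series involved have nonnegative coefficients, equivalently converge absolutely for $|t|$ sufficiently small.
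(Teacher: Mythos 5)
Your proof is correct, and it takes a genuinely different (and somewhat more economical) route than the paper's. The paper proves \eqref{idr2r3e1} by equating coefficients of $t^nv^{m-1}$ and reducing to the finite-difference identity \eqref{idr2r3e3}, $r_n(m,a)=\sum_{\ell=0}^k(-1)^{k-\ell}\binom{k}{\ell}r_{n+\ell}(m+k,a+k)$; the $k=1$ case \eqref{idr2r3e4} is then established by a fresh combinatorial argument (classifying members of $\mathcal{R}_{n+1,m+1,a+1}$ by whether the initial run of $1$'s has length one or more), and general $k$ follows by induction using the Pascal recurrence. You instead induct on $k$ directly at the level of generating functions, feeding the already-proven recurrence \eqref{21-1lem1e1} into the sum and telescoping a geometric series to get $S_k=S_{k-1}/(1-t)$. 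The two arguments rest on the same combinatorial fact --- stripping the initial run of $1$'s --- since \eqref{idr2r3e4} is precisely the telescoped form of \eqref{21-1lem1e1}; but your version reuses Lemma \ref{21-1lem1} rather than proving a new identity, avoids the binomial inversion entirely, and your index bookkeeping (the substitution $N=n-i+1$ and the verification that the resulting index set is exactly that of $S_{k-1}$) is complete and correct. The only cosmetic remark is that your induction anchors at $k=0$, where $S_0=R(t;v,1)$ holds by definition, which is a perfectly legitimate base case even though the lemma is stated for $k\geq1$.
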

\begin{proof}
Equating coefficients of $t^nv^{m-1}$ in
$$\sum_{n\geq3}\sum_{m=2}^{n-1}\sum_{a=1}^{m-1}r_n(m,a)t^nv^{m-1}=(1-t)^k\sum_{n\geq3}\sum_{m=2}^{n-1}\sum_{a=1}^{m-1}
r_{n+k}(m+k,a+k)t^nv^{m-1},$$
we must show, for each $n \geq 3$ and $2 \leq m \leq n-1$, the identity
\begin{equation}\label{idr2r3e2}
\sum_{a=1}^{m-1}r_n(m,a)=\sum_{a=1}^{m-1}\sum_{\ell=0}^k(-1)^{k-\ell}\binom{k}{\ell}r_{n+\ell}(m+k,a+k), \qquad k \geq 1.
\end{equation}
To establish \eqref{idr2r3e2}, it suffices to show
\begin{equation}\label{idr2r3e3}
r_n(m,a)=\sum_{\ell=0}^k(-1)^{k-\ell}\binom{k}{\ell}r_{n+\ell}(m+k,a+k), \qquad k \geq1.
\end{equation}
To do so, first note that the $k=1$ case of \eqref{idr2r3e3} is given by
\begin{equation}\label{idr2r3e4}
r_n(m,a)=r_{n+1}(m+1,a+1)-r_n(m+1,a+1), \quad 1 \leq a \leq m-1.
\end{equation}

For \eqref{idr2r3e4}, a combinatorial explanation can be given by considering whether a member of the set $\mathcal{R}_{n+1,m+1,a+1}$, where $n \geq m+1$, starts with a single 1 or two or more $1$'s.  Note that there are $r_n(m,a)$ possibilities in the former case and $r_n(m+1,a+1)$  in the latter, as a member of $\mathcal{R}_{n+1,m+1,a+1}$ cannot contain any $1$'s outside of its initial run of $1$'s. Further, we have that \eqref{idr2r3e4} holds trivially for $1 \leq n \leq m$, and hence \eqref{idr2r3e4} holds for all $n \geq 1$ for each fixed $m$ and $a$.  This establishes the $k=1$ case of \eqref{idr2r3e3} for all $n\geq1$.  An induction on $k$ may now be given for \eqref{idr2r3e3} by replacing each $r_{n+\ell}(m+k,a+k)$ term in the sum on the right side with
$$r_{n+\ell+1}(m+k+1,a+k+1)-r_{n+\ell}(m+k+1,a+k+1)$$
and gathering like terms.  Making use of the binomial recurrence $\binom{k+1}{\ell}=\binom{k}{\ell}+\binom{k}{\ell-1}$, the result of this substitution is
$$\sum_{\ell=0}^{k+1}(-1)^{k+1-\ell}\binom{k+1}{\ell}r_{n+\ell}(m+k+1,a+k+1).$$
This establishes the $k+1$ case of \eqref{idr2r3e3}, which completes the induction and proof.
\end{proof}

 Rewriting relation \eqref{rn(m)form} in terms of generating functions, we have
\begin{align*}
R(t;v)&=\sum_{n\geq1}r_n(n)t^nv^{n-1}+\sum_{n\geq2}\sum_{m=1}^{n-1}r_n(m)t^nv^{m-1}\notag\\
&=\frac{t}{1-vt}+\sum_{n\geq2}\sum_{m=1}^{n-1}\binom{n-1}{m-1}t^nv^{m-1}+\sum_{n\geq3}\sum_{m=2}^{n-1}\sum_{a=1}^{m-1}r_n(m,a)t^nv^{m-1}\notag\\
&=\frac{t}{1-vt}+\frac{t^2}{(1-vt)(1-(1+v)t)}+R(t;v,1),
\end{align*}
which implies
\begin{align}
R(t;v)&=\frac{t}{1-(1+v)t}+R(t;v,1).\label{eq211a1}
\end{align}

We now obtain the following functional equation for $R(t;v,u)$.

\begin{lemma}\label{21-1lem3}
We have
\begin{align}\label{eq211a3}
\left(1-\frac{vut}{1-t}\right)R(t;v,u)
&=\frac{vt^3(1-t)}{(1-(1+v)t)((1-t)^2-vt^2)}+\frac{v^2t^3}{1-t}R(t;v,1)R\left(t;\frac{vt}{1-t},1\right)\notag\\
&\quad-\frac{vt^2(vt^3-(1-t)^3)}{(1-2t)((1-t)^2-vt^2)}R(t;v,1)\notag\\
&\quad-\frac{v(1-v)t^4}{(1-2t)(1-(1+v)t)}R\left(t;\frac{vt}{1-t},1\right)+tR(t;v,0).
\end{align}
\end{lemma}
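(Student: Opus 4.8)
The plan is to translate recurrences \eqref{21-1lem1e1} and \eqref{21-1lem1e2} into generating-function identities, using \eqref{eq211a1} to pass between $R(t;v)$ and $R(t;v,1)$. The starting point is that $R(t;v,u)$ splits according to whether the smallest descent bottom equals $1$ or is at least $2$. Multiplying \eqref{21-1lem1e1} by $t^nv^{m-1}u^{a-1}$ and summing over $n\ge4$, $3\le m\le n-1$, $2\le a\le m-1$, one notices that the summands with $i>n-m$ vanish, since $r_{n-i}(m-1,a-1)=0$ whenever $n-i<m$; hence $\sum_{i=1}^{n-m}$ may be replaced by $\sum_{i\ge1}$ and, after reindexing by $(n-i,m-1,a-1)$, the contribution of all $a\ge2$ terms to $R(t;v,u)$ is exactly $\frac{vut}{1-t}R(t;v,u)$. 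Moving it to the left produces the factor $1-\frac{vut}{1-t}$, and the remaining part of $R(t;v,u)$ (the $a=1$ terms) is, by definition, $R(t;v,0)$. So it suffices to show that $R(t;v,0)=\sum_{n\ge3}\sum_{m}r_n(m,1)t^nv^{m-1}$ --- whose $n=3$ term is $t^3v$ and whose $n\ge4$ part is governed by \eqref{21-1lem1e2} --- equals the right-hand side of \eqref{eq211a3}. Setting $u=1$ in the relation just derived also gives $R(t;v,0)=\frac{1-(1+v)t}{1-t}R(t;v,1)$, which I shall use below.

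Next I would translate the five summands on the right of \eqref{21-1lem1e2} in turn. The term $r_{n-1}(m,1)$ contributes $tR(t;v,0)$; the term $r_{n-2}(m-1)$ contributes $t^2vR(t;v)-t^3v$, which \eqref{eq211a1} rewrites as $\frac{vt^3}{1-(1+v)t}+vt^2R(t;v,1)-vt^3$. For the third summand, $\sum_{j\ge i}\binom{j-1}{i-1}t^j=\frac{t^i}{(1-t)^i}$ and $\sum_{i=1}^{m-2}\bigl(\tfrac{t}{1-t}\bigr)^i=\frac{t}{1-2t}\bigl(1-(\tfrac{t}{1-t})^{m-2}\bigr)$ reduce it, after the coefficient manipulations $\sum_m v^{m-1}[v^{m-2}]R(t;v)=vR(t;v)$ and $\sum_m v^{m-1}(\tfrac{t}{1-t})^{m-2}[v^{m-2}]R(t;v)=vR(t;\tfrac{vt}{1-t})$, to $\frac{vt^3}{1-2t}\bigl(R(t;v)-R(t;\tfrac{vt}{1-t})\bigr)$. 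For the fourth, $\sum_{j\ge m-1}\binom{j-1}{m-2}t^j=\frac{t^{m-1}}{(1-t)^{m-1}}$ turns it into $t^2\sum_m(\tfrac{vt}{1-t})^{m-1}\sum_{i=1}^{m-1}[v^{i-1}]R(t;v)$, and swapping the order of summation collapses this to $\frac{vt^3}{1-t-vt}R(t;\tfrac{vt}{1-t})$. In both cases \eqref{eq211a1} is then used once more to replace $R(t;v)$, $R(t;\tfrac{vt}{1-t})$ by $R(t;v,1)$, $R(t;\tfrac{vt}{1-t},1)$ plus explicit rational functions.

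The main obstacle is the fifth summand, $\sum_{i=2}^{m-2}\sum_{j=m}^{n-3}\sum_{k=1}^{\ell}r_j(m-1,i)r_{n-j-2}(k)$ with $\ell=\min\{i-1,n-j-2\}$: being a convolution in $n$, it produces the product term of \eqref{eq211a3}. The substitution $n'=n-j-2$ decouples the two length parameters; since $r_{n'}(k)=0$ for $k>n'$ the inner sum is simply $\sum_{k=1}^{i-1}r_{n'}(k)$, whose generating function in $n'$ is $\sum_{k=1}^{i-1}[v^{k-1}]R(t;v)$. For the other factor I would use that, by iterating the ``$\pi=1^c\pi'$'' structure underlying \eqref{21-1lem1e1} (equivalently, by the diagonal-shift relation of Lemma \ref{idr2r3}), every member of $\mathcal{R}_{j,m-1,i}$ is a weakly increasing prefix made of $i-1$ runs of lengths $\ge1$ followed by a member of the class with largest letter $m-i$ and smallest descent bottom $1$, shifted up by $i-1$; hence $\sum_j r_j(m-1,i)t^j=\bigl(\tfrac{t}{1-t}\bigr)^{i-1}[v^{m-i-1}]R(t;v,0)$. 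Summing over $m$ (a telescoping $v$-sum, using $[v^0]R(t;v,0)=0$) and then over $i$ turns the fifth summand into $\frac{v^2t^3}{1-t-vt}R(t;v,0)R(t;\tfrac{vt}{1-t})$; applying $R(t;v,0)=\frac{1-(1+v)t}{1-t}R(t;v,1)$ and \eqref{eq211a1} converts it to $\frac{v^2t^3}{1-t}R(t;v,1)R(t;\tfrac{vt}{1-t},1)+\frac{v^2t^4}{(1-t)^2-vt^2}R(t;v,1)$.

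Finally I would collect all contributions. The coefficients of $R(t;v,1)$ add up to $-\frac{vt^2(vt^3-(1-t)^3)}{(1-2t)((1-t)^2-vt^2)}$ (via $(1-t)\bigl((1-t)^2-vt^2\bigr)+vt^2(1-2t)=(1-t)^3-vt^3$), those of $R(t;\tfrac{vt}{1-t},1)$ to $-\frac{v(1-v)t^4}{(1-2t)(1-(1+v)t)}$, and the purely rational terms to $\frac{vt^3(1-t)}{(1-(1+v)t)((1-t)^2-vt^2)}$ --- the decisive cancellation being $C-tA+tB=B$ with $A=1-(1+v)t$, $B=1-2t$, $C=(1-t)^2-vt^2$ --- while the term $tR(t;v,0)$ from the first summand survives unchanged. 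Together with the factor $1-\frac{vut}{1-t}$ on the left, this is exactly \eqref{eq211a3}. The genuinely delicate points are the bookkeeping in the fifth summand (dropping the $\min$ in $\ell$, and checking that the triple sum factors after the substitution $n'=n-j-2$) and the final simplification of the partial fractions into the stated compact form.
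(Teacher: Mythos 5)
Your proposal is correct and follows essentially the same route as the paper: the $a\ge2$ part of \eqref{21-1lem1e1} produces the kernel factor $1-\frac{vut}{1-t}$, the five summands of \eqref{21-1lem1e2} are translated term by term (with the convolution term handled via the diagonal shift of Lemma~\ref{idr2r3}, which your iterated form $\sum_j r_j(m-1,i)t^j=(\tfrac{t}{1-t})^{i-1}[v^{m-i-1}]R(t;v,0)$ reproduces), and \eqref{eq211a1} converts $R(t;v)$ into $R(t;v,1)$ before the final partial-fraction collection. The only cosmetic difference is that you route the product term through $R(t;v,0)=\frac{1-(1+v)t}{1-t}R(t;v,1)$ rather than invoking \eqref{idr2r3e1} directly, and all of your intermediate expressions and cancellations check out against the paper's.
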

\begin{proof}
Define $\ell_{n,i,j}=\min\{i-1,n-j-2\}$.  By \eqref{21-1lem1e1} and \eqref{21-1lem1e2}, we have
\begin{align*}
&R(t;v,u)\\
&=\sum_{n\geq4}\sum_{m=3}^{n-1}\sum_{a=2}^{m-1}r_n(m,a)t^nv^{m-1}u^{a-1}
+\sum_{n\geq3}\sum_{m=2}^{n-1}r_n(m,1)t^nv^{m-1}\\
&=vu\sum_{m\geq2}\sum_{i\geq1}\sum_{n\geq m+i+1}\sum_{a=1}^{m-1}r_{n-i}(m,a)t^nv^{m-1}u^{a-1}
+\sum_{n\geq3}\sum_{m=2}^{n-1}r_n(m,1)t^nv^{m-1}\\
&=vu\sum_{m\geq2}\sum_{i\geq1}\sum_{n\geq m+1}\sum_{a=1}^{m-1}r_{n}(m,a)t^{n+i}v^{m-1}u^{a-1}
+\sum_{n\geq4}\sum_{m=2}^{n-2}r_{n-1}(m,1)t^nv^{m-1}\\
&\quad+\sum_{n\geq3}\sum_{m=2}^{n-1}r_{n-2}(m-1)t^nv^{m-1}
+\sum_{n\geq5}\sum_{m=3}^{n-2}\sum_{i=1}^{m-2}\sum_{j=i}^{n-m-1}\binom{j-1}{i-1}r_{n-j-2}(m-1)t^nv^{m-1}\\
&\quad+\sum_{n\geq4}\sum_{m=2}^{n-2}\sum_{i=1}^{m-1}\sum_{j=m-1}^{n-i-2}\binom{j-1}{m-2}r_{n-j-2}(i)t^nv^{m-1}\\
&\quad+\sum_{n\geq7}\sum_{m=4}^{n-3}\sum_{i=2}^{m-2}\sum_{j=m}^{n-3}\sum_{k=1}^{\ell_{n,i,j}}r_j(m-1,i)r_{n-j-2}(k)t^nv^{m-1}\\
&=\frac{vut}{1-t}\sum_{m\geq2}\sum_{n\geq m+1}\sum_{a=1}^{m-1}r_{n}(m,a)t^nv^{m-1}u^{a-1}
+tR(t;v,0)+vt^2R(t;v)\\
&\quad+v\sum_{i\geq1}\sum_{m\geq i+1}\sum_{j\geq i}\sum_{n\geq m}\binom{j-1}{i-1}r_n(m)t^{n+j+2}v^{m-1}
\\
&\quad+\sum_{i\geq1}\sum_{m\geq i+1}\sum_{j\geq m-1}\sum_{n\geq i+j+2}\binom{j-1}{m-2}r_{n-j-2}(i)t^nv^{m-1}\\
&\quad+\sum_{n\geq1}\sum_{i\geq2}\sum_{m\geq i+2}\sum_{j\geq m}\sum_{k=1}^{\min\{i-1,n\}}r_j(m-1,i)r_n(k)t^{n+j+2}v^{m-1}\\
&=\frac{vut}{1-t}\sum_{n\geq3}\sum_{m=2}^{n-1}\sum_{a=1}^{m-1}r_{n}(m,a)t^nv^{m-1}u^{a-1}
+tR(t;v,0)+vt^2R(t;v)\\
&\quad+v\sum_{i\geq1}\sum_{m\geq i+1}\sum_{n\geq m}r_n(m)\frac{t^{n+i+2}v^{m-1}}{(1-t)^i}
+\sum_{i\geq1}\sum_{m\geq i+1}\sum_{j\geq m-1}\sum_{n\geq i}\binom{j-1}{m-2}r_n(i)t^{n+j+2}v^{m-1}\\
&\quad+v\sum_{n\geq1}\sum_{k=1}^n\sum_{i\geq k+1}\sum_{m\geq i+1}\sum_{j\geq m+1}r_j(m,i)r_n(k)t^{n+j+2}v^{m-1}\\
&=\frac{vut}{1-t}R(t;v,u)
+tR(t;v,0)+vt^2R(t;v)+\frac{v}{1-2t}\sum_{n\geq 1}\sum_{m=1}^nr_n(m)\left(t^{n+3}-\frac{t^{n+m+2}}{(1-t)^{m-1}}\right)v^{m-1}\\
&\quad+\sum_{n\geq1}\sum_{i=1}^n\sum_{m\geq i+1}\sum_{j\geq m-1}\binom{j-1}{m-2}r_n(i)t^{n+j+2}v^{m-1}\\
&\quad+v\sum_{n\geq1}\sum_{k=1}^n\sum_{j\geq3}\sum_{m=2}^{j-1}\sum_{i=1}^{m-1}r_{j+k}(m+k,i+k)r_n(k)t^{n+j+k+2}v^{m+k-1}\\
&=\frac{vut}{1-t}R(t;v,u)
+tR(t;v,0)+vt^2R(t;v)+\frac{vt^3}{1-2t}\left(R(t;v)-R\left(t;\frac{vt}{1-t}\right)\right)\\
&\quad+\sum_{n\geq1}\sum_{i=1}^n\sum_{m\geq i+1}r_n(i)\frac{t^{n+m+1}v^{m-1}}{(1-t)^{m-1}}\\
&\quad+vt^2\sum_{n\geq1}\sum_{k=1}^nr_n(k)t^{n+k}v^k\left(\sum_{j\geq3}\sum_{m=2}^{j-1}\sum_{i=1}^{m-1}r_{j+k}(m+k,i+k)t^jv^{m-1}\right)\\
&=\frac{vut}{1-t}R(t;v,u)
+tR(t;v,0)+vt^2R(t;v)+\frac{vt^3}{1-2t}\left(R(t;v)-R\left(t;\frac{vt}{1-t}\right)\right)\\
&\quad+\frac{1}{1-(1+v)t}\sum_{n\geq1}\sum_{i=1}^n
r_n(i)\frac{t^{n+i+2}v^i}{(1-t)^{i-1}}\\
&\quad+vt^2\sum_{n\geq1}\sum_{k=1}^nr_n(k)t^{n+k}v^k\left(\sum_{j\geq3}\sum_{m=2}^{j-1}\sum_{i=1}^{m-1}r_{j+k}(m+k,i+k)t^jv^{m-1}\right)\\
&=\frac{vut}{1-t}R(t;v,u)
+tR(t;v,0)+vt^2R(t;v)+\frac{vt^3}{1-2t}\left(R(t;v)-R\left(t;\frac{vt}{1-t}\right)\right)\\
&\quad+\frac{vt^3}{1-(1+v)t}R\left(t;\frac{vt}{1-t}\right)+vt^2\sum_{n\geq1}\sum_{k=1}^nr_n(k)t^{n+k}v^k\left(\sum_{j\geq3}\sum_{m=2}^{j-1}\sum_{i=1}^{m-1}r_{j+k}(m+k,i+k)t^jv^{m-1}\right).
\end{align*}
Thus, by \eqref{idr2r3e1}, we get
\begin{align}\label{eq211a2}
R(t;v,u)
&=\frac{vut}{1-t}R(t;v,u)
+tR(t;v,0)+vt^2R(t;v)+\frac{vt^3}{1-2t}\left(R(t;v)-R\left(t;\frac{vt}{1-t}\right)\right)\notag\\
&\quad+\frac{vt^3}{1-(1+v)t}R\left(t;\frac{vt}{1-t}\right)+\frac{v^2t^3}{1-t}R(t;v,1)R\left(t;\frac{vt}{1-t}\right).
\end{align}
Applying now \eqref{eq211a1} to \eqref{eq211a2} yields \eqref{eq211a3}, after some algebra.
\end{proof}

We have that $R(t;v)$ may be expressed in terms of an infinite continued fraction as follows.

\begin{theorem}\label{21-1th}
The generating function $R(t;v)$ for the number of members of $\mathcal{C}_n(21\text{-}1)$ having largest letter $m$ for all $n \geq m \geq1$ (marked by $t^nv^{m-1}$) satisfies
\begin{align}
R(t;v)&=\dfrac{t}{
v(1-2t)-\dfrac{(1-t)^2(vt^2-(1-v)(1-2t)}{(1-t)(1-2t)-vt^3R(t;\frac{vt}{1-t})}}.\label{21-1the1}
\end{align}
In particular,
we have $\sum_{n\geq1}c_n(21\text{-}1)t^n=R(t;1)=t+2t^2+5t^3+13t^4+35t^5+96t^6+267t^7+750t^8+2122t^9+6036t^{10}
+17239t^{11}+49389t^{12}+141842t^{13}+408142t^{14}+1176194t^{15}
+3393726t^{16}+9801731t^{17}+28331971t^{18}+81947397t^{19}+237152119t^{20}+\cdots$.
\end{theorem}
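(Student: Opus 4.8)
The plan is to derive \eqref{21-1the1} from the functional equation \eqref{eq211a3} of Lemma~\ref{21-1lem3} by a two-stage elimination, and then read off the expansion of $R(t;1)$ by iteration. First I would remove the auxiliary series $R(t;v,0)$: setting $u=0$ in \eqref{eq211a3} makes the left-hand side equal to $R(t;v,0)$, and moving the term $tR(t;v,0)$ on the right to the left gives $(1-t)R(t;v,0)=A(t;v)$, where $A(t;v)$ is the right-hand side of \eqref{eq211a3} with the term $tR(t;v,0)$ deleted. Hence $R(t;v,0)=A(t;v)/(1-t)$, and since the right-hand side of \eqref{eq211a3} does not otherwise depend on $u$, substituting this value back yields
\[
\left(1-\frac{vut}{1-t}\right)R(t;v,u)=\frac{A(t;v)}{1-t}\qquad\text{for every }u.
\]

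Next I would specialize this identity to $u=1$, which gives $\bigl(1-(1+v)t\bigr)R(t;v,1)=A(t;v)$. The only occurrences of $R$ inside $A$ are $R(t;v,1)$, appearing linearly, and $R\bigl(t;\tfrac{vt}{1-t},1\bigr)$, so this equation can be solved for $R(t;v,1)$ as an explicit rational expression in $t$, $v$ and $R\bigl(t;\tfrac{vt}{1-t},1\bigr)$. Substituting the two relations $R(t;v,1)=R(t;v)-\tfrac{t}{1-(1+v)t}$ and $R\bigl(t;\tfrac{vt}{1-t},1\bigr)=R\bigl(t;\tfrac{vt}{1-t}\bigr)-\tfrac{t(1-t)}{1-2t+(1-v)t^2}$ supplied by \eqref{eq211a1} --- the second using the identity $1-\bigl(1+\tfrac{vt}{1-t}\bigr)t=\tfrac{(1-t)^2-vt^2}{1-t}$ --- then converts the equation into a single relation between $R(t;v)$ and $R\bigl(t;\tfrac{vt}{1-t}\bigr)$. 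Clearing denominators and simplifying, while repeatedly using the factorizations of the polynomials $1-(1+v)t$, $1-2t$ and $(1-t)^2-vt^2$ that appear in the coefficients, should rearrange it into the continued-fraction form \eqref{21-1the1}. I expect this last simplification to be the main obstacle: the intermediate expressions are bulky, and recovering the compact shape of \eqref{21-1the1} requires careful cancellation, for which a computer algebra check is natural.

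Finally, for the numerical claim I would set $v=1$ in \eqref{21-1the1}, where the factor $vt^2-(1-v)(1-2t)$ reduces to $t^2$, obtaining the self-referential identity
\[
R(t;1)=\cfrac{t}{(1-2t)-\cfrac{(1-t)^2t^2}{(1-t)(1-2t)-t^3R\!\left(t;\tfrac{t}{1-t}\right)}}.
\]
Under the substitution $v\mapsto\tfrac{vt}{1-t}$, the value $v=1$ passes through $\tfrac{t}{1-t},\tfrac{t^2}{1-2t},\dots$, each of strictly higher order in $t$, so $R\bigl(t;\tfrac{vt}{1-t}\bigr)$ gains at least one extra power of $t$ at each level; hence the continued fraction converges as a formal power series, and truncating it after finitely many levels already determines any prescribed number of coefficients of $R(t;1)$. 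Expanding in this way through order $t^{20}$ --- equivalently, running the recurrences of Lemma~\ref{21-1lem1} together with \eqref{rn(m)form} and \eqref{eq211a1} --- produces the displayed series, which matches the entries for $21\text{-}1$ in Table~\ref{tab2}.
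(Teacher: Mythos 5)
Your proposal is correct and follows essentially the same route as the paper's proof: eliminate $R(t;v,0)$ by setting $u=0$ in \eqref{eq211a3} and substituting back (reproducing \eqref{eq211a4}), specialize to $u=1$, convert to $R(t;v)$ and $R\bigl(t;\tfrac{vt}{1-t}\bigr)$ via \eqref{eq211a1}, and rearrange into the continued-fraction form. Your version of the substitution, $R(t;v,1)=R(t;v)-\tfrac{t}{1-(1+v)t}$, is the correct reading of \eqref{eq211a1} (the paper's proof text drops the factor $t$ in the numerator), and your remark on the formal convergence of the continued fraction properly justifies the coefficient extraction for $R(t;1)$.
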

\begin{proof}
Taking $u=0$ into \eqref{eq211a3}, one finds an expression of $R(t;v,0)$, which when substituted back into \eqref{eq211a3}, gives
\begin{align}\label{eq211a4}
\left(1-\frac{vut}{1-t}\right)R(t;v,u)
&=\frac{vt^3}{(1-(1+v)t)((1-t)^2-vt^2)}+\frac{v^2t^3}{(1-t)^2}R(t;v,1)R\left(t;\frac{vt}{1-t},1\right)\notag\\
&\quad-\frac{vt^2(vt^3-(1-t)^3)}{(1-t)(1-2t)((1-t)^2-vt^2)}R(t;v,1)\notag\\
&\quad-\frac{v(1-v)t^4}{(1-t)(1-2t)(1-(1+v)t)}R\left(t;\frac{vt}{1-t},1\right).
\end{align}
Taking $u=1$ in \eqref{eq211a4}, one gets
\begin{align}\label{eq211a5}
&\left(1-\frac{vt}{1-t}+\frac{vt^2(vt^3-(1-t)^3)}{(1-t)(1-2t)((1-t)^2-vt^2)}-\frac{v^2t^3}{(1-t)^2}R\left(t;\frac{vt}{1-t},1\right)\right)R(t;v,1)\notag\\
&=\frac{vt^3}{(1-(1+v)t)((1-t)^2-vt^2)}-\frac{v(1-v)t^4}{(1-t)(1-2t)(1-(1+v)t)}R\left(t;\frac{vt}{1-t},1\right).
\end{align}
Using $R(t;v,1)=R(t;v)-\frac{1}{1-(1+v)t}$ in \eqref{eq211a5} then yields, after several algebraic steps,
\begin{align*}
R(t;v)&=\frac{vt^4R(t;\frac{vt}{1-t})-t(1-t)(1-2t)}{
\frac{v}{t}(1-2t)(vt^4R(t;\frac{vt}{1-t})-t(1-t)(1-2t))+(1-t)^2(vt^2-(1-v)(1-2t))},
\end{align*}
which implies \eqref{21-1the1}.
\end{proof}


\begin{thebibliography}{99}

\footnotesize{

\bibitem{AS}
L. Alonso and R. Schott, \emph{Random Generation of Trees}, Kluwer, Dordrecht, Holland, 1995.

\bibitem{BCR}
J.-L. Baril, D. Colmenares, J. L. Ram\'{\i}rez, E. D. Silva, L. M. Simbaqueba and D. A. Toquica, Consecutive pattern-avoidance in Catalan words according to the last symbol, \emph{RAIRO Theor. Inform. Appl.} {\bf58} (2024), Art.\,1.

\bibitem{BKV}
J.-L. Baril, C. Khalil and V. Vajnovszki, Catalan words avoiding pairs of length three patterns, \emph{Discrete Math. Theor. Comput. Sci.} \textbf{22} (2021), Art. 5.

\bibitem{BKRV}
J.-L. Baril, S. Kirgizov, J. L. Ram\'{\i}rez and D. Villamizar, The combinatorics of Motzkin polyominoes, preprint arXiv:math/2401.06228[math.CO], 2024.

\bibitem{BKV2}
J.-L. Baril, S. Kirgizov and V. Vajnovszki, Descent distribution on Catalan words avoiding a pattern of length at most three, \emph{Discrete Math.} {\bf 341} (2018), 2608--2615.

\bibitem{BRam}
J.-L. Baril and J. L. Ram\'{\i}rez, Descent distribution on Catalan words avoiding ordered pairs of relations, \emph{Adv. in Appl. Math.} {\bf 149} (2023), 102551.

\bibitem{CMR}
D. Callan, T. Mansour and J. L. Ram\'{\i}rez, Statistics on bargraphs of Catalan words, \emph{J. Automat. Lang. Comb.} {\bf26}:3-4 (2021), 177--196.

\bibitem{FS}
P. Flajolet and R. Sedgewick, \emph{Analytic Combinatorics}, Cambridge University Press, Cambridge, UK, 2009.

\bibitem{HM}
Q. Hou and T. Mansour, Kernel method and linear recurrence system, \textit{J. Comput. Appl. Math.} \textbf{261} (2008), 227--242.

\bibitem{Kh}
C. Khalil, \'{E}tude de statistiques combinatoires et de leur impact en optmisation \'{e}volutionnaire, Ph.D. thesis, U. Bourgogne, Franche-Comt\'{e}, 2021.

\bibitem{Kit}
S. Kitaev, \emph{Patterns in Permutations and Words}, Springer, New York, 2011.

\bibitem{MRT}
T. Mansour, J. L. Ram\'{\i}rez and D. A. Toquica, Counting lattice points on bargraphs of Catalan words, \emph{Math. Comput. Sci.} {\bf15}:4 (2021), 701--713.

\bibitem{MSh1}
T. Mansour and M. Shattuck, Avoidance of type (1,2) patterns by Catalan words, \emph{Turkish J. Analysis Number Theory} \textbf{5}(3) (2017), 101--116.

\bibitem{MSh2}
T. Mansour and M. Shattuck, Enumeration of smooth inversion sequences and proof of a recent related
conjecture, \textit{J. Difference Equa. Appl.} \textbf{29}:3 (2023), 270--296.

\bibitem{MVaj}
T. Mansour and V. Vajnovszki, Efficient generation of restricted growth words, \emph{Inform. Process. Lett.} \textbf{113} (2013), 613--616.

\bibitem{RRO}
J. L. Ram\'{\i}rez and A. Rojas-Osorio, Consecutive patterns in Catalan words and the descent distribution, \emph{Bol. Soc. Mat. Mex.} {\bf 29} (2023), Art.\,60.

\bibitem{Ri}
T. Rivlin, \emph{Chebyshev Polynomials: From Approximation Theory to
Algebra and Number Theory}, John Wiley, New York, 1990.

\bibitem{Sha}
M. Shattuck, Counting subword patterns in Catalan words, \emph{Art Discrete Appl. Math.}, to appear in 2024, available at https://doi.org/10.26493/2590-9770.1695.4da.

\bibitem{Sloane}
N. J. A. Sloane et al., The On-Line Encyclopedia of Integer Sequences, 2020. Available at https://oeis.org.

\bibitem{Stan}
R. P. Stanley, \emph{Catalan Numbers}, Cambridge University Press, Cambridge, UK, 2015.

\bibitem{Vaj}
S. Vajda, \emph{Fibonacci and Lucas Numbers, and the Golden Section: Theory and Applications}, Dover Publications, Mineola, NY, 2008.

\bibitem{Wade}
W. R. Wade, \emph{An Introduction to Analysis}, second edition, Prentice Hall, Upper Saddle River, NJ, 1999.


\bibitem{Wag}
C. G. Wagner, Generalized Stirling and Lah numbers, \emph{Discrete Math.} {\bf 160} (1996), 199--218.}

\end{thebibliography}
\end{document}